\DeclareSymbolFont{cyrletters}{OT2}{wncyr}{m}{n}
\numberwithin{equation}{section} \numberwithin{figure}{section}
\let\Im\relax
\DeclareMathOperator{\Im}{Im}
\DeclareSymbolFont{cyrletters}{OT2}{wncyr}{m}{n}
\DeclareMathSymbol{\Sha}{\mathalpha}{cyrletters}{"58}
\DeclareMathSymbol{\Be}{\mathalpha}{cyrletters}{"42}
\renewcommand\P{\mathbb{P}}
\newcommand\Z{\mathbb{Z}}
\newcommand\N{\mathbb{N}}
\newcommand\Q{\mathbb{Q}}
\newcommand\R{\mathbb{R}}
\newcommand\C{\mathbb{C}}
\newcommand{\m}{\mathfrak{m}}
\renewcommand{\b}{\mathbf}
\renewcommand{\leq}{\leqslant}
\renewcommand{\geq}{\geqslant}
\renewcommand{\#}{\sharp}
\newcommand{\p}{\mathfrak{p}}
\newtheorem{lemma}{Lemma}
\newtheorem{theorem}[lemma]{Theorem}
\newtheorem{proposition}[lemma]{Proposition}
\newtheorem{corollary}[lemma]{Corollary}
\theoremstyle{definition}
\newtheorem{remark}[lemma]{Remark}
\numberwithin{lemma}{section}
\title{An improved large sieve for quadratic characters via Hooley neutralisers and its applications}
\author{Cameron Wilson} 
\address{Department of Mathematics \\
University of Glasgow \\ G12~8QQ United Kingdom}
\email{c.wilson.6@research.gla.ac.uk}
\subjclass[2020] {
11N36; %Application of Sieve Methods
11A25, %Arithmetic Functions
11L40} % Estimates on Character Sums
\date{\today}
\begin{document}

\begin{abstract}
    We combine Hooley neutralisers and the large sieve for quadratic characters. We give applications to character sums with a hyperbolic height condition.
\end{abstract}

\maketitle

\setcounter{tocdepth}{1}
\tableofcontents

\section{Introduction}

\subsection{An Improved Large Sieve For Quadratic Characters}
The large sieve for quadratic characters has played a vital role in many areas of mathematics in recent decades. It has seen applications in the theory of $L$-functions \cite{SoundLfunctions}, arithmetic statistics \cite{FK,Heath-BrownSelmerRank}, Manin's conjecture \cite{BH-BQuadricBundle}, and local solubility problems \cite{F--I,KPSS,LRS}. The following bounds, or variations thereof, are the most common versions used in these applications. Suppose $a_n$ and $b_m$ are arbitrary complex sequences bounded in magnitude by $1$ and supported on the odd integers. Then for $N,M\geq 2$, 
\begin{itemize}
    \item Elliott \cite{Elliott,Heath-Brown}: \begin{equation}\label{ElliottIntro}\mathop{\sum\sum}_{n\leq N,\;m\leq M}a_nb_m\left(\frac{n}{m}\right)\ll NM\left(N^{-1/2}+N^{1/2}M^{-1/2}\log N\right);\end{equation}
    \item Heath-Brown \cite{Heath-Brown}: for any $\epsilon>0$, \begin{equation}\label{IntroHBbound}\mathop{\sum\sum}_{n\leq N,\;m\leq M}\mu^2(2n)\mu^2(2m)a_nb_m\left(\frac{n}{m}\right)\ll_{\epsilon} (NM)^{1+\epsilon}\left(N^{-1/2}+M^{-1/2}\right);\end{equation}
    \item Friedlander--Iwaniec \cite{F--I}: \begin{equation}\label{IntroF--I}\mathop{\sum\sum}_{n\leq N,\;m\leq M}a_nb_m\left(\frac{n}{m}\right) \ll NM\left(N^{-1/6}+M^{-1/6}\right)\left(\log 3NM\right)^{7/6}.\end{equation}
\end{itemize}
When $N^2\ll M$ the Elliott bound is the most effective bound; when $N$ and $M$ are of comparable size the Heath-Brown result is more effective. In applications to rational points problems the Friedlander--Iwaniec bound is more versatile as the presence of $(NM)^{\epsilon}$ in the Heath-Brown bound may lead to problems when we require no loss of logarithms. One may also combine the Elliott and Heath-Brown results into one as in the work of Fouvry and Kl\"uners on the $4$-rank of class groups \cite[Lemma 15]{FK}. The primary purpose of the present paper is to present an improved version of Elliott's bound whenever the sequence $b_m$ has some multiplicative structure:

\begin{theorem}\label{Hooleyandthelargesieve1}
    Let $M,N\geq 2$, and fix some $\epsilon>0$. Let $f$ be any multiplicative function such that $0\leq f(p)\leq 1$ and $f(p^m)\leq f(p)$ for all primes $p$ and all $m\geq 2$. Suppose also that there exists an $0<\alpha\leq 1$ such that for all $X\geq 2$ we have,
    \begin{equation}\label{Mertenassumption}
    \sum_{p\leq X}\frac{f(p)}{p} = \alpha\log\log X + O(1).
    \end{equation}
    Then for any complex sequences $a_n$,$b_m$ which are supported on the odd integers such that $\lvert a_n\rvert\leq 1$ and $\lvert b_m\rvert\leq 1$ we have:
    \[
    \mathop{\sum\sum}_{n\leq N,\;m\leq M} a_n b_m f(m)\left(\frac{n}{m}\right) \ll_{\epsilon} \frac{MN^{1/2}(\log N)}{(\log M)^{(1-\alpha)}} + \frac{M^{1/2+\epsilon}N^{3/2}(\log N)^{1/2}}{(\log M)^{(1-\alpha)/2}}
    \]
    where the implied constant depends at most on $\epsilon$.
\end{theorem}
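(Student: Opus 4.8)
To prove Theorem~\ref{Hooleyandthelargesieve1} we would first isolate a \emph{weighted} large sieve inequality for quadratic characters, and then insert the weight $f$ into the sieve by means of a Hooley neutraliser. Since $|b_m|\le1$ and $f(p^m)\le f(p)$ for $m\ge2$, one has $|b_m f(m)|\le\prod_{p\mid m}f(p)$, so after replacing $b_m$ by a suitable sequence bounded by $1$ we may assume $f(p^k)=f(p)$ for all $k$, i.e. $f(m)=\prod_{p\mid m}f(p)$. Writing $A_m=\sum_{n\le N,\ n\ \mathrm{odd}}a_n\bigl(\tfrac nm\bigr)$ and grouping by $m$, Cauchy--Schwarz gives
\[
\Bigl|\sum_{n\le N}\sum_{m\le M}a_n b_m f(m)\Bigl(\tfrac nm\Bigr)\Bigr|\le\Bigl(\sum_{\substack{m\le M\\ m\ \mathrm{odd}}}f(m)\Bigr)^{1/2}\Bigl(\sum_{\substack{m\le M\\ m\ \mathrm{odd}}}f(m)\,|A_m|^2\Bigr)^{1/2},
\]
and by a Hall--Tenenbaum-type upper bound for mean values of non-negative multiplicative functions, \eqref{Mertenassumption} yields $\sum_{m\le M}f(m)\ll M(\log M)^{\alpha-1}$. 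Hence Theorem~\ref{Hooleyandthelargesieve1} follows from the weighted large sieve estimate
\[
W(M):=\sum_{\substack{m\le M\\ m\ \mathrm{odd}}}f(m)\,|A_m|^2\ \ll_\epsilon\ \frac{MN(\log N)^2}{(\log M)^{1-\alpha}}+M^{\epsilon}N^{3}\log N,
\]
whose two terms, after the displayed Cauchy--Schwarz, reproduce the two terms of the theorem.

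To prove this estimate one opens the square: $W(M)=\sum_{n_1,n_2\le N}a_{n_1}\overline{a_{n_2}}\,S(n_1n_2)$, where $S(k)=\sum_{m\le M,\ m\ \mathrm{odd}}f(m)\bigl(\tfrac km\bigr)$. When $n_1n_2$ is a perfect square, $S(n_1n_2)=\sum_{m\le M,\ m\ \mathrm{odd},\ (m,n_1n_2)=1}f(m)$, so the diagonal contributes at most $\#\{(n_1,n_2):n_1n_2=\square,\ n_i\le N\}\cdot\sup_k\sum_{m\le M,\,(m,k)=1}f(m)\ll N(\log N)\cdot M(\log M)^{\alpha-1}$, which feeds the first term. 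For the off-diagonal, where $k=n_1n_2$ is not a square and $m\mapsto\bigl(\tfrac km\bigr)$ is a genuine quadratic character, one brings in the Hooley neutraliser: for $m>1$ use $1=\omega(m)^{-1}\sum_{p\mid m}1$, so that $S(k)=1+\sum_p\sum_{m\le M,\ p\mid m,\ m\ \mathrm{odd}}\omega(m)^{-1}f(m)\bigl(\tfrac km\bigr)$. Against the weight $f$, the additive function $\omega(m)$ concentrates around $\sum_{p\le M}f(p)/p=\alpha\log\log M+O(1)$, and one \emph{neutralises} it by replacing $\omega(m)^{-1}$ with the constant $(\alpha\log\log M)^{-1}$; this detaches one prime $p\mid m$.

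Writing $m=p\ell$, using $\bigl(\tfrac k{p\ell}\bigr)=\bigl(\tfrac kp\bigr)\bigl(\tfrac k\ell\bigr)$ and $f(p\ell)=f(p)f(\ell)$ up to negligible contributions, the $\ell$-sum is again of the shape $W(\cdot)$ at length $M/p$; and since $(\alpha\log\log M)^{-1}\sum_{p\le M}f(p)/p=1+O(1/\log\log M)$, summing the detached prime against \eqref{Mertenassumption} incurs no net loss of length. One is thereby led to a recursion of the form
\[
W(M)\ \ll\ \frac{1}{\alpha\log\log M}\sum_{p\le M}f(p)\,W\!\left(\tfrac Mp\right)+\frac{MN(\log N)^2}{(\log M)^{1-\alpha}}+\mathcal E.
\]
Iterating until the length has dropped to size about $N$, at which point the large sieve for quadratic characters \eqref{IntroHBbound} is applied directly, and noting that the induced series over the detached primes converges thanks to \eqref{Mertenassumption}, one recovers the first term of the bound for $W(M)$; this is precisely where the large sieve for quadratic characters enters the argument.

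The main difficulty is the control of the neutralisation error $\mathcal E$. Replacing $\omega(m)^{-1}$ by a constant has to be done so as to survive the oscillation of the quadratic character sum; the natural route --- Cauchy--Schwarz combined with a Tur\'an--Kubilius second-moment bound $\sum_{m\le M}f(m)\bigl(\omega(m)-\alpha\log\log M\bigr)^2\ll(\log\log M)\sum_{m\le M}f(m)$ adapted to the measure $f(m)\,dm$ (which itself must be extracted from \eqref{Mertenassumption} alone) --- forces an application of \eqref{IntroHBbound} through an additional Cauchy--Schwarz, and this is responsible for the term $M^{\epsilon}N^{3}\log N$ in place of the $N^{2}$ of \eqref{IntroHBbound}. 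A second delicate point is the bookkeeping of the powers of $\log M$: because \eqref{Mertenassumption} is only an $O(1)$-estimate and not an asymptotic with a power saving, Selberg--Delange asymptotics are unavailable, and the exponents $1-\alpha$ and $(1-\alpha)/2$ emerge only after carefully summing the geometric series generated by the repeated neutralisation and combining it with the mean value bound $\sum_{m\le M}f(m)\ll M(\log M)^{\alpha-1}$.
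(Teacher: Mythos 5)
Your opening coincides with the paper's: the Cauchy--Schwarz step, the bound $\sum_{m\le M}f(m)\ll M(\log M)^{\alpha-1}$ from Shiu, the reduction of the theorem to the mean-square estimate $W(M)\ll MN(\log N)^{2}(\log M)^{\alpha-1}+M^{\epsilon}N^{3}\log N$, and the treatment of the diagonal $n_1n_2=\square$ are all fine. The gap is in the off-diagonal, i.e. exactly at the point where the multiplicative weight has to be made compatible with the oscillation of $\left(\frac{n_1n_2}{m}\right)$. The paper's device is the majorant of Lemma \ref{neutralisers}: $f(m)\le\sum_{d\mid m,\,d\mid P(z)}\lambda_d^{+}\hat f(d)$ with Brun upper-bound weights supported on $d\le y=M^{\epsilon}$ and $\hat f(d)=\prod_{p\mid d}(1-f(p))$. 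This turns the $m$-sum into unweighted sums over the progressions $d\mid m$, so the P\'olya--Vinogradov inequality applies to each non-square $n_1n_2$ and gives the term $yN^{3}\log N=M^{\epsilon}N^{3}\log N$, while on the diagonal the fundamental lemma (Lemma \ref{sieveapplication}), applied to $\hat f$, recovers the factor $(\log M)^{-(1-\alpha)}$. Your replacement device --- writing $1=\omega(m)^{-1}\sum_{p\mid m}1$, substituting $(\alpha\log\log M)^{-1}$ for $\omega(m)^{-1}$, detaching a prime, iterating, and finishing with \eqref{IntroHBbound} --- does not supply a substitute for this structure. The weights that arise, such as $f(m)\bigl(\omega(m)^{-1}-(\alpha\log\log M)^{-1}\bigr)$ or $f$ restricted after detaching primes, are not constant on arithmetic progressions, so P\'olya--Vinogradov is unavailable; and any appeal to \eqref{IntroHBbound} for a bilinear sum of lengths $M'\times N^{2}$ (with $M'$ comparable to $M$) produces a term of size about $(M'N^{2})^{1+\epsilon}N^{-1}\approx M^{1+\epsilon}N^{1+2\epsilon}$, which after your outer Cauchy--Schwarz gives roughly $M^{1+\epsilon/2}N^{1/2}(\log M)^{-(1-\alpha)/2}$. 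That exceeds the theorem's first term by a power $M^{\epsilon/2}$ and so destroys precisely the $(\log M)^{-(1-\alpha)}$ saving the theorem is designed to keep; your assertion that the neutralisation error is $\ll M^{\epsilon}N^{3}\log N$ is therefore not only unproved but, as described, implausible.

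Two further points of the plan are asserted rather than established. First, Tur\'an--Kubilius concentration of $\omega(m)$ under the measure $f(m)$ only yields savings of $\log\log$-quality (and only polynomial decay in the truncation parameter), whereas the exponents $1-\alpha$ and $(1-\alpha)/2$ require a genuine power of $\log M$; the paper obtains this from the sieve product in Lemma \ref{sieveapplication}, not from concentration. Second, the branching recursion $W(M)\ll(\alpha\log\log M)^{-1}\sum_{p}f(p)W(M/p)+\dots$ requires uniform control of the accumulated factors along all branches, a stopping rule when the length reaches about $N$, and the re-extraction of a new diagonal at every level (that is where the main term accumulates); none of this bookkeeping is carried out, and even granting it, the application of \eqref{IntroHBbound} at the bottom introduces $(NM')^{\epsilon}$ losses on the dominant term that the stated bound does not tolerate. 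In short: same first step as the paper, correct diagonal, but the core mechanism for inserting the multiplicative weight into the large sieve is missing, and the proposed substitute would not deliver the stated estimate.
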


This first term in this result dominates when $\frac{N^2}{\log N}\leq \frac{M^{1-\epsilon}}{(\log M)^{1-\alpha}}$ for any $\epsilon>0$. In this range, Theorem \ref{Hooleyandthelargesieve1} improves upon all of the bounds \eqref{ElliottIntro}-\eqref{IntroF--I}. The main benefit here is that we have maintained savings from summing over the multiplicative function. Indeed, it follows from a result of Shiu \cite[Theorem 1]{Shiu} that for a multiplicative function satisfying the conditions of Theorem \ref{Hooleyandthelargesieve1} that we have 
\begin{equation}\label{ShiuIntro}
\sum_{m\leq M} f(m) \ll \frac{M}{(\log M)^{1-\alpha}}.
\end{equation}
We will prove this in \S\ref{HooleyLargeSieve} by using Hooley Neutralisers to insert the Brun Sieve into standard large sieve methods. Using partial summation, we obtain a further improvement when $a_n$ contains a harmonic factor:
\begin{corollary}\label{Hooleyandthelargesieve2}
    Let $M\geq N\geq W\geq 2$, and fix some $\epsilon>0$. Let $f$ be any multiplicative function such that $0\leq f(p)\leq 1$ and $f(p^m)\leq f(p)$ for all primes $p$ and all $m\geq 2$. Suppose also that for all $X\geq 2$ we have,
    \begin{equation}
    \sum_{p\leq X}\frac{f(p)}{p} = \alpha\log\log X + O(1)
    \end{equation}
    for some $0<\alpha<1$. Then for any complex sequences $a_n$,$b_m$ which are supported on the odd integers such that $\lvert a_n\rvert\leq 1$ and $\lvert b_m\rvert\leq1$ we have:
    \[
    \mathop{\sum\sum}_{W<n\leq N,\;m\leq M} \frac{a_n}{n} b_mf(m)\left(\frac{n}{m}\right) \ll_{\epsilon} \frac{M(\log N)}{W^{1/2}(\log M)^{(1-\alpha)}} + \frac{M^{1/2+\epsilon}N^{1/2}(\log N)^{1/2}}{(\log M)^{(1-\alpha)/2}},
    \]
    where the implied constant depends at most on $\epsilon$.
\end{corollary}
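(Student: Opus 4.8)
The plan is to deduce Corollary~\ref{Hooleyandthelargesieve2} from Theorem~\ref{Hooleyandthelargesieve1} by a dyadic decomposition in the variable $n$ combined with partial summation to extract the harmonic factor $1/n$. First I would split the range $W<n\le N$ into $O(\log N)$ dyadic intervals $(P,2P]$ with $P$ running over powers of $2$ between $W$ and $N$. On each such interval the factor $1/n$ is of size $\asymp 1/P$, so I would write
\[
\sum_{P<n\le 2P}\frac{a_n}{n}b_mf(m)\left(\frac{n}{m}\right)
= \int_P^{2P}\left(\sum_{P<n\le t}a_nb_mf(m)\left(\frac{n}{m}\right)\right)\frac{dt}{t^2} + \frac{1}{2P}\sum_{P<n\le 2P}a_nb_mf(m)\left(\frac{n}{m}\right),
\]
so that after summing over $m\le M$ the double sum on the dyadic block is controlled by $P^{-1}$ times a supremum over $t\in(P,2P]$ of the double sums appearing in Theorem~\ref{Hooleyandthelargesieve1}, applied with the sequence $a_n$ truncated to $(P,t]$ (which is again bounded by $1$ and supported on odd integers) and with $N$ replaced by $2P$.

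Next I would apply Theorem~\ref{Hooleyandthelargesieve1} to each dyadic block. This gives, for each $P$,
\[
\mathop{\sum\sum}_{P<n\le 2P,\;m\le M}\frac{a_n}{n}b_mf(m)\left(\frac{n}{m}\right)
\ll_\epsilon \frac{1}{P}\left(\frac{MP^{1/2}\log(2P)}{(\log M)^{1-\alpha}} + \frac{M^{1/2+\epsilon}P^{3/2}(\log 2P)^{1/2}}{(\log M)^{(1-\alpha)/2}}\right)
= \frac{M\log(2P)}{P^{1/2}(\log M)^{1-\alpha}} + \frac{M^{1/2+\epsilon}P^{1/2}(\log 2P)^{1/2}}{(\log M)^{(1-\alpha)/2}}.
\]
Here I am using $\log(2P)\ll \log N$ since $P\le N$. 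Summing over the $O(\log N)$ values $P=2^j$ with $2^j>W$: the first term is a geometric series in $P^{-1/2}$ dominated by its smallest index $P\asymp W$, yielding $\ll M(\log N)/(W^{1/2}(\log M)^{1-\alpha})$; the second term is a geometric series in $P^{1/2}$ dominated by its largest index $P\asymp N$, yielding $\ll M^{1/2+\epsilon}N^{1/2}(\log N)^{1/2}/(\log M)^{(1-\alpha)/2}$ (the extra $\log N$ from the number of blocks is absorbed into the geometric decay, or, if one prefers to be cruder, into a harmless adjustment of $\epsilon$). Adding these two bounds gives exactly the claimed estimate.

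The one point requiring a little care — and the main obstacle, such as it is — is the bookkeeping of logarithmic factors when summing the geometric series, and in particular making sure that the $O(\log N)$ dyadic blocks do not cost an extra logarithm beyond what is stated. For the first term this is automatic because $\sum_{j:\,2^j>W}2^{-j/2}\asymp W^{-1/2}$, a convergent geometric series whose length is irrelevant. For the second term one similarly has $\sum_{j:\,2^j\le N}2^{j/2}(\log 2^{j+1})^{1/2}\ll N^{1/2}(\log N)^{1/2}$ since the geometric growth dominates the slowly-varying logarithmic weight; alternatively the tiny overhead is swallowed by replacing $\epsilon$ with $\epsilon/2$ in the application of Theorem~\ref{Hooleyandthelargesieve1}, which is why the statement is phrased with an arbitrary $\epsilon>0$. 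One should also check the edge case where $W<n\le N$ contains no dyadic interval of the stated form, i.e.\ $N<2W$, but then the sum has length $O(W)$ and the bound is trivial (or one simply notes $P=W$ is an admissible choice). No genuinely new analytic input is needed beyond Theorem~\ref{Hooleyandthelargesieve1}; the corollary is a standard partial-summation consequence.
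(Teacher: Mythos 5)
Your proposal is correct and is essentially the paper's argument: the paper performs a single Abel summation in $n$ over the whole range $(W,N]$, writing the sum as boundary terms plus $\int_W^N S_f(u,M)u^{-2}\,\mathrm{d}u$ and inserting the bound of Theorem \ref{Hooleyandthelargesieve1}, while you carry out the same partial summation dyadic block by block and then sum the resulting geometric series. Your bookkeeping (bounding $\log 2P\ll\log N$ uniformly, geometric domination in $P^{\pm 1/2}$) yields exactly the two stated terms, so there is no gap.
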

The benefit of this result is that it encodes not only the saving from the multiplicative function, but also the convergence from the sum $\sum_{W<n\leq N}\frac{1}{n}\left(\frac{n}{m}\right)$ when $m$ is non-square. Again, this result is most effective when $N^2\ll M$. The utility of this bound in applications is found in applying it for such hyper-skewed regions and applying the Heath-Brown bound \eqref{IntroHBbound} or the Friedlander--Iwaniec bound \eqref{IntroF--I} in regions where $M$ or $N$ are more comparable. To demonstrate this, we provide the following two corollaries. The first is on the simultaneous average $\frac{1}{\tau(m)}$ and special values of $L$-functions, $L\left(1,\left(\frac{\cdot}{m}\right)\right)$.
\begin{corollary}\label{LfunctionweirdaverageIntro}
    For all $X\geq 3$ we have
    \[
    \mathop{\sum}_{\substack{1<m\leq X}}\frac{\mu^2(2m)}{\tau(m)}L\left(1,\left(\frac{\cdot}{m}\right)\right) \ll \frac{X}{\sqrt{\log X}}.
    \]
\end{corollary}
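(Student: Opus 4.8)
The plan is to expand $L\!\left(1,\left(\frac{\cdot}{m}\right)\right)$ into a Dirichlet series, reduce it to a sum over odd integers, truncate, and feed the resulting bilinear form into Corollary~\ref{Hooleyandthelargesieve2} in a hyper-skewed range, treating the very short variable by hand. We may assume $X$ is large. Write $\chi_m=\left(\frac{\cdot}{m}\right)$; for $m>1$ odd and squarefree this is a non-principal real character to a divisor of $m$. Splitting off the Euler factor at $2$ gives $L(1,\chi_m)=(1-\tfrac12\chi_m(2))^{-1}\sum_{\ell\text{ odd}}\chi_m(\ell)/\ell$, and since $\chi_m(2)=\pm1$ one has the tidy identity $(1-\tfrac12\chi_m(2))^{-1}=\tfrac43+\tfrac23\chi_m(2)$. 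By Pólya--Vinogradov and partial summation, $\sum_{\ell\text{ odd}}\chi_m(\ell)/\ell=\sum_{\ell\le Y,\ \ell\text{ odd}}\chi_m(\ell)/\ell+O(m^{1/2}\log m/Y)$, so the choice $Y=X^{1/2}(\log X)^3$ produces a total truncation error $\ll X^{3/2}\log X/Y\ll X/(\log X)^2$. Using $\chi_m(2)\chi_m(\ell)=\chi_m(2\ell)$ we arrive at
\[
\sum_{1<m\le X}\frac{\mu^2(2m)}{\tau(m)}L(1,\chi_m)=\tfrac43\,T_1+\tfrac23\,T_2+O\!\left(\frac{X}{(\log X)^2}\right),\qquad T_j:=\sum_{\substack{\ell\le Y\\ \ell\text{ odd}}}\frac1\ell\sum_{1<m\le X}\frac{\mu^2(2m)}{\tau(m)}\left(\frac{r_j\ell}{m}\right),
\]
where $r_1=1$ and $r_2=2$.

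Next I would bound $T_1$; $T_2$ is entirely analogous. Set $W:=(\log X)^3$ and split $T_1$ according to $\ell=1$, $3\le\ell\le W$, and $W<\ell\le Y$ (with $\ell$ odd throughout). The function $f(m):=\mu^2(2m)/\tau(m)$ is multiplicative with $f(2)=0$, $f(p)=\tfrac12$ for odd $p$, $f(p^k)=0\le f(p)$ for $k\ge2$, and $\sum_{p\le X}f(p)/p=\tfrac12\log\log X+O(1)$, so it meets the hypotheses of Corollary~\ref{Hooleyandthelargesieve2} with $\alpha=\tfrac12$, and Shiu's bound~\eqref{ShiuIntro} gives $\sum_{m\le X}f(m)\ll X/\sqrt{\log X}$. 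The $\ell=1$ term of $T_1$ is exactly $\sum_{1<m\le X}f(m)\ll X/\sqrt{\log X}$. For $W<\ell\le Y$, Corollary~\ref{Hooleyandthelargesieve2} with $N=Y$, $M=X$, $\alpha=\tfrac12$, $a_\ell=\mathbf 1[\ell\text{ odd}]$, $b_m=\mathbf 1[m\text{ odd}]$ and $f$ as above bounds the contribution by $\ll \frac{X\log Y}{W^{1/2}\sqrt{\log X}}+\frac{X^{1/2+\epsilon}Y^{1/2}(\log Y)^{1/2}}{(\log X)^{1/4}}\ll \frac{X}{\log X}+X^{3/4+\epsilon}(\log X)^{7/4}$, which is $\ll X/\sqrt{\log X}$ for our $W$ and $Y$ (fix $\epsilon=\tfrac18$). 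Among $3\le\ell\le W$, the odd squares $\ell=j^2$ ($j\ge3$) contribute at most $\big(\sum_{j\ge3}j^{-2}\big)\sum_{1<m\le X}f(m)\ll X/\sqrt{\log X}$, after dropping the coprimality condition $(m,j)=1$ coming from $\left(\frac{j^2}{m}\right)=\mathbf 1[(m,j)=1]$.

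It remains to handle $\sum_{\ell}\ell^{-1}S(\ell)$ over odd non-square $3\le\ell\le W$, where $S(\ell):=\sum_{1<m\le X}f(m)\psi_\ell(m)$ and $\psi_\ell=\left(\frac{\ell}{\cdot}\right)$ is a non-principal real character of modulus dividing $4\ell$ (non-principal exactly because $\ell$ is not a square). Its Dirichlet series $\prod_{p\ne2}(1+\tfrac12\psi_\ell(p)p^{-s})$ factors as $L(s,\psi_\ell)^{1/2}E_\ell(s)$ with $E_\ell$ holomorphic and uniformly bounded for $\re(s)\ge\tfrac34$. By the Landau--Page theorem at most one real primitive character $\chi^\ast$ of conductor $\le 4W$ is exceptional (has a Siegel zero); for every $\ell$ whose $\psi_\ell$ is \emph{not} induced by $\chi^\ast$, the classical zero-free region for $L(s,\psi_\ell)$, the factorisation above, and a routine Perron-plus-contour-shift argument (the square-root branch point of $L^{1/2}$ being harmless) yield a Siegel--Walfisz-type bound $S(\ell)\ll X\exp(-c\sqrt{\log X})$, uniformly for conductor $\le(\log X)^{O(1)}$; since $\sum_{\ell\le W}\ell^{-1}\ll\log\log X$, these $\ell$ contribute $\ll X\exp(-\tfrac c2\sqrt{\log X})$ altogether. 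The $\ell\le W$ for which $\psi_\ell$ \emph{is} induced by $\chi^\ast$ all share the same squarefree part $\ell_0\ge3$ (pinned down by the conductor of $\chi^\ast$), hence run over $\ell=\ell_0 j^2$ and satisfy $\sum\ell^{-1}\le\ell_0^{-1}\sum_{j\ge1} j^{-2}=O(1)$; for these the trivial bound $|S(\ell)|\le\sum_{1<m\le X}f(m)\ll X/\sqrt{\log X}$ suffices. Running the same argument for $T_2$---where $\left(\frac{2\ell}{\cdot}\right)$ is always non-principal, so there is no square-$\ell$ main term---yields the asserted $\ll X/\sqrt{\log X}$.

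The crux is the short range $\ell\le W$: Corollary~\ref{Hooleyandthelargesieve2} unavoidably loses a factor $\log Y\asymp\log X$ in its first term, forcing $W$ as large as $(\log X)^3$, so one really does need cancellation in the character sums $\sum_{m\le X}\mu^2(2m)\left(\frac{\ell}{m}\right)/\tau(m)$ for moduli up to a fixed power of $\log X$. What keeps this elementary is the observation that the (at most one) possible Landau--Page exceptional character is ``hit'' only by $\ell$'s of bounded harmonic sum, so the trivial bound is enough there and no appeal to Siegel's ineffective lower bound on exceptional zeros is needed; everything else is the classical zero-free region together with Shiu's estimate~\eqref{ShiuIntro}.
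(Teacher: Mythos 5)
Your argument is sound, but it takes a genuinely different route from the paper's. The paper deduces the corollary from Lemma \ref{Lfunctionweirdaverage}: the $L$-series is truncated at $X^2$ by P\'olya--Vinogradov, partial summation is performed in the expanded variable, and the resulting bilinear sums $S(X,t)$ are bounded by \eqref{IntroF--I} in the balanced range $t\geq X^{1/2}$ and by the neutralised large sieve (Theorem \ref{Hooleyandthelargesieve1}/Corollary \ref{Hooleyandthelargesieve2}, in effect with the $1/\tau$-weighted long variable $m\leq X$ serving as the modulus) when $t\leq X^{1/2}$; since the modulus-side saving $(\log X)^{-1/2}$ there is uniform in the length of the numerator variable, the integral converges all the way down to $t=2$ and no zero-free-region or Siegel--Walfisz input is needed anywhere. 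You instead truncate at $Y=X^{1/2}(\log X)^3$, swap the sums, and apply Corollary \ref{Hooleyandthelargesieve2} only for $\ell>(\log X)^3$; the lower cutoff $W$ built into that corollary then forces a separate treatment of the short moduli $3\leq\ell\leq(\log X)^3$, for which you supply a Siegel--Walfisz-type bound for $\sum_{m\leq X}\mu^2(2m)\psi_\ell(m)/\tau(m)$ via the factorisation $L(s,\psi_\ell)^{1/2}E_\ell(s)$, the classical zero-free region and Landau--Page, with square $\ell$ and the at most one exceptional kernel $\ell_0$ handled by the trivial bound. This all checks out: the Euler-factor-at-$2$ identity, the truncation error, the application of Corollary \ref{Hooleyandthelargesieve2} with $\alpha=1/2$, and the Landau--Page observation (which incidentally makes your bound effective, since Siegel's theorem is never invoked) are correct, and the small-conductor estimate you assert is standard --- it is essentially Lemma \ref{Siegel--Walfisz1} (from \cite{LRS,F--I}) adapted to carry the extra $\mu^2$ factor --- though you only sketch its Perron/contour-shift proof rather than give it. In short, the paper's arrangement of the hyperbola and partial summation buys brevity and the complete absence of $L$-function zero input; your arrangement costs an extra analytic ingredient in the short-modulus range but yields an effective constant and shows that range can be handled by purely classical means.
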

This will be proven as a direct consequence of the more general Lemma \ref{Lfunctionweirdaverage}, whose generality is required for applications to character sums later in the paper. Since the average of $\frac{1}{\tau(m)}$ is $\frac{c_1}{\sqrt{\log M}}$ for some constant $c_1$ and the average of $L\left(1,\left(\frac{\cdot}{m}\right)\right)$ is a constant, this bound suggests that the distribution of these two functions are independent. The second application is of a similar nature.
\begin{corollary}\label{AveragingovermediumconductorsIntro}
    Suppose $a_n$,$b_m$ are any complex sequences supported on odd integers such that $\lvert a_n\rvert\leq 1$ and $\lvert b_m\rvert\leq 1$. Then for any $X\geq 3$, $C_1,C_2>1$ such that $(C_1\log\log X)^{C_2}>2$, and any fixed integers $1\leq c_0,c_1\leq (\log X)^{C_1/32}$ we have
    \[
    \sum_{\substack{(\log X)^{3C_1/4}<n_0c_0\leq X^{1/2}}}\frac{1}{n_0^2c_0^2\tau(n_0)}\left\lvert\mathop{\sum\sum}_{\substack{(C_1\log \log X)^{C_2}< m \leq (\log X)^{2C_1}\\ n_1c_1\leq n_0c_0}}\frac{a_m b_{n_1}}{m\tau(n_1)}\left(\frac{n_1}{m}\right)\right\rvert \ll_{C_1,C_2} \frac{1}{c_0c_1(\log \log X)^{C_3}}
    \]
    where $C_3 = C_2/2-1$ and the implied constant depends at most on $C_1$ and $C_2$.
\end{corollary}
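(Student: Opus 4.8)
The plan is to fix $n_0$, set $N_1:=n_0c_0/c_1$, bound the inner double sum
\[
S(n_0):=\sum_{M_0<m\le(\log X)^{2C_1}}\ \sum_{n_1c_1\le n_0c_0}\frac{a_mb_{n_1}}{m\,\tau(n_1)}\Bigl(\tfrac{n_1}{m}\Bigr),
\]
and then sum the resulting bound against $(n_0^2c_0^2\tau(n_0))^{-1}$ over $(\log X)^{3C_1/4}<n_0c_0\le X^{1/2}$, where $M_0:=(C_1\log\log X)^{C_2}$. The crucial step is to split $S(n_0)=S_\square(n_0)+S_{\ne\square}(n_0)$ according to whether the odd number $m$ is a perfect square: $S_\square$ produces the main term, while $S_{\ne\square}$ — where $(\tfrac{\cdot}{m})$ is a non-principal real character — is smaller by a power of $\log X$ and is handled by the large sieve, the precise input depending on how skewed the box $\{m\le(\log X)^{2C_1},\,n_1\le N_1\}$ is. Because $a,b$ are supported on odd integers, quadratic reciprocity lets us interchange the roles of $m$ and $n_1$ in $(\tfrac{n_1}{m})$ at the cost of splitting into $O(1)$ residue classes mod $4$, which is harmless; the hypotheses $(C_1\log\log X)^{C_2}>2$ and $c_0,c_1\le(\log X)^{C_1/32}$ guarantee $M_0>2$, that $(M_0,(\log X)^{2C_1}]$ contains odd squares, and that $\log N_1\gg_{C_1}\log\log X$ (since $n_0c_0>(\log X)^{3C_1/4}$), which is what keeps the estimates uniform in $c_0,c_1$. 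We may assume $\log X$ exceeds a constant depending on $C_1,C_2$, else some ranges below are empty.

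The main term comes from $S_\square$. When $m=k^2$ is an odd square, $(\tfrac{n_1}{m})=\mathbf 1_{(n_1,m)=1}$, so using the classical bound $\sum_{n\le Y}\tau(n)^{-1}\ll Y(\log Y)^{-1/2}$ (a special case of \eqref{ShiuIntro}) and $\sum_{M_0<k^2\le(\log X)^{2C_1}}k^{-2}\ll M_0^{-1/2}$ we obtain $|S_\square(n_0)|\ll M_0^{-1/2}N_1(\log N_1)^{-1/2}$. With $\log N_1\gg_{C_1}\log\log X$ and Abel summation on $\sum_{n\le Y}\tau(n)^{-1}$,
\[
\sum_{(\log X)^{3C_1/4}<n_0c_0\le X^{1/2}}\frac{|S_\square(n_0)|}{n_0^2c_0^2\tau(n_0)}\ \ll\ \frac{1}{c_0c_1\,M_0^{1/2}}\sum_{n_0}\frac{1}{n_0\tau(n_0)(\log n_0)^{1/2}}\ \asymp_{C_1}\ \frac{\log\log X}{c_0c_1\,M_0^{1/2}},
\]
and since $M_0^{1/2}\asymp_{C_1}(\log\log X)^{C_2/2}$ this is $\ll_{C_1,C_2}(c_0c_1)^{-1}(\log\log X)^{1-C_2/2}=(c_0c_1)^{-1}(\log\log X)^{-C_3}$, exactly the claimed bound. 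Thus the exponent $-C_3=1-\tfrac{C_2}{2}$ is produced as the product of the square-density factor $M_0^{-1/2}\asymp_{C_1}(\log\log X)^{-C_2/2}$ and the harmonic sum $\sum_{n_0}n_0^{-1}\tau(n_0)^{-1}(\log n_0)^{-1/2}\asymp\log\log X$; pinning this down is the first delicate point.

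For $S_{\ne\square}$, decompose the $n_0$-sum dyadically (equivalently decompose $N_1$). If $N_1\ge(\log X)^{4C_1}$, apply the large-sieve bound for sums over \emph{non-square} $m$ — i.e.\ the form of Corollary \ref{Hooleyandthelargesieve2} whose first (square-modulus) term is absent, a routine adaptation of its proof — with $m$ in the harmonic slot, range $(M_0,(\log X)^{2C_1}]$, and $n_1$ in the conductor slot with $f=1/\tau$, so $\alpha=\tfrac12$; this gives $|S_{\ne\square}(n_0)|\ll_\epsilon N_1^{1/2+\epsilon}(\log X)^{C_1}(\log\log X)^{1/2}(\log N_1)^{-1/4}$, and summing against $(n_0^2c_0^2\tau(n_0))^{-1}$ over $N_1\ge(\log X)^{4C_1}$ — a sum dominated by its lower endpoint — yields $\ll_{C_1,\epsilon}(c_0c_1)^{-1}(\log X)^{-C_1+O(C_1\epsilon)}$, negligible for small $\epsilon$. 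If instead $(\log X)^{3C_1/4}/c_1<N_1<(\log X)^{4C_1}$, the ranges of $m$ and $n_1$ lie within a bounded power of one another, so we reduce to squarefree moduli: write $m=g^2h$, $n_1=d^2e$ with $h,e$ odd and squarefree (hence $h\ge 3$ and $(\tfrac{\cdot}{h})$ non-principal), note $(\tfrac{n_1}{m})=\mathbf 1_{(de,g)=1}\mathbf 1_{(d,h)=1}(\tfrac{e}{h})$, apply Heath-Brown's bound \eqref{IntroHBbound} to the inner sum over $(e,h)$ for each fixed $(d,g)$, and resum over $d,g$ against $g^{-2}$ and $\tau(d^2e)^{-1}$ by partial summation; the $M^{-1/2}$-term of \eqref{IntroHBbound} is here a power-of-$\log X$ saving, giving $|S_{\ne\square}(n_0)|\ll_{C_1,\epsilon}N_1(\log X)^{\epsilon-c}$ for some $c=c(C_1)>0$ and hence $\ll_{C_1}(c_0c_1)^{-1}(\log X)^{\epsilon-c}(\log\log X)^{1/2}$ after summing over $n_0$ — again negligible. (The bound \eqref{IntroF--I} could replace \eqref{IntroHBbound}, but the $M^{-1/2}$ of \eqref{IntroHBbound} is the natural match.) Finally, the square-$n_1$ terms inside $S_{\ne\square}(n_0)$ contribute only $\ll\sqrt{N_1}\log\log X$ by sparsity, hence $\ll(c_0c_1)^{-1}(\log X)^{-3C_1/8}\log\log X$ after summing — negligible.

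Adding the three pieces, the $S_\square$ term dominates and gives the stated bound. Beyond the main-term bookkeeping, the principal obstacle is the second delicate point: performing the squarefree reduction and the partial summations that restore the weights $1/m$ and $1/\tau(n_1)$ in the intermediate range cleanly and uniformly in $c_0,c_1$, so that the power-of-$\log X$ savings extracted from \eqref{IntroHBbound} and from the non-square version of Corollary \ref{Hooleyandthelargesieve2} survive the accumulated $(\log X)^{\epsilon}$- and $c_0,c_1$-factors. This uniform combination — partial summation together with a large sieve over non-square moduli weighted by $1/\tau$ — is of the same character as what the general Lemma \ref{Lfunctionweirdaverage} behind Corollary \ref{LfunctionweirdaverageIntro} is designed to deliver, which is why the present corollary is presented alongside it.
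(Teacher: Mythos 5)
Your main-term computation (the square values of $m$) is correct and does deliver the claimed exponent: the factor $M_0^{-1/2}\asymp_{C_1,C_2}(\log\log X)^{-C_2/2}$ from $\sum_{k^2>M_0}k^{-2}$ times the $\log\log X$ from $\sum_{n_0}1/(n_0\tau(n_0)\sqrt{\log n_0})$ gives $(\log\log X)^{-C_3}$. The gap is in the skewed range $N_1\ge(\log X)^{4C_1}$, where you invoke ``the form of Corollary \ref{Hooleyandthelargesieve2} whose first (square-modulus) term is absent'' for non-square $m$. This is not a routine adaptation, and it is false: in the proof of Theorem \ref{Hooleyandthelargesieve1} the first term does not come from square values of the harmonically weighted (numerator) variable at all, but from the Cauchy--Schwarz diagonal, i.e.\ from pairs $m,m'$ with $mm'=\square$ (in particular $m=m'$, and more generally equal squarefree kernels), and these pairs are untouched when you delete the perfect squares $m$. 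Concretely, the bound you assert for $S_{\ne\square}(n_0)$ fails for admissible sequences: take $a_m\equiv 1$ and $b_{n_1}=\left(\frac{n_1}{m_0}\right)$ for one fixed odd non-square $m_0\in(M_0,(\log X)^{2C_1}]$; then the single term $m=m_0$ already contributes $\gg N_1/(m_0\sqrt{\log N_1})$, which for $N_1$ of size up to $X^{1/2}$ is vastly larger than $N_1^{1/2+\epsilon}(\log X)^{C_1}(\log\log X)^{1/2}(\log N_1)^{-1/4}$. So the step ``$S_{\ne\square}$ is negligible in the skewed range'' is wrong, and it is precisely the step carrying the whole difficulty.

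If you restore the first term of Corollary \ref{Hooleyandthelargesieve2} (as you must), the skewed range contributes per $n_0$ roughly $N_1(\log\log X)M_0^{-1/2}(\log N_1)^{-1/2}$, and after summing over $n_0$ you get $\ll (\log\log X)^{2-C_2/2}/(c_0c_1)$, one factor $\log\log X$ above the stated bound; so your argument, once repaired, does not reach $C_3=C_2/2-1$ by this route. For comparison, the paper does not split according to square $m$: after quadratic reciprocity it applies Lemma \ref{Hooleyandthelargesieve3} (the $f=1/\tau$, $\alpha=1/2$ case of Corollary \ref{Hooleyandthelargesieve2}) directly to the inner bilinear sum in the two skewed regions $T_1,T_2$ and the harmonic Friedlander--Iwaniec bound, Lemma \ref{Harmonic-Friedlander--Iwaniec}, in the balanced region $T_3$; note that the paper's own $T_1$ estimate, $\ll \log\log X/(c_0c_1(\log Z)^{C_2/2-1})$, carries exactly the same extra factor of $\log\log X$ (it originates in the $\log N$ of the first term of Corollary \ref{Hooleyandthelargesieve2}, i.e.\ in the diagonal), which shows that this factor is the genuinely delicate point and cannot be removed merely by discarding square moduli. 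Your intermediate-range treatment (squarefree reduction plus Heath-Brown's bound) is plausible, if fiddly, and is not the issue; the skewed range is where your proposal needs a new idea or must settle for a weaker exponent.
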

Note that this statement is stated in a way that is convenient when dealing with certain character sums that occur in \S\ref{hyperbolicanalysis1}. On its own, however, this result nicely demonstrates the effectiveness of Theorem \ref{Hooleyandthelargesieve1}. Let $T(X)$ denote the sum on the left-hand side of Corollary \ref{AveragingovermediumconductorsIntro} when $c_0=c_1=1$. Then, using the triangle inequality (i.e, ignoring the oscillation of the Jacobi symbol) and bounding the sum over $n_1$ using \eqref{ShiuIntro} ($\alpha=1/2$), we obtain
\begin{equation}\label{mediumconductortrivialbound}
T(X) \ll_{C_1} \sum_{\substack{(\log X)^{3C_1/4}<n_0\leq X^{1/2}}}\frac{\log\log X}{n_0\tau(n_0)\sqrt{\log n_0}} \ll (\log\log X)^2.
\end{equation}
This second bound is obtained using partial summations and \eqref{ShiuIntro}. On the other hand, applying partial summation to the sum over $m$ in $T(X)$ and then using only \eqref{IntroF--I} in the resulting sums over $n_1$ and $m$ (i.e ignoring the $\frac{1}{\tau}$ factor), the inner sums of $T(X)$ are bounded by
\begin{equation}\label{standardF--Ibound}
\mathop{\sum\sum}_{\substack{(C_1\log \log X)^{C_2}< m \leq (\log X)^{2C_1}\\ n_1c_1\leq n_0c_0}}\frac{a_m b_{n_1}}{m\tau(n_1)}\left(\frac{n_1}{m}\right) \ll_{C_1,C_2} \frac{n_0(\log n_0)^{7/6}}{(\log\log X)^{A}}
\end{equation}
for some $A>0$ depending on $C_2$. Inserting this into $T(X)$ and then summing over $n_0$ using partial summation and \eqref{ShiuIntro} we have
\[
T(X) \ll_{C_1,C_2} \frac{\sqrt{\log X}}{(\log\log X)^{A'}}
\]
for some other $A'>0$. In the applications discussed below, we will require that expressions of the shape $T(X)$ are at least $o(\log\log X)$ and so neither of the bounds given above will be admissible. By using Theorem \ref{Hooleyandthelargesieve1} as well as \eqref{IntroF--I}, we obtain both the slower convergence in the sum over $n_0$ seen in \eqref{mediumconductortrivialbound} and the heuristic convergence of the sums over $m$ seen in \eqref{standardF--Ibound}.

\subsection{Character sums over hyperbolic regions} The secondary purpose of this paper is to provide asymptotics for character sums with a certain hyperbolic height condition. Throughout, let $\|a_1,\ldots,a_n\|=\max\{\lvert a_1\rvert,\ldots,\lvert a_n\rvert\}$ for any $n\in\N$. For the duration of this exposition we set the notation $h(\b{k},\b{l})=\|n_0m_0,n_1m_1\|\cdot\|n_2m_2,n_3m_3\|$ for vectors  $\b{k},\b{l}\in\N^4$. The character sums we wish to consider are of the form
\begin{equation}\label{Introcharactersum}
    \mathop{\sum\sum}_{\substack{h(\b{k},\b{l})\leq B\\ \|n_0,m_0,n_1,m_1\|>1\\ \|n_2,m_2,n_3,m_3\|>1
    }}\left(\prod_{i=0}^{3}\frac{1}{\tau(n_i)\tau(l_i)}\right)\left(\frac{m_2m_3}{n_0n_1}\right)\left(\frac{m_0m_1}{n_2n_3}\right).
\end{equation}
Our purpose in studying such character sums arises from the following counting problem:
\begin{equation}\label{thecountingproblem}
N(B) = \#\left\{(y_0,y_1,y_2,y_3)\in\Z^4:\begin{array}{c} y_0y_1 = y_2y_3,\;Q_{\b{y}}(\Q)\neq \emptyset \\\mathrm{gcd}(y_0,y_1)=\mathrm{gcd}(y_2,y_3)=1,\\ -y_0y_2\neq\square\;\text{and}\; -y_0y_3\neq \square,\\\|y_0,y_1,y_2,y_3\|\leq B \end{array}\right\}
\end{equation}
where $Q_{\b{y}}$ denotes the rational diagonal quadric surface defined by the equation
\[
y_0x_0^2+y_1x_1^2+y_2x_2^2+y_3x_3^2 = 0
\]
and $Q_{\b{y}}(\Q)$ denotes the set of rational points of $\Q$. This counting problem is an example of a local solubility problem. Starting from work of Serre \cite{SerreConics}, there has been a growing body of research into providing bounds and asymptotics for such counting problems \cite{Bhargava,BH-B,BL,BLS,BLSmeets,DLS,F--I,Guo,Hooley1,Hooley2,KPSS,L,LRS,LS,LTbT,Sofos,SofosVisse}. Conjectures into the asymptotic order of growth have been given by Loughran and Smeets \cite{LS} and Loughran, Rome, and Sofos \cite{LRS}. The counting problem \eqref{thecountingproblem} is of particular interest as the family of quadrics $Q_{\b{y}}$ is parameterised by rational points on the quadric surface cut out by the equation $y_0y_1=y_2y_3$. If we allow points with $-y_0y_2=\square$ or $-y_0y_3=\square$, then it was shown in \cite{BLS} that
\begin{equation}\label{BLSIntro}
B^2\ll \#\left\{(y_0,y_1,y_2,y_3)\in\Z^4:\begin{array}{c} y_0y_1 = y_2y_3,\;Q_{\b{y}}(\Q)\neq \emptyset \\\mathrm{gcd}(y_0,y_1)=\mathrm{gcd}(y_2,y_3)=1,\\\|y_0,y_1,y_2,y_3\|\leq B \end{array}\right\} \ll B^2.
\end{equation}
Although this particular counting problem does not fall under the aforementioned conjectures, the asymptotic order of $B^2$ is larger than what would be expected by following the same philosophy. Further, they associate the abundance of soluble fibres to the ``thin'' set of points $(y_0,y_1,y_2,y_3)$ such that $-y_0y_2=\square$ or $-y_0y_3=\square$. One therefore wonders if the asymptotic order $\frac{B^2}{\log B}$ is true once we remove this set of points. In an adjacent paper, \cite{MeQuadricsPaper}, we prove this to be false by proving the true asymptotic:
\begin{theorem}[\cite{MeQuadricsPaper}, Theorem $1.1$]\label{quadricstheorem}
    For sufficiently large $B$,
    \[
    N(B)\sim \frac{cB^2\log\log B}{\log B}
    \]
    for some positive constant $c>0$.
\end{theorem}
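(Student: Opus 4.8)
The plan is to reduce the count \eqref{thecountingproblem} to the character sums \eqref{Introcharactersum} analysed in \S\ref{hyperbolicanalysis1}, to isolate their main term, and to show that the oscillating remainder has strictly smaller order by means of the sieve bounds of \S\ref{HooleyLargeSieve}. First I would parametrise the hyperbola $y_0y_1=y_2y_3$: the conditions $\gcd(y_0,y_1)=\gcd(y_2,y_3)=1$ force the classical description $y_0=AB$, $y_1=CD$, $y_2=AC$, $y_3=BD$ with $A,B,C,D$ positive and pairwise coprime (signs being disposed of by a finite case split), so that $N(B)$ becomes a sum over such quadruples, truncated by $\|AB,CD,AC,BD\|\le B$ and weighted by the indicator that $Q_{\mathbf y}$ is everywhere locally soluble. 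By Hasse--Minkowski that indicator equals $\prod_v\mathbf 1[Q_{\mathbf y}(\QQ_v)\neq\emptyset]$; at an odd prime $p$ the local factor is $1$ unless $p$ divides exactly two of the $y_i$, in which case it reduces to a single Legendre-symbol condition on the complementary pair, of density $\tfrac12$. Multiplying these factors over $p$, applying quadratic reciprocity, and performing a change of variables recasting the residue conditions in the notation of \eqref{Introcharactersum}, one obtains an identity writing the solubility indicator as the ``principal'' term $1$, plus oscillating products of Jacobi symbols of exactly the shape $\left(\frac{m_2m_3}{n_0n_1}\right)\left(\frac{m_0m_1}{n_2n_3}\right)$, plus a ``diagonal'' contribution in which those symbols collapse because $-y_0y_2$ or $-y_0y_3$ is a square. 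The conditions $-y_0y_2\neq\square$, $-y_0y_3\neq\square$ in \eqref{thecountingproblem} are there precisely to delete this diagonal piece --- the thin set responsible for the spurious growth $\asymp B^2$ of \eqref{BLSIntro} --- while the divisor weights $\frac{1}{\tau(n_i)\tau(l_i)}$ of \eqref{Introcharactersum} are the $2^{-\omega}$ factors generated when the product of density-$\tfrac12$ local factors is multiplied out.

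After this reduction one writes $N(B)=M(B)+E(B)$, where $M(B)$ collects the contribution of the principal term (the sum over quadruples, with the square- and size-conditions and the $\frac1\tau$-weights, of the product of the density-$\tfrac12$ local factors) and $E(B)$ collects the oscillating character sums. To evaluate $M(B)$ I would pass to the multiple Dirichlet series in the variables $A,B,C,D$ and run a Perron-type, Selberg--Delange style contour argument against the hyperbolic region: the two-dimensionality of $y_0y_1=y_2y_3$ produces the $B^2$, the combined effect of the $\frac1\tau$-weights and the density-$\tfrac12$ local factors turns a power of $\log B$ into $(\log B)^{-1}$, and a residual sum over one ``medium-sized'' bad prime --- a sum $\sum_p p^{-1}$ over $p$ in a range of length $(\log B)^{O(1)}$, which is exactly the structure isolated by Corollary \ref{AveragingovermediumconductorsIntro} --- contributes the additional factor $\log\log B$. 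This gives $M(B)\sim cB^2\log\log B/\log B$, and $c>0$ is checked by verifying that each local density entering the constant is strictly positive.

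The main obstacle is the remainder $E(B)$, which has to be shown to be $o\bigl(B^2\log\log B/\log B\bigr)$; this is where the sieve of the present paper is indispensable. One decomposes the ranges of the two ``sides'' $\|n_0m_0,n_1m_1\|$ and $\|n_2m_2,n_3m_3\|$ dyadically. In the balanced ranges the Friedlander--Iwaniec bound \eqref{IntroF--I} (or \eqref{IntroHBbound}, where no loss of logarithms can be afforded) is applied; but in the hyper-skewed ranges, where one side is only a small power of the other, neither these nor the triangle inequality suffices --- as the model computation with $T(X)$ shows, they yield $(\log\log B)^2$ or $\sqrt{\log B}/(\log\log B)^A$, both $\gg\log\log B$, whereas here one must obtain an $o(\log\log B)$ saving in the normalised sum. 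In those ranges one has to exploit simultaneously the oscillation of the Jacobi symbol and the genuine saving from averaging $\frac1\tau$, which is precisely what Theorem \ref{Hooleyandthelargesieve1} and Corollaries \ref{Hooleyandthelargesieve2} and \ref{AveragingovermediumconductorsIntro} provide (with $\alpha=\tfrac12$ for the $\frac1\tau$-weight); feeding these bounds into the dyadic pieces gives the required estimate for $E(B)$. The truly delicate point is organisational rather than analytic: one must set up the parametrisation so that in every dyadic range the character sum that appears genuinely carries a multiplicative weight $f(m)$ on the longer of the two variables, matching the hypotheses of Theorem \ref{Hooleyandthelargesieve1}, while keeping the tower of coprimality and M\"obius conditions attached to $A,B,C,D$ under control. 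Once that is arranged, the analytic inputs --- Selberg--Delange for $M(B)$ and the large sieve for $E(B)$ --- are comparatively routine.
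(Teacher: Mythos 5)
First, note that the present paper does not actually prove Theorem \ref{quadricstheorem}: it is imported from the companion paper \cite{MeQuadricsPaper}, and here only the character-sum ingredients (Propositions \ref{maintermproposition}, \ref{symmetrictypeaverage1}, \ref{Asymmetrictypeaverage1}, Lemma \ref{fixedconductorlemmaforvanishingmainterms}, Propositions \ref{symmetrictypeaverage2}, \ref{Asymmetrictypeaverage2}, together with \cite[Theorem 1.1]{Me}) are established. Your overall architecture --- parametrise $y_0y_1=y_2y_3$, unfold the local solubility conditions into Jacobi-symbol sums with $1/\tau$ weights of the shape \eqref{Introcharactersum}, split off the non-oscillating part, and use \eqref{IntroF--I}/\eqref{IntroHBbound} in balanced ranges and Theorem \ref{Hooleyandthelargesieve1} (with $\alpha=1/2$) in hyper-skewed ranges --- is consistent with the strategy outlined in the introduction. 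But there is a genuine error in your account of the main term. You claim the factor $\log\log B$ arises from ``a residual sum over one medium-sized bad prime'' and that this is ``exactly the structure isolated by Corollary \ref{AveragingovermediumconductorsIntro}''. This is not so: Corollary \ref{AveragingovermediumconductorsIntro} is purely an error-term device (it enters only in the asymmetric small-conductor case, Proposition \ref{Asymmetrictypeaverage1}, to beat the $(\log\log X)^2$ and $\sqrt{\log X}$ losses of \eqref{mediumconductortrivialbound} and \eqref{standardF--Ibound}), and it produces a \emph{saving}, not a main term. The $\log\log B$ in the asymptotic comes from the principal-character analysis of the hyperbolic height condition itself: after Lemma \ref{Siegel--Walfisz1} is applied to the inner pair of variables, the outer pair contributes $\sum_{n\leq X} 1/\bigl(n\tau(n)\sqrt{\log n}\bigr)\asymp\log\log X$, which is exactly Lemma \ref{maintermlemma1} and then Proposition \ref{maintermproposition}; moreover the main term is located in the pieces of \eqref{Introanalysissum1} where $n_0=n_1=n_2=n_3=1$ or $m_0=m_1=m_2=m_3=1$, because the conditions $\|n_0,m_0,n_1,m_1\|>1$, $\|n_2,m_2,n_3,m_3\|>1$ kill the would-be $B^2$ term. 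A main-term computation run as you describe would not produce the correct structure or constant.

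Two further points where your sketch would need repair before it could be called a proof. The decomposition of the error term in the intended argument is by conductor size, with threshold $(\log B)^{1000}$, giving the three regions \eqref{Introanalysissum1}--\eqref{Introanalysissum3}; the region where both conductor blocks are large is \emph{not} handled by \eqref{IntroF--I} or \eqref{IntroHBbound} over dyadic boxes but by the hyperbolic bilinear estimate of \cite[Theorem 1.1]{Me}, precisely because the height condition $\|n_0m_0,n_1m_1\|\cdot\|n_2m_2,n_3m_3\|\leq B$ couples the two blocks and a box decomposition is where logarithms are lost; your proposal omits this input entirely. Finally, the unfolding of $Q_{\b{y}}(\Q_v)\neq\emptyset$ is more delicate than ``$p$ divides exactly two of the $y_i$ gives one Legendre condition of density $\tfrac12$'': for a quaternary diagonal form, $p$-adic insolubility also requires the discriminant condition (and the places $v=2,\infty$ must be treated separately), which is where the congruence classes mod $8$, the sign split, and the conditions $-y_0y_2\neq\square$, $-y_0y_3\neq\square$ interact; this bookkeeping is exactly what produces the constraints \eqref{section3sumconds} that the results of \S\ref{hyperbolicanalysis1}--\ref{hyperbolicanalysis2} are formulated to accommodate, and it cannot be waved through.
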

After various reductions, the proof of this theorem will hinge on proving asymptotics for character sums of the form \eqref{Introcharactersum} (note that the condition $-y_0y_2\neq\square$ and $-y_0y_3\neq\square$ in \eqref{thecountingproblem} translates to the $\|n_0,m_0,n_1,m_1\|>1$ and $\|n_0,m_0,n_1,m_1\|>1$ conditions \eqref{Introcharactersum}). By decomposing $\N^{8}$ appropriately \eqref{Introcharactersum} sum will split into smaller sums, each of which will be one of the following three types:
\begin{align}
&\mathop{\sum\sum}_{\substack{h(\b{k},\b{l})\leq B \\ \|m_0,m_1,m_2,m_3\|\leq (\log B)^{1000}\\\|n_0,m_0,n_1,m_1\|>1\\ \|n_2,m_2,n_3,m_3\|>1}}\left(\prod_{i=0}^{3}\frac{1}{\tau(n_i)\tau(l_i)}\right)\left(\frac{m_2m_3}{n_0n_1}\right)\left(\frac{m_0m_1}{n_2n_3}\right),\label{Introanalysissum1}\\ &\mathop{\sum\sum}_{\substack{h(\b{k},\b{l})\leq B \\ \|n_2,m_2,n_3,m_3\|\leq (\log B)^{1000}\\\|n_0,m_0,n_1,m_1\|>1\\ \|n_2,m_2,n_3,m_3\|>1}}\left(\prod_{i=0}^{3}\frac{1}{\tau(n_i)\tau(l_i)}\right)\left(\frac{m_2m_3}{n_0n_1}\right)\left(\frac{m_0m_1}{n_2n_3}\right),\label{Introanalysissum2}\\
&\mathop{\sum\sum}_{\substack{h(\b{k},\b{l})\leq B \\ \|n_0,n_1\|,\|m_2,m_3\|>(\log B)^{1000}}}\left(\prod_{i=0}^{3}\frac{1}{\tau(n_i)\tau(l_i)}\right)\left(\frac{m_2m_3}{n_0n_1}\right)\left(\frac{m_0m_1}{n_2n_3}\right).\label{Introanalysissum3}
\end{align}
In \S\ref{hyperbolicanalysis1} and \S\ref{hyperbolicanalysis2} we deal sums of the form \eqref{Introanalysissum1} and \eqref{Introanalysissum2}. The main tool for sums of the form \eqref{Introanalysissum3} is a previous result of the author, \cite[Theorem $1.1$]{Me}, and so we will not consider sums of this form in the current paper. How the results of \S\ref{hyperbolicanalysis1}, \S\ref{hyperbolicanalysis2} and \cite[Theorem $1.1$]{Me} combine to give Theorem \ref{quadricstheorem} is the topic of the companion paper \cite{MeQuadricsPaper}.\\

The basic strategy for handling sums of the form \eqref{Introanalysissum1} and \eqref{Introanalysissum2} is to use the hyperbola method on the height condition $h(\b{k},\b{l})$ and apply Selberg--Delange methods (see Lemma \ref{Siegel--Walfisz1}). This strategy will result in issues, as we will come across hyper-skewed regions where current Selberg--Delange results become worse than the trivial bound. Furthermore, the usual partners of Selberg--Delange results in character sum methods, the large sieve \eqref{IntroHBbound} and \eqref{IntroF--I} also fail to give satisfactory bounds for these error terms. We thus employ Theorem \ref{Hooleyandthelargesieve1} in the guise of Corollaries \ref{LfunctionweirdaverageIntro} and \ref{AveragingovermediumconductorsIntro} to handle these error terms.\\

In \S\ref{technicalstuff} we gather all of the results required to prove the results of \S\ref{hyperbolicanalysis1} and \S\ref{hyperbolicanalysis2}. In \S\ref{hyperbolicanalysis1} we consider sums of the form \eqref{Introanalysissum1}, and in the final section, \S\ref{hyperbolicanalysis2}, we consider sums of the form \eqref{Introanalysissum2}. The statements of the results of these latter sections are rather complicated, as they are designed to be of necessary generality for their application to the proof of Theorem \ref{quadricstheorem}. The main results of these sections are Propositions \ref{maintermproposition}, \ref{symmetrictypeaverage1} and \ref{Asymmetrictypeaverage1} from \S\ref{hyperbolicanalysis1} and Lemma \ref{fixedconductorlemmaforvanishingmainterms} and Propositions \ref{symmetrictypeaverage2} and \ref{Asymmetrictypeaverage2} from \S\ref{hyperbolicanalysis2}.

Lastly, we remark that the methods of this paper may also be used to consider \eqref{Introcharactersum} without the extra conditions that $\|n_0,m_0,n_1,m_1\|>1$, $\|n_2,m_2,n_3,m_3\|>1$ (i.e character sums corresponding to finding the asymptotic formula for \ref{BLSIntro} as opposed to upper and lower bounds). By doing so, we will obtain variants of \eqref{Introanalysissum2} in which we allow $n_0=m_0=n_1=m_1=1$, which would be of order $B^2$. As a result, computing asymptotics for \eqref{Introcharactersum} without $\|n_0,m_0,n_1,m_1\|>1$ and $\|n_2,m_2,n_3,m_3\|>1$ would not require error terms which are are as strong as we provide through the use of Theorem \ref{Hooleyandthelargesieve1} (for example, \eqref{mediumconductortrivialbound} would suffice instead of the Corollary \ref{AveragingovermediumconductorsIntro}). With the condition that $\|n_0,m_0,n_1,m_1\|>1$, $\|n_2,m_2,n_3,m_3\|>1$, the main term of \eqref{Introcharactersum} no longer comes from regions of the form \eqref{Introanalysissum2}, but from \eqref{Introanalysissum1} - in particular the sections where $n_0=n_1=n_2=n_3=1$ or $m_0=m_1=m_2=m_3=1$.

\subsection{Acknowledgements} I would like to thank my Ph.D. supervisor, Efthymios Sofos, for his guidance throughout this project, and to Tim Browning for many helpful comments on the introduction. Lastly, I am grateful to The Carnegie Trust for the Universities of Scotland for sponsoring my Ph.D.

\section{Hooley Neutralisers and the Large Sieve}\label{HooleyLargeSieve}
In this section will prove Theorem \ref{Hooleyandthelargesieve1}. This is done with the use of Hooley neutralisers. We will begin with the following lemma, which is a slight modification of \cite[Proposition $4.1$]{Sofos} and has a similar proof.

\begin{lemma}\label{neutralisers}
    Let $P(z)$ be the product of all odd primes $\leq z$ and suppose $(\lambda^{+}_d)$ is any sequence satisfying
    \begin{equation}\label{upperboundsequencecondition}
    \sum_{d|n}\lambda^{+}_d \geq \mathds{1}(n=1).
    \end{equation}
    For any function $f:\N\rightarrow [0,1]$ such that
    \begin{itemize}
        \item[(1)] $f$ is multiplicative,
        \item[(2)] $f(p^m)\leq f(p)$ for all primes $p$ and all $m\geq 1$,
    \end{itemize}
    define the multiplicative function $\hat{f}:\N\rightarrow\R$ by $\hat{f}(n) = \prod_{p|n}(1-f(p))$. Then for all integers $n$:
    \[
    f(n) \leq \sum_{\substack{d|n\\d|P(z)}}\lambda_d^{+}\hat{f}(d).
    \]
\end{lemma}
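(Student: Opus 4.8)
The plan is to reduce to a squarefree modulus and then exhibit the inequality as a single combinatorial identity all of whose correction terms are manifestly nonnegative.

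\textbf{Step 1 (reduction to a squarefree modulus).} Put $r=\gcd(n,P(z))$. Since $P(z)$ is squarefree, the $d$ occurring on the right-hand side are precisely the divisors of $r$, and for each of them $\hat f(d)=\prod_{p\mid d}(1-f(p))\ge 0$ because $0\le f(p)\le 1$. Write $n=n_1n_2$ where $n_1$ is the part of $n$ supported on the primes dividing $r$ and $n_2$ is the complementary part. By multiplicativity of $f$ together with hypotheses (1)--(2) and $0\le f\le 1$,
\[
f(n)=f(n_1)f(n_2)\le f(n_1)=\prod_{p\mid r}f\bigl(p^{v_p(n)}\bigr)\le\prod_{p\mid r}f(p)=f(r).
\]
Hence it suffices to prove $f(r)\le\sum_{d\mid r}\lambda_d^{+}\hat f(d)$ for every squarefree $r$, using \eqref{upperboundsequencecondition} only for the divisors $m\mid r$.

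\textbf{Step 2 (the key identity).} I claim that for squarefree $r$,
\[
\sum_{d\mid r}\lambda_d^{+}\hat f(d)=f(r)+\sum_{m\mid r}\hat f(m)\,f(r/m)\left(\sum_{d\mid m}\lambda_d^{+}-\mathds{1}(m=1)\right).
\]
To verify this one expands the right-hand side, interchanges the order of summation to collect the coefficient of each $\lambda_d^{+}$, and observes that this coefficient equals
\[
\sum_{\substack{m:\,d\mid m\mid r}}\hat f(m)f(r/m)=\hat f(d)\sum_{m'\mid r/d}\hat f(m')\,f\bigl((r/d)/m'\bigr)=\hat f(d)\,(\hat f * f)(r/d),
\]
while $(\hat f * f)(s)=\prod_{p\mid s}\bigl((1-f(p))+f(p)\bigr)=1$ for every squarefree $s$; the constant terms cancel since $\hat f(1)f(r)=f(r)$. (Equivalently, the weights $c_m=\hat f(m)f(r/m)$ are forced by Möbius inversion over the Boolean divisor lattice of $r$ from the requirement that $\lambda_d^{+}$ carry coefficient $\hat f(d)$.)

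\textbf{Step 3 (conclusion).} In the identity of Step 2, each factor $\hat f(m)f(r/m)$ is a product of numbers $f(p)\in[0,1]$ and $1-f(p)\in[0,1]$, hence nonnegative, and each factor $\sum_{d\mid m}\lambda_d^{+}-\mathds{1}(m=1)$ is nonnegative by \eqref{upperboundsequencecondition}. Therefore $\sum_{d\mid r}\lambda_d^{+}\hat f(d)\ge f(r)\ge f(n)$, as required. The only genuine obstacle is Step 2: one must guess the correct nonnegative weights $c_m=\hat f(m)f(r/m)$, equivalently recognise the convolution identity $\hat f * f\equiv 1$ on squarefree integers, which is exactly what collapses the coefficient of every $\lambda_d^{+}$ back to $\hat f(d)$. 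Everything else is bookkeeping; note in particular that the argument never uses that the primes are odd or $\le z$ — that information enters only through the definition of $r$.
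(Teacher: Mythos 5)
Your proposal is correct and follows essentially the same route as the paper: the identity in Step 2 is exactly the paper's argument (insert $\sum_{d\mid m}\lambda_d^{+}\geq\mathds{1}(m=1)$ against the nonnegative weights $\hat f(m)f(r/m)$, swap the order of summation with $m=dd'$, and use $\hat f * f\equiv 1$ on squarefree integers) rewritten as an identity plus nonnegativity, and your reduction to the squarefree kernel via hypothesis (2) matches the paper's, merely performed at the start rather than at the end.
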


\begin{proof}
     We let $n$ be a square-free integer composed only of primes $p\leq z$, i.e. $n|P(z)$. Then
    \[
    f(n) = \sum_{m|n}\hat{f}(m)f\left(\frac{n}{m}\right)\mathds{1}(m=1).
    \]
    Since $f$ and $\hat{f}$ are non-negative, we may use \eqref{upperboundsequencecondition} to get the upper bound:
    \[
    f(n)\leq \sum_{m|n}\hat{f}(m)f\left(\frac{n}{m}\right)\left(\sum_{d|m}\lambda_d^{+}\right).
    \]
    By writing $m=dd'$ and reversing the order of summation we get
    \[
    \sum_{m|n}\hat{f}(m)f\left(\frac{n}{m}\right)\left(\sum_{d|m}\lambda_d^{+}\right) = \sum_{d|n}\lambda_{d}^{+}\left(\sum_{d'|\frac{n}{d}}\hat{f}(dd')f\left(\frac{n}{dd'}\right)\right) = \sum_{d|n}\lambda_{d}^{+}\hat{f}(d)\left(\sum_{d'|\frac{n}{d}}\hat{f}(d')f\left(\frac{n}{dd'}\right)\right),
    \]
    this last equality being obtained by noting since $n$ is square-free and $dd'|n$, $d$ and $d'$ must be square-free and co-prime. Now
    \[
    \sum_{d'|\frac{n}{d}}\hat{f}(d')f\left(\frac{n}{dd'}\right) = \hat{f}*f\left(\frac{n}{d}\right)
    \]
    where $*$ denotes Dirichlet convolution. Then since $f$ and $\hat{f}$ are multiplicative, $\hat{f}*f$ will also be multiplicative. However, $\hat{f}*f(p)=1$ for all primes $p$ since $\hat{f}(p) = (1-f(p))$. Therefore $\hat{f}*f(n)=1$ for all square-free integers $n$. Thus
    \[
    \sum_{d'|\frac{n}{d}}\hat{f}(d')f\left(\frac{n}{dd'}\right) = 1.
    \]
    It follows that
    \[
    f(n)\leq \sum_{d|n}\lambda_{d}^{+}\hat{f}(d).
    \]
    Since we assumed that $n|P(z)$ we are done in this case. For more general integers $n$ we write:
    \[
    n = \left(\prod_{\substack{p|n\\p|P(z)}} p^{v_p(n)}\right) \left(\prod_{\substack{q|n\\q\nmid P(z)}}q^{v_q(n)}\right).
    \]
    Then, since $f$ is multiplicative and has image in $[0,1]$ we use assumption $(2)$ to obtain:
    \[
    f(n) \leq f\left(\prod_{\substack{p|n\\p|P(z)}} p\right) \leq \sum_{\substack{d|n\\ d|P(z)}}\lambda_{d}^{+}\hat{f}(d),
    \]
    where we use the previous case in this last inequality.
\end{proof}

We will be interested in using this for upper bound sieve coefficents. Fix some $z>0$ and let $y=z^{10}$. Then in particular we define the upper bound sieve coefficients
\begin{equation}\label{sievecoefficients}
\lambda_{d}^{+} = \mathds{1}(d\in\mathcal{D}^{+})\mu(d)
\end{equation}
where
\[
\mathcal{D}^{+} = \{d=p_1\ldots p_k\in\N:z>p_1>\ldots>p_k,\;p_m<y_m\;\text{for $m$ odd} \}
\]
for $y_m = \left(\frac{y}{p_1\ldots p_m}\right)^{1/\beta}$ and some $\beta>1$. It is well known that
\begin{equation}\label{delta}
    \sum_{d|n}\lambda_{d}^{+} \geq \sum_{d|n}\mu(d) = \mathds{1}(n=1)
\end{equation}
and that the $\lambda_d^+$ are supported on the interval $[1,y]$. We note also that any multiplicative function $f$ satisfying the conditions of Lemma \ref{neutralisers} will also satisfy conditions $(i)$ and $(ii)$ of \cite[Section $2$]{Shiu}. We may then use \cite[Theorem $1$]{Shiu} with, $k=1$, $Y=X$ to obtain the bound
\begin{equation}\label{Brun-Titchmarshformultiplicative}
    \sum_{n\leq X}f(n) \ll \frac{X}{\log X}\exp\left(\sum_{p\leq X}\frac{f(p)}{p}\right).
\end{equation}
Furthermore, we will require that our multiplicative functions satisfy,
\[
\sum_{p\leq X}\frac{f(p)}{p} = \alpha\log\log X + O(1)
\]
for some $\alpha>0$. With this in mind, \eqref{Brun-Titchmarshformultiplicative} becomes,
\begin{equation}\label{Brun-Titchmarshformultiplicative2}
    \sum_{n\leq X}f(n) \ll \frac{X}{(\log X)^{1-\alpha}}.
\end{equation}
The following lemma encodes the insertion of the Brun Sieve into the large sieve,
\begin{lemma}\label{sieveapplication}
    Let $X\geq 2$. Fix some $\epsilon>0$ and set $z=X^{\epsilon/10}$ and $y=X^{\epsilon}$. Let $f$ be any function $f:\N\rightarrow [0,1]$ such that
    \begin{itemize}
        \item[(1)] $f$ is multiplicative,
        \item[(2)] $f(p^m)\leq f(p)$ for all primes $p$ and all $m\geq 1$.
    \end{itemize}
    Suppose also that there exists some $0<\alpha\leq 1$ such that, for all $2\leq Y$,
    \begin{equation}\label{kindamerten}
    \sum_{p\leq Y}\frac{f(p)}{p} = \alpha\log\log Y + O(1).
    \end{equation}
    Then for the sieve coefficients $(\lambda_d^+)$ defined above and any integer $n$,
    \[
    \left\lvert\sum_{\substack{d\leq X\\ d|P(z)\\ \mathrm{gcd}(d,n)=1}}\lambda_d^{+}f(d) \sum_{\substack{m\leq X\\d|m\\ \mathrm{gcd}(m,n)=1}} 1\right\rvert \ll_{\epsilon} \frac{\phi(n)X}{n(\log X)^{\alpha}}\prod_{\substack{p|P(z)\\ p|n}}\left(1-\frac{f(p)}{p}\right)^{-1} + X^{\epsilon}\tau(n)
    \]
    where the implied constant depends at most on $\epsilon$.
\end{lemma}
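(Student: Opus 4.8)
The plan is to compute the inner sum over $m$ exactly and then recognise the remaining sum over $d$ as a truncated combinatorial sieve sum. Since the summation conditions force $\gcd(d,n)=1$ and $d\mid m$, writing $m=dm'$ gives
\[
\sum_{\substack{m\leq X\\ d\mid m\\ \gcd(m,n)=1}}1=\#\{m'\leq X/d:\gcd(m',n)=1\}=\frac{\phi(n)}{n}\cdot\frac{X}{d}+O\bigl(2^{\omega(n)}\bigr),
\]
by M\"obius inclusion--exclusion over the squarefree divisors of $n$. We may assume $\epsilon$ is small, say $\epsilon\leq1$, which is the only regime occurring in our applications; then the coefficients $\lambda_d^{+}$, being supported on $[1,y]=[1,X^{\epsilon}]\subseteq[1,X]$, are unaffected by the truncation $d\leq X$. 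Substituting the display above and using that $\lvert\lambda_d^{+}\rvert\leq1$, $0\leq f\leq1$, and that this support contains at most $y=X^{\epsilon}$ integers, the left-hand side of the lemma is
\[
\leq\ \frac{\phi(n)X}{n}\,\lvert S\rvert+O\bigl(X^{\epsilon}2^{\omega(n)}\bigr)=\frac{\phi(n)X}{n}\,\lvert S\rvert+O\bigl(X^{\epsilon}\tau(n)\bigr),\qquad S:=\sum_{\substack{d\mid P(z)\\ \gcd(d,n)=1}}\lambda_d^{+}\frac{f(d)}{d},
\]
and the second term is already the claimed error term, so it remains to bound $\frac{\phi(n)X}{n}\lvert S\rvert$.

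To estimate $S$, introduce the multiplicative sifting density $g$ with $g(p)=f(p)/p$ for $p\nmid n$ and $g(p)=0$ for $p\mid n$; for squarefree $d$ one has $g(d)=(f(d)/d)\,\mathds{1}(\gcd(d,n)=1)$, so $S=\sum_{d\mid P(z)}\lambda_d^{+}g(d)$ is precisely the combinatorial sieve sum attached to $g$ and to the set $\mathcal{D}^{+}$ of \eqref{sievecoefficients}, whose unsifted counterpart is
\[
V(z):=\sum_{d\mid P(z)}\mu(d)g(d)=\prod_{\substack{p\mid P(z)\\ p\nmid n}}\Bigl(1-\frac{f(p)}{p}\Bigr).
\]
Because $d\mid P(z)$ involves only odd primes, $0\leq g(p)\leq\tfrac{1}{3}$, and by Mertens' theorem $\prod_{u\leq p<v}(1-g(p))^{-1}\leq\prod_{u\leq p<v}(1-1/p)^{-1}\ll\log v/\log u$, so $g$ has sieve dimension at most $1$ with an absolute constant. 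Since $\log y/\log z=10$ is a fixed quantity, large relative to this dimension, the fundamental lemma of sieve theory, applied to the upper-bound coefficients $\lambda_d^{+}$, gives $\lvert S\rvert\ll V(z)$ with an absolute implied constant (in fact $S$ agrees with $V(z)$ up to a factor $1+O(e^{-10})$), uniformly in $n$ and $X$.

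Finally we put $V(z)$ in the required form. Separating out the primes dividing $n$,
\[
V(z)=\prod_{p\mid P(z)}\Bigl(1-\frac{f(p)}{p}\Bigr)\cdot\prod_{\substack{p\mid P(z)\\ p\mid n}}\Bigl(1-\frac{f(p)}{p}\Bigr)^{-1},
\]
and, using $\log(1-t)=-t+O(t^{2})$ for $0\leq t\leq\tfrac12$ together with \eqref{kindamerten}, we get $\prod_{p\mid P(z)}(1-f(p)/p)\asymp\exp(-\alpha\log\log z)=(\log z)^{-\alpha}\asymp_{\epsilon}(\log X)^{-\alpha}$ since $\log z=\tfrac{\epsilon}{10}\log X$. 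Hence
\[
\frac{\phi(n)X}{n}\,\lvert S\rvert\ll_{\epsilon}\frac{\phi(n)X}{n(\log X)^{\alpha}}\prod_{\substack{p\mid P(z)\\ p\mid n}}\Bigl(1-\frac{f(p)}{p}\Bigr)^{-1},
\]
and combining this with the error term $O(X^{\epsilon}\tau(n))$ from the first paragraph yields the lemma.

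\textbf{Main obstacle.} Everything above is elementary (inclusion--exclusion and Mertens-type estimates) except the sieve bound $\lvert S\rvert\ll V(z)$: one must check that $g$ has bounded sieve dimension --- immediate from Mertens --- and then either invoke, or for a self-contained treatment reprove, the fundamental lemma for the combinatorial sieve $\mathcal{D}^{+}$ in the regime $\log y/\log z=10$; equivalently one needs the tail estimate $\bigl\lvert\sum_{d\mid P(z),\ d\notin\mathcal{D}^{+}}\mu(d)g(d)\bigr\rvert\ll V(z)$, which is the heart of Brun's method. The only other point requiring a little care is the uniformity in $n$ of the implied constant in $\lvert S\rvert\ll V(z)$, which holds because $g(p)\leq1/p$ regardless of $n$, so the dimension bound (hence the sieve error) is absolute.
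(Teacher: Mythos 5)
Your proof is correct and follows essentially the same route as the paper: compute the inner congruence count with error $O(y\,\tau(n))$, reduce to the complete sieve sum $\sum_{d\mid P(z)}\lambda_d^{+}\,\mathds{1}(\gcd(d,n)=1)f(d)/d$, bound it by the fundamental lemma of sieve theory, and evaluate the resulting Euler product using \eqref{kindamerten} with $\log z\asymp_{\epsilon}\log X$. The only cosmetic difference is that you verify the sieve-dimension hypothesis with the trivial dimension-one bound $f(p)\leq 1$, whereas the paper verifies it with exponent $\alpha$ directly from \eqref{kindamerten}; either suffices for the fundamental lemma, so this is the same argument.
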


\begin{proof}
Using the fact that $\lambda_d^{+}$ is supported on $[1,y]$ we have
\begin{align*}
    \sum_{\substack{d\leq X\\ d|P(z)\\ \mathrm{gcd}(d,n)=1}}\lambda_d^{+}f(d) \sum_{\substack{m\leq X\\d|m\\ \mathrm{gcd}(m,n)=1}} 1 &= \frac{X\phi(n)}{n}\sum_{\substack{d\leq y\\ d|P(z)\\ \mathrm{gcd}(d,n)=1}} \lambda_d^{+}\frac{f(d)}{d} + O(y\tau(n))\\
    &= \frac{X\phi(n)}{n}\sum_{\substack{d|P(z)\\ \mathrm{gcd}(d,n)=1}} \lambda_d^{+}\frac{f(d)}{d} + O(y\tau(n)).
\end{align*}
Next, we would like to apply the fundamental lemma of sieve theory to the sum over $d$, \cite[Fundamental Lemma 6.3, pg.159]{Iwaniec--Kowalski}. In order to do so, we first need to satisfy the condition
\begin{align*}
\prod_{\substack{w\leq p<z\\ p\;\text{prime}}}\left(1-\frac{\mathds{1}(\mathrm{gcd}(n,d)=1)f(p)}{p}\right)^{-1} \leq K\left(\frac{\log z}{\log w}\right)
\end{align*}
for any $0<w\leq z$ where $K$ is some absolute constant. For this we note that, by assumption \eqref{kindamerten} on $f$ it follows that
\begin{align*}
\prod_{\substack{w<p\leq z\\ p\nmid n}}\left(1-\frac{f(p)}{p}\right)^{-1} &= \prod_{\substack{w<p\leq z\\p|n}}\left(1-\frac{f(p)}{p}\right)\prod_{w<p\leq z}\left(1-\frac{f(p)}{p}\right)^{-1},\\
&\ll \left(\frac{\log z}{\log w}\right)^{\alpha} \prod_{\substack{w<p\leq z\\p|n}}\left(1-\frac{f(p)}{p}\right) \ll \left(\frac{\log z}{\log w}\right)^{\alpha}
\end{align*}
since $\prod_{\substack{p|n}}\left(1-\frac{f(p)}{p}\right)<1$ for all $n$. Thus we may apply the fundamental lemma to the sum over $d$ to obtain the upper bound:
\begin{align*}
    \sum_{\substack{d|P(z)}} \lambda_d^{+}\frac{\mathds{1}(\mathrm{gcd}(n,d)=1)f(d)}{d} \ll& \prod_{p|P(z)}\left(1-\frac{\mathds{1}(\mathrm{gcd}(n,d)=1)f(p)}{p}\right) \\
    \ll& \prod_{\substack{p|P(z)\\p|n}}\left(1-\frac{f(p)}{p}\right)^{-1}\prod_{\substack{p|P(z)}}\left(1-\frac{f(p)}{p}\right)\\
    \ll& \frac{1}{(\log z)^{\alpha}}\prod_{\substack{p|P(z)\\p|n}}\left(1-\frac{f(p)}{p}\right)^{-1}.
\end{align*}
Recalling that $z=X^{\epsilon/10}$ and $y=X^{\epsilon}$, we substitute this into our equalities above to obtain the result.
\end{proof}
We now prove the main result of this section.

\begin{proof}[Proof of Theorem \ref{Hooleyandthelargesieve1}]
Set the sum on the left to be $S(N,M)$. Then by the Cauchy--Schwarz inequality:
\[
S(N,M)^2 \ll \left(\sum_{m\leq M} \lvert b_m\rvert f(m)\right) \left(\sum_{m\leq M}\lvert b_m\rvert f(m) \left\lvert \sum_{n\leq N}a_n\left(\frac{n}{m}\right)\right\rvert^2\right)
\]
The first sum over $m$ is bounded using \eqref{Brun-Titchmarshformultiplicative2}:
\[
\sum_{m\leq M}\lvert b_m\rvert f(m) \ll \sum_{m\leq M}f(m) \ll \frac{M}{(\log M)^{1-\alpha}}.
\]
Fix $z=M^{\epsilon/10}$ and $y=z^{10}=M^{\epsilon}$. Using Lemma \ref{neutralisers} we have
%For the second sum we define the multiplicative function $f_z$ by setting $f_z(p^m)=f(p^m)$ for all odd primes $p\leq z$ and $f_z(p^m)=1$ otherwise for all $m\geq 1$. Then we note that $f(m) \leq f_z(m)$ for all integers $m$ and that $f_z$ satisfies conditions of Lemma \ref{neutralisers}.
\[
\lvert b_m \rvert f(m) \leq f(m) \leq \sum_{\substack{d|m\\d|P(z)}}\lambda_d^{+}\hat{f}(d)
\]
where $\hat{f}$ is as defined in Lemma \ref{neutralisers} and $(\lambda_d^+)$ are the sieve coefficients defined in \eqref{sievecoefficients}. Therefore,
\begin{align*}
\sum_{m\leq M}\lvert b_m\rvert f(m)\left\lvert \sum_{n\leq N}a_n\left(\frac{n}{m}\right)\right\rvert^2 \leq& \sum_{m\leq M}\sum_{\substack{d|m\\ d|P(z)}}\lambda_d^{+}\hat{f}(d)\left\lvert \sum_{n\leq N}a_n\left(\frac{n}{m}\right)\right\rvert^2\\ \leq& \sum_{\substack{d\leq M\\ d|P(z)}}\lambda_d^{+}\hat{f}(d)\sum_{\substack{m\leq M\\ d|m}}\left\lvert \sum_{n\leq N}a_n\left(\frac{n}{m}\right)\right\rvert^2.
\end{align*}
Now, by expanding the square this becomes:
\begin{align*}
    \sum_{m\leq M}\lvert b_m\rvert f(m) \left\lvert \sum_{n\leq N}a_n\left(\frac{n}{m}\right)\right\rvert^2 \leq& \sum_{\substack{d\leq M\\ d|P(z)}}\lambda_d^{+}\hat{f}(d)\sum_{\substack{m\leq M\\ d|m}}\left(\sum_{n_1,n_2\leq N}a_{n_1}\bar{a}_{n_2}\left(\frac{n_1n_2}{m}\right)\right)\\
    \leq& \sum_{\substack{n_1,n_2\leq N\\ n_1n_2=\square}}a_{n_1}\bar{a}_{n_2} \sum_{\substack{d\leq M\\ d|P(z)}}\lambda_d^{+}\hat{f}(d) \sum_{\substack{m\leq M\\d|m\\ \mathrm{gcd}(m,n_1n_2)=1}} 1\\ &+ \sum_{\substack{n_1,n_2\leq N\\ n_1n_2\neq\square}}a_{n_1}\bar{a}_{n_2} \sum_{\substack{d\leq M\\ d|P(z)}}\lambda_d^{+}\hat{f}(d) \sum_{\substack{m\leq M\\ d|m}} \left(\frac{n_1n_2}{m}\right).
\end{align*}
We first consider the sum over $n_1n_2\neq \square$. Here we write $m=m'd$ and note that the sieve coefficents $\lambda_d^{+}$ are supported on the interval $[1,y]$. Then, using the P\'olya--Vinogradov inequality:
\begin{align*}
    \sum_{\substack{n_1,n_2\leq N\\ n_1n_2\neq\square}}a_{n_1}\bar{a}_{n_2}\sum_{\substack{d\leq M\\ d|P(z)}}\lambda_d^{+}\hat{f}(d) \sum_{\substack{m\leq M\\ d|m}} \left(\frac{n_1n_2}{m}\right) &\ll \sum_{\substack{n_1,n_2\leq N\\ n_1n_2\neq\square}} \sum_{d\leq y}\left\lvert \sum_{\substack{m'\leq M/d}} \left(\frac{n_1n_2}{m'}\right)\right\rvert\\
    &\ll y\sum_{\substack{n_1,n_2\leq N\\ n_1n_2\neq\square}}n_1^{1/2}n_2^{1/2}(\log n_1n_2)\\
    &\ll yN^3(\log N).
\end{align*}
For the sum over the squares we have
\begin{align*}
    \sum_{\substack{n_1,n_2\leq N\\ n_1n_2=\square}}a_{n_1}\bar{a}_{n_2}\sum_{\substack{d\leq M\\ p|P(z)\\ \mathrm{gcd}(d,n_1n_2)=1}}\lambda_d^{+}\hat{f}(d) \sum_{\substack{m\leq M\\d|m\\ \mathrm{gcd}(m,n_1n_2)=1}} 1 \ll& \sum_{\substack{n\leq N}}\tau(n^2)\left\lvert\sum_{\substack{d\leq M\\ d|P(z)\\ \mathrm{gcd}(d,n)=1}}\lambda_d^{+}\hat{f}(d) \sum_{\substack{m\leq M\\d|m\\ \mathrm{gcd}(m,n)=1}} 1\right\rvert.
\end{align*}
Note that, by assumption \eqref{Mertenassumption} on $f$, and the definition of $\hat{f}$, we may write,
\[
\sum_{p\leq X}\frac{\hat{f}(p)}{p} = \sum_{\substack{p\leq X}}\frac{1-f(p)}{p} = (1-\alpha)\log\log X + O(1)
\]
for $2\leq X$. Note also that $\hat{f}(p^m)=\hat{f}(p)$ for all primes $p$ and all $m\geq 1$. It follows that $\hat{f}$ satisfies the conditions of Lemma \ref{sieveapplication}. Thus, we may use this lemma to bound the inner sum by
\[
    \left\lvert\sum_{\substack{d\leq M\\ d|P(z)\\ \mathrm{gcd}(d,n)=1}}\lambda_d^{+}\hat{f}(d) \sum_{\substack{m\leq M\\d|m\\ \mathrm{gcd}(m,n)=1}} 1\right\rvert \ll_{\epsilon} \frac{\phi(n)Mg_z(n)}{n(\log M)^{1-\alpha}} + O(y\tau(n)),
\]
where we have set
\[
g_z(n) = \prod_{\substack{p|P(z)\\p|n}}\left(1-\frac{\hat{f}(p)}{p}\right)^{-1}.
\]
Substituting this into the sum over $n_1n_2=\square$ we get,
\begin{align*}
    \mathop{\sum\sum}_{\substack{n_1,n_2\leq N\\ n_1n_2=\square}}a_{n_1}\bar{a}_{n_2}\hspace{-5mm}\sum_{\substack{d\leq M\\ p|P(z)\\ \mathrm{gcd}(d,n_1n_2)=1}}\hspace{-5mm}\lambda_d^{+}\hat{f}(d) \hspace{-5mm}\sum_{\substack{m\leq M\\d|m\\ \mathrm{gcd}(m,n_1n_2)=1}} 1 \ll_{\epsilon}& \left(\frac{M}{(\log M)^{1-\alpha}}\sum_{n\leq N}\tau(n^2)g_z(n)+y\sum_{n\leq N}\tau(n^2)\tau(n)\right)\\
    \ll_{\epsilon}& \frac{MN(\log N)^2}{(\log M)^{1-\alpha}} + yN(\log N)^5.
\end{align*}
where we have used \eqref{Brun-Titchmarshformultiplicative} to obtain
\begin{align*}
     \sum_{n\leq N}\tau(n^2)g_z(n) &\ll N(\log N)^2\;\text{and}\;\sum_{n\leq N}\tau(n^2)\tau(n)\ll N(\log N)^5
\end{align*}
since $g_z(p)\leq 2$ for all primes $p$. To conclude, we now have
\begin{align*}
S(N,M)^{2} &\ll_{\epsilon} \frac{M}{(\log M)^{1-\alpha}}\left(\frac{MN(\log N)^2}{(\log M)^{1-\alpha}} + yN(\log N)^5 + yN^{3}(\log N)\right)\\
&\ll_{\epsilon} \frac{M^2N(\log N)^2}{(\log M)^{1-\alpha}(\log M)^{1-\alpha}} + \frac{yMN^{3}(\log N)}{(\log M)^{1-\alpha}}
\end{align*} 
Taking square roots and noting that $(x+y)^{1/2}\ll (x^{1/2}+y^{1/2})$ for $x,y\geq 0$ gives the result, since $y=M^{\epsilon}$.
\end{proof}

\begin{proof}[Proof of Corollary \ref{Hooleyandthelargesieve2}]
Set
\[
S_f(u,M) = \mathop{\sum\sum}_{\substack{n\leq u, m\leq M}}a_nb_mf(m)\left(\frac{n}{m}\right).
\]
Then, by partial summation in the variable $n$, we obtain
\[
\sum_{W<n\leq N}\sum_{m\leq M}\frac{a_n}{n}b_mf(m)\left(\frac{n}{m}\right) = \frac{S_f(N,M)}{N}-\frac{S_f(W,M)}{W} + \int_{W}^{N}\frac{S_f(u,M)}{u^2}\mathrm{d}u.
\]
The result follows from applying Theorem \ref{Hooleyandthelargesieve1}, computing the integral, and comparing the sizes of the corresponding terms.
\end{proof}

\section{Technical Lemmas}\label{technicalstuff}
In this section, we record and prove some technical lemmas that will be heavily used moving forward.

\subsection{Large Conductor Lemmas}
We will need the following modification to \eqref{IntroF--I}:
\begin{lemma}\label{Harmonic-Friedlander--Iwaniec}
Let $N,M\geq 2$ and $W<N,M$. Suppose $(a_n)$, $(b_m)$ are any complex sequences supported on the odd integers such that $\lvert a_n\rvert,\lvert b_m\rvert\leq 1$.
\[
\sum_{W<n\leq N}\sum_{m\leq M} \frac{a_n}{n} b_m\left(\frac{n}{m}\right) \ll \frac{M(\log 3NM)^{7/6}}{W^{1/6}}. %+ M^{1/2}N^{1/6}(\log N)^{5/6}(\log M)^{4/3}.
\]
\end{lemma}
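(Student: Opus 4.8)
The plan is to remove the harmonic factor $1/n$ by partial summation and feed the resulting partial sums into \eqref{IntroF--I}. Set $c_n=\sum_{m\le M}b_m\left(\frac{n}{m}\right)$ and $T(u)=\sum_{n\le u}a_nc_n=\mathop{\sum\sum}_{n\le u,\,m\le M}a_nb_m\left(\frac{n}{m}\right)$, so that \eqref{IntroF--I} gives $|T(u)|\ll uM\big(u^{-1/6}+M^{-1/6}\big)(\log 3uM)^{7/6}$. Abel summation in $n$ against $\phi(u)=1/u$ yields
\[
\mathop{\sum\sum}_{W<n\le N,\,m\le M}\frac{a_n}{n}b_m\left(\frac{n}{m}\right)=\frac{T(N)}{N}-\frac{T(W)}{W}+\int_W^N\frac{T(u)}{u^2}\,\mathrm{d}u ,
\]
and it remains to estimate the two boundary terms and the integral.

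For the boundary terms, since $W<N$ and $W<M$ we have $N^{-1/6},M^{-1/6}\le W^{-1/6}$ and $\log 3WM\le\log 3NM$, so $|T(N)|/N$ and $|T(W)|/W$ are each $\ll MW^{-1/6}(\log 3NM)^{7/6}$, which is the claimed bound. For the integral I would split the range at $u=M$, where the two terms of the Friedlander--Iwaniec estimate cross over. On $W<u\le\min(N,M)$ the term $u^{-1/6}$ dominates, so $|T(u)|\ll u^{5/6}M(\log 3uM)^{7/6}$ and
\[
\int_W^{\min(N,M)}\frac{|T(u)|}{u^2}\,\mathrm{d}u\ll M(\log 3NM)^{7/6}\int_W^{\infty}u^{-7/6}\,\mathrm{d}u\ll \frac{M(\log 3NM)^{7/6}}{W^{1/6}};
\]
this convergent integral produces exactly the claimed main term. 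On $M<u\le N$ (present only when $N>M$) the term $M^{-1/6}$ dominates, leaving a contribution $\ll M^{5/6}\int_M^N u^{-1}(\log 3uM)^{7/6}\,\mathrm{d}u$.

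I expect this last piece to be the main obstacle: against the weight $1/u^2$, a bound for $T(u)$ that is only linear in $u$ leaves the logarithmically divergent integral $\int_M^N u^{-1}(\log 3uM)^{7/6}\,\mathrm{d}u$, costing an extra factor of roughly $\log(N/M)$. When $N\le M$ — which is the situation in the intended applications, where the regime is $N^2\ll M$ — this range is empty and the estimate is clean. For general $N$ I would instead treat the tail $M<n\le N$ directly: dyadically decompose it, apply \eqref{IntroF--I} blockwise on $M<n\le M^2$, and on the blocks with $n>M^2$ use the elementary mean-value bound $\sum_{n\sim R}|c_n|^2\ll (R+M)M\log M$ (obtained from P\'olya--Vinogradov on $\sum_{n\sim R}\left(\frac{n}{m_1m_2}\right)$ together with the count of pairs $m_1m_2=\square$), which after Cauchy--Schwarz and the weight gives a blockwise saving $\ll M^{1/2}(\log M)^{1/2}$. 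The delicate point is then matching the exponent $7/6$ across the $O(\log M)$ dyadic blocks in $M<n\le M^2$; here one exploits $W<M$ (so that $M^{5/6}\le MW^{-1/6}$) together with crude inequalities such as $(\log M)^{1/2}\ll M^{1/3}$.
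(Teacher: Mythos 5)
Your first two paragraphs are precisely the paper's proof: the paper defines $S(u,M)=\sum_{n\le u}\sum_{m\le M}a_nb_m\left(\frac{n}{m}\right)$, applies partial summation in $n$, bounds $S(u,M)$ by \eqref{IntroF--I}, bounds the logarithms trivially by $(\log 3NM)^{7/6}$, and computes the integral; the boundary terms and the convergent $\int_W^\infty u^{-7/6}\,\mathrm{d}u$ piece are handled exactly as you do. The difficulty you raise in the range $M<u\le N$ is genuine, but the paper does not engage with it either: read literally, its computation also leaves the contribution $M^{5/6}\int_W^N u^{-1}(\log 3uM)^{7/6}\,\mathrm{d}u\ll M^{5/6}(\log 3NM)^{13/6}$, which via $M^{1/6}>W^{1/6}$ only yields the stated bound with exponent $13/6$ (equivalently, an extra $\log$) unless $N\le M$, in which case the $M^{-1/6}$ term never dominates and the clean $7/6$ bound follows. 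In the intended regime of application (the remark after the lemma: useful when $N\le M/W$, so in particular $N\le M$) your argument is therefore complete and identical to the paper's. Your proposed repair for $N>M$ indeed does not close, as you honestly flag: the $O(\log M)$ dyadic blocks in $M<n\le M^2$ each cost $\asymp M^{5/6}(\log 3NM)^{7/6}$ under \eqref{IntroF--I}, and when $W\asymp M$ the target $MW^{-1/6}(\log 3NM)^{7/6}$ is itself of size $M^{5/6}(\log 3NM)^{7/6}$, so the extra factor $\log M$ cannot be absorbed by $M^{5/6}\le MW^{-1/6}$ or by crude inequalities like $(\log M)^{1/2}\ll M^{1/3}$. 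In short, what you have identified is a gap in the lemma's stated generality (or an implicit extra logarithm in its one-line proof) rather than a defect of your reconstruction relative to the paper; for the case actually used later ($N\le M$) your proof matches the paper's.
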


\begin{proof}
    For this proof, set 
    \[
    S(u,M) = \sum_{n\leq u}\sum_{m\leq M}a_n b_m \left(\frac{n}{m}\right).
    \]
    Using partial summation in the $n$ variable we have
    \[
    \sum_{W<n\leq N}\sum_{m\leq M} \frac{a_n}{n} b_m\left(\frac{n}{m}\right) = \frac{S(N,M)}{N} - \frac{S(W,M)}{W} + \int_{W}^{N}\frac{S(u,M)}{u^2}\mathrm{d}u.
    \]
    Then bounding $S(u,M)$ using \eqref{IntroF--I}, and noting that $M,N\geq W$, we may obtain the result by trivially bounding the logarithms and computing the integral.
\end{proof}

This result is particularly useful when $N\leq \frac{M}{W}$. Next, we will need the following special case of the results in \S \ref{HooleyLargeSieve}:

\begin{lemma}\label{Hooleyandthelargesieve3}
    Let $M\geq N\geq W\geq 2$, and fix some $\epsilon>0$. For any complex sequences $a_n$,$b_m$ which are supported on the odd integers such that $\lvert a_n\rvert\leq 1$ and $\lvert b_m\rvert\leq 1$ we have:
    \[
    \sum_{W<n\leq N}\sum_{m\leq M} \frac{a_n b_m}{n\tau(m)} \left(\frac{n}{m}\right) \ll_{\epsilon} \frac{M(\log N)}{W^{1/2}(\log M)^{1/2}} + \frac{M^{1/2+\epsilon}N^{1/2}(\log N)^{1/2}}{(\log M)^{1/4}},
    \]
    where the implied constant depends at most on $\epsilon$.
\end{lemma}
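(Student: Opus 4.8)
The plan is to obtain this as the special case $f(m) = 1/\tau(m)$ of Corollary \ref{Hooleyandthelargesieve2}. First I would check that $f(m) = 1/\tau(m)$ meets the hypotheses of that corollary. It is multiplicative, with $f(p) = 1/2 \in [0,1]$ for every prime $p$, and $f(p^m) = 1/(m+1) \leq 1/2 = f(p)$ for every $m \geq 2$, so both structural conditions hold. For the Mertens-type condition, since $f(p) = 1/2$ for all $p$, Mertens' theorem gives
\[
\sum_{p \leq X}\frac{f(p)}{p} = \frac12\sum_{p \leq X}\frac1p = \frac12\log\log X + O(1),
\]
so the hypothesis holds with $\alpha = 1/2 \in (0,1)$, as required.

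Next I would simply invoke Corollary \ref{Hooleyandthelargesieve2} with this choice of $f$, noting that $b_m/\tau(m) = b_m f(m)$ with $\lvert b_m\rvert \leq 1$, to obtain
\[
\sum_{W<n\leq N}\sum_{m\leq M}\frac{a_n b_m}{n\tau(m)}\left(\frac{n}{m}\right) \ll_{\epsilon} \frac{M(\log N)}{W^{1/2}(\log M)^{(1-\alpha)}} + \frac{M^{1/2+\epsilon}N^{1/2}(\log N)^{1/2}}{(\log M)^{(1-\alpha)/2}}.
\]
Substituting $\alpha = 1/2$ gives $1-\alpha = 1/2$ and $(1-\alpha)/2 = 1/4$, which is exactly the claimed bound; the hypothesis $M \geq N \geq W \geq 2$ is carried over verbatim.

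Since this is a direct specialisation, there is no genuine obstacle here; the only points requiring care are the (routine) verification of the two structural conditions on $1/\tau$ and keeping track of the value $\alpha = 1/2$ when reading off the exponents of $\log M$. For completeness one could instead rerun the proof of Theorem \ref{Hooleyandthelargesieve1} with $f = 1/\tau$ and then apply partial summation in $n$ exactly as in the proof of Corollary \ref{Hooleyandthelargesieve2}, but quoting the corollary is cleaner and avoids repeating the large-sieve argument.
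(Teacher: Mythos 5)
Your proposal is correct and follows exactly the paper's own argument: the lemma is obtained as the special case $f=1/\tau$ of Corollary \ref{Hooleyandthelargesieve2}, with the structural conditions on $1/\tau$ and the Mertens-type condition with $\alpha=1/2$ verified just as you do. Nothing further is needed.
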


\begin{proof}
This result is a straightforward application of Corollary \ref{Hooleyandthelargesieve2}. Indeed $\frac{1}{\tau}$ is a multiplicative function satisfying all conditions of this corollary. In particular, it satisfies assumption \eqref{Mertenassumption} with $\alpha=1/2$, giving the result.
\end{proof}

This result is particularly useful in regions where $N^2\leq M$.

\subsection{Small Conductor Lemmas}
In order to deal with regions where our sums involve Jacobi symbols which have small conductors, we will require Siegel--Walfisz methods. Define
\begin{equation}\label{TheConstants}
     f_0 = \frac{1}{\sqrt{\pi}}\prod_{p\;\;\text{prime}}f_p\left(1-\frac{1}{p}\right)^{1/2}\;\;\;\text{and}\;\;\;f_p = 1+\sum_{j=1}^{\infty}\frac{1}{(j+1)p^{j}}.
\end{equation}
Our key lemma for this purpose is Lemma $5.9$ from \cite{LRS}:

\begin{lemma}\label{Siegel--Walfisz1}
    Let $r$ and $Q$ be integers such that $\mathrm{gcd}(r,Q)=1$. Let $\chi(n)$ be a character modulo $Q$. Fix any $C>0$. Then for all $X\geq 2$ we have:
    \[
    \sum_{\substack{n\leq X\\ \mathrm{gcd}(n,r)=1}}\frac{\chi(n)}{\tau(n)} = \frac{\mathds{1}(\chi=\chi_0)\mathfrak{S}_0(Qr)X}{\sqrt{\log X}}\left\{1+O\left(\frac{(\log\log3rQ)^{3/2}}{\log X}\right)\right\}+O_C\left(\frac{\tau(r)QX}{(\log X)^C}\right)
    \]
    where $\chi_0$ is the principal character modulo $Q$ and
    \[
    \mathfrak{S}_0(Qr) = \frac{f_0}{\left(\prod_{p\mid 2rQ}f_p\right)}.
    \]
    Furthermore, if $Q$ and $r$ are odd and $q\in(\Z/8\Z)^{*}$, then we have
    \[
    \sum_{\substack{n\leq X\\ \mathrm{gcd}(n,r)=1\\ n\equiv q\bmod{8}}}\frac{\chi(n)}{\tau(n)} = \frac{\mathds{1}(\chi=\chi_0)\mathfrak{S}_0(Qr)X}{\phi(8)\sqrt{\log X}}\left\{1+O\left(\frac{(\log\log3rQ)^{3/2}}{\log X}\right)\right\}+O_C\left(\frac{\tau(r)QX}{(\log X)^C}\right)
    \]
\end{lemma}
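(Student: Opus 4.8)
The first displayed asymptotic is exactly \cite[Lemma 5.9]{LRS}, so I would cite it directly; for orientation I first sketch the route one would take to prove it, and then I would deduce the congruence-restricted version. To prove the first formula from scratch, I would begin from the fact that $1/\tau$ is multiplicative, so the Dirichlet series twisted by $\chi$ and restricted to $\gcd(n,r)=1$ has an Euler product
\[
\sum_{\substack{n\geq 1\\ \gcd(n,r)=1}}\frac{\chi(n)}{\tau(n)n^s}=\prod_{p\nmid rQ}\frac{-\log\!\left(1-\chi(p)p^{-s}\right)}{\chi(p)p^{-s}};
\]
comparing local factors with those of $L(s,\chi)=\prod_{p\nmid Q}(1-\chi(p)p^{-s})^{-1}$ shows this equals $L(s,\chi)^{1/2}G_{r,Q}(s)$, where $G_{r,Q}(s)$ is holomorphic and bounded on a fixed region $\re s>1/2+\delta$, with its dependence on $r$ and $Q$ entering only through the finitely many Euler factors removed at $p\mid rQ$. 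I would then run the Selberg--Delange method: when $\chi=\chi_0$ one has $L(s,\chi_0)^{1/2}=\zeta(s)^{1/2}\prod_{p\mid Q}(1-p^{-s})^{1/2}$, which has a square-root singularity at $s=1$, and a Perron formula with a Hankel contour around $s=1$ extracts the main term $\mathfrak{S}_0(Qr)X/\sqrt{\log X}$, the portion of the contour hugging $s=1$ producing the relative error $O((\log\log 3rQ)^{3/2}/\log X)$ and pushing the rest of the contour into the Siegel--Walfisz zero-free region producing the $O_C(\tau(r)QX/(\log X)^C)$ term. When $\chi\neq\chi_0$, $L(s,\chi)$ is holomorphic and non-vanishing in that region, so $L(s,\chi)^{1/2}$ is holomorphic there, there is no pole to pick up, and only the $O_C(\tau(r)QX/(\log X)^C)$ term survives --- this is the origin of the indicator $\mathds{1}(\chi=\chi_0)$.

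Granting the first formula, I would obtain the second by orthogonality of Dirichlet characters modulo $8$. Since $Q$ and $r$ are odd we have $\gcd(8,Qr)=1$, and writing $\mathds{1}(n\equiv q\bmod 8)=\phi(8)^{-1}\sum_{\psi\bmod 8}\overline{\psi(q)}\psi(n)$ gives
\[
\sum_{\substack{n\leq X\\ \gcd(n,r)=1\\ n\equiv q\bmod 8}}\frac{\chi(n)}{\tau(n)}=\frac{1}{\phi(8)}\sum_{\psi\bmod 8}\overline{\psi(q)}\sum_{\substack{n\leq X\\ \gcd(n,r)=1}}\frac{(\chi\psi)(n)}{\tau(n)},
\]
where each $\chi\psi$ is a character modulo $8Q$. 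Applying the first part of the lemma with modulus $8Q$ to every inner sum, a main term arises exactly when $\chi\psi$ is principal modulo $8Q$, i.e.\ when $\chi=\chi_0$ and $\psi$ is principal; for that term $\overline{\psi(q)}=1$, and since the set of primes dividing $2\cdot 8Q\cdot r$ coincides with the set dividing $2Qr$, we get $\mathfrak{S}_0((8Q)r)=\mathfrak{S}_0(Qr)$, so the main contribution is $\mathds{1}(\chi=\chi_0)\mathfrak{S}_0(Qr)X/(\phi(8)\sqrt{\log X})$ times the bracket. The remaining $\psi$ contribute only $O_C(\tau(r)\cdot 8Q\cdot X/(\log X)^C)=O_C(\tau(r)QX/(\log X)^C)$, which is the claimed error.

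I expect the only real difficulty to be the uniformity in the moduli: tracking the dependence of $G_{r,Q}(s)$ and of the contour estimates on $r$ and $Q$ precisely enough to land the errors in exactly the shapes $(\log\log 3rQ)^{3/2}/\log X$ and $\tau(r)QX/(\log X)^C$, and defining a consistent branch of $L(s,\chi)^{1/2}$ on the zero-free region while allowing for a possible Siegel zero (which is what makes $C$, and the implied constant, ineffective). By contrast, the deduction of the congruence-restricted statement from the unrestricted one is entirely routine once one uses $\gcd(8,Qr)=1$.
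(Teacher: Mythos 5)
Your proposal is correct and follows essentially the same route as the paper: the first formula is taken from \cite[Lemma 5.9]{LRS} (the paper likewise only sketches the underlying contour/Selberg--Delange argument by referring to \cite[Lemma 1]{F--I}), and the congruence-restricted version is deduced in exactly the same way, by orthogonality of the Dirichlet characters modulo $8$ applied to the inner sum with modulus $8Q$.
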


\begin{proof}
We may use the same contour argument performed for Lemma $1$ in \cite{F--I}, along with the fact that, if $\chi\neq\chi_0$, the Dirichlet series
\[
\sum_{\substack{n\in\N \\ \mathrm{gcd}(n,r)=1}}\frac{\chi(n)}{\tau(n)n^s}
\]
is holomorphic for $\Re(s)>1-\frac{c(\epsilon)}{Q^{\epsilon}(\log \Im(s))}$ for any $\epsilon>0$, where $c(\epsilon)$ is some positive constant depending only on $\epsilon$. For full details see \cite[Lemma 1]{F--I}. For the second part we have
\[
\sum_{\substack{n\leq X\\ \mathrm{gcd}(n,r)=1\\ n\equiv q\bmod{8}}}\frac{\chi(n)}{\tau(n)} = \frac{1}{\phi(8)}\sum_{\substack{\chi' \text{char.}\\ \bmod{8}}}\overline{\chi'}(q)\sum_{\substack{n\leq X\\ \mathrm{gcd}(n,r)=1}}\frac{\chi(n)\chi'(n)}{\tau(n)},
\]
and so this follows via an application of the first part.
\end{proof}

This result with $\chi=\chi_0$ is used to obtain the main term in Section $4$ of \cite{LRS}. We shall do the same, however, due to difficulties arising from our height conditions, we will also require the use of this result to obtain part of our error term (see Sections \ref{hyperbolicanalysis1} and \ref{hyperbolicanalysis2}).

\subsection{The hyperbola method} Finally, we note the following variant of the hyperbola method, which will allow us to handle the hyperbolic height condition in our character sums: 

\begin{lemma}\label{hyperbolamethod}
    Let $X\geq 2$ and let $2\leq Y\leq X^{1/2}$. Fix some constants $c_0,c_1,c_2,c_3\in\N$. Then for any functions $g_0,g_1,g_2,g_3:\N\rightarrow\C$, we have
    \begin{align*}    \mathop{\sum\sum\sum\sum}_{\substack{n_0,n_1,n_2,n_3\in\N^4\\ \|n_0c_0,n_1c_1\|\cdot\|n_2c_2,n_3c_3\|\leq X}} \left(\prod_{i=0}^3 g_i(n_i)\right) =& \mathop{\sum\sum\sum\sum}_{\substack{\|n_0c_0,n_1c_1\|\leq Y\\ \|n_2c_2,n_3c_3\|\leq X/\|n_0c_0,n_1c_1\|}} \left(\prod_{i=0}^3 g_i(n_i)\right) \\+& \mathop{\sum\sum\sum\sum}_{\substack{\|n_2c_2,n_3c_3\|\leq X/Y\\ \|n_0c_0,n_1c_1\|\leq X/\|n_2c_2,n_3c_3\|}} \left(\prod_{i=0}^3 g_i(n_i)\right) \\ -&  \mathop{\sum\sum\sum\sum}_{\substack{\|n_0c_0,n_1c_1\|\leq Y\\ \|n_2c_2,n_3c_3\|\leq X/Y}} \left(\prod_{i=0}^3 g_i(n_i)\right).
    \end{align*}
\end{lemma}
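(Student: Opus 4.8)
The plan is to reduce the four-variable hyperbolic condition to a two-variable one by grouping the variables into the two ``halves'' $\|n_0c_0,n_1c_1\|$ and $\|n_2c_2,n_3c_3\|$, and then apply the classical Dirichlet hyperbola identity to these two quantities. Concretely, write $A = \|n_0c_0,n_1c_1\|$ and $C = \|n_2c_2,n_3c_3\|$, so the left-hand side is $\sum_{A \cdot C \leq X} \left(\prod_{i=0}^3 g_i(n_i)\right)$, where the sum is over $(n_0,n_1,n_2,n_3)\in\N^4$ and we think of the product weight as being attached to each tuple. The key elementary fact is that for positive reals (here positive integers bounded below by the $c_i$'s, but the inequality manipulation only uses positivity) and any threshold $Y$ with $2 \leq Y \leq X^{1/2}$, the region $\{AC \leq X\}$ decomposes as
\[
\{AC\leq X\} = \{A\leq Y,\ C\leq X/A\}\ \cup\ \{C\leq X/Y,\ A\leq X/C\},
\]
where the two sets on the right overlap exactly in $\{A\leq Y,\ C\leq X/Y\}$. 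This is the standard inclusion--exclusion underlying the hyperbola method: a point with $AC\leq X$ lies in the first set iff $A\leq Y$, in the second iff $C\leq X/Y$ (using $AC\leq X$ to see $A\leq X/C$ and $C\leq X/A$ respectively), and since $AC\leq X$ with $Y\leq X^{1/2}$ forces at least one of $A\leq Y$ or $C\leq X/Y$ (otherwise $AC > Y\cdot(X/Y) = X$), every point of the left region is covered; the intersection of the two sets is precisely $\{A\leq Y\}\cap\{C\leq X/Y\}$, and on this intersection both conditions $C\leq X/A$ and $A\leq X/C$ are automatic since $AC \leq Y\cdot (X/Y) = X$.

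First I would make this set-theoretic identity precise as an identity of indicator functions: for all $(n_0,n_1,n_2,n_3)$,
\[
\mathds{1}(AC\leq X) = \mathds{1}(A\leq Y)\mathds{1}(C\leq X/A) + \mathds{1}(C\leq X/Y)\mathds{1}(A\leq X/C) - \mathds{1}(A\leq Y)\mathds{1}(C\leq X/Y).
\]
Then I would multiply both sides by $\prod_{i=0}^3 g_i(n_i)$ and sum over all $(n_0,n_1,n_2,n_3)\in\N^4$. Since $A$ depends only on $(n_0,n_1)$ and $C$ only on $(n_2,n_3)$, and there are only finitely many terms with nonzero indicator (the constraint $AC\leq X$ bounds everything), the sum of each of the three pieces converges absolutely and equals exactly the three sums on the right-hand side of the statement: the first piece gives $\sum_{\|n_0c_0,n_1c_1\|\leq Y,\ \|n_2c_2,n_3c_3\|\leq X/\|n_0c_0,n_1c_1\|}$, the second gives $\sum_{\|n_2c_2,n_3c_3\|\leq X/Y,\ \|n_0c_0,n_1c_1\|\leq X/\|n_2c_2,n_3c_3\|}$, and the third gives $\sum_{\|n_0c_0,n_1c_1\|\leq Y,\ \|n_2c_2,n_3c_3\|\leq X/Y}$. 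The identity follows.

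Honestly, there is no real obstacle here: the only point requiring the slightest care is verifying the indicator identity, which amounts to the three-case check above (each point with $AC\leq X$ contributes $1+0-0$, $0+1-0$, or $1+1-1$ depending on which of $A\leq Y$, $C\leq X/Y$ hold, and at least one holds; points with $AC>X$ contribute $0+0-0$ since $A\leq Y$ and $C\leq X/Y$ cannot both hold). The hypothesis $Y\leq X^{1/2}$ is exactly what guarantees ``at least one holds'', and the hypothesis $Y\geq 2$ (together with $c_i\geq 1$) just ensures the ranges are nonempty/nondegenerate. I would present this as a one-paragraph proof: state the indicator identity, justify it by the case analysis on whether $A\leq Y$, note convergence/finiteness is immediate from $AC\leq X$, and conclude by summing against the weight $\prod_i g_i(n_i)$.
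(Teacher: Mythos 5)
Your proof is correct, and since the paper states Lemma \ref{hyperbolamethod} without proof (treating it as a standard variant of the Dirichlet hyperbola method), your indicator-function inclusion--exclusion on $A=\|n_0c_0,n_1c_1\|$ and $C=\|n_2c_2,n_3c_3\|$, verified by the three-case check and then summed against the weight $\prod_i g_i(n_i)$ over the finitely many nonzero terms, is exactly the intended argument. One small remark: the hypothesis $Y\leq X^{1/2}$ is not actually needed for the identity (the covering step only uses $Y\cdot(X/Y)=X$), so it is a normalisation convenient for the applications rather than the reason "at least one condition holds".
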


\section{Applications of the neutraliser large sieve}
In this section, we provide the proofs of Corollaries \ref{LfunctionweirdaverageIntro} and \ref{AveragingovermediumconductorsIntro}.

\subsection{A general lemma}
For an odd integer $m$ we will denote by $\psi_{m}$ either the Jacobi symbol $\left(\frac{\cdot}{m}\right)$ or $\left(\frac{m}{\cdot}\right)$. Note that, for all $n$ odd, we may use a standard quadratic reciprocity to interchange between the two choices of $\psi_{m}(n)$. We will prove the following:
\begin{lemma}\label{Lfunctionweirdaverage}
    Let $\chi$ be a character modulo $q$, and $m_1$ be an odd integer in $[1,X]$. Suppose that $f$ is a multiplicative function satisfying the conditions of Theorem \ref{Hooleyandthelargesieve1}. Then for all $X\geq 3$ we have
    \[
    \mathop{\sum}_{\substack{1<m\leq X}}\mu^2(qmm_1)f(m)L(1,\chi\cdot\psi_{m m_1}) \ll_q \frac{X}{(\log X)^{1-\alpha}}
    \]
    where $\alpha$ is as defined by \eqref{Mertenassumption}and the implied constant depends at most on $q$.
\end{lemma}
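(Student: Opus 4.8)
The plan is to expand the $L$-function as a Dirichlet series, truncate it, and then apply the improved large sieve of Corollary~\ref{Hooleyandthelargesieve2} (or directly Theorem~\ref{Hooleyandthelargesieve1}) to the resulting character sum. First I would use the standard hyperbola-type manipulation: for a non-principal character $\chi\cdot\psi_{mm_1}$ modulo $q$ times the conductor of $\psi_{mm_1}$, one has $L(1,\chi\cdot\psi_{mm_1}) = \sum_{n\geq 1}\frac{(\chi\cdot\psi_{mm_1})(n)}{n}$, and by partial summation together with the Pólya--Vinogradov bound the tail $\sum_{n>T}$ contributes $O\!\big((qmm_1)^{1/2+\epsilon}/T\big)$. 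Taking $T$ a suitable power of $X$ (say $T=X^{10}$) makes the tail negligible after summing trivially over $m\leq X$. So it suffices to bound
\[
\mathop{\sum\sum}_{\substack{1<m\leq X\\ n\leq T}} \mu^2(qmm_1)f(m)\,\frac{\chi(n)}{n}\,\psi_{mm_1}(n).
\]

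Next I would split this according to whether $n$ is a small or large conductor relative to $m$, and handle the ``$n$ small'' part directly. The natural dichotomy is whether $n \leq M_0$ for some threshold like $M_0 = \exp((\log X)^{1/2})$ or even just $n\leq X^{\delta}$; for the very small $n$, one sums over $m$ first using quadratic reciprocity to view $\psi_{mm_1}(n)$ as (up to sign and congruence conditions mod $8$) a character in $m$ of modulus $\leq 4n$, and applies the Shiu-type bound \eqref{ShiuIntro} / \eqref{Brun-Titchmarshformultiplicative2} to the sum $\sum_{m\leq X}\mu^2(qmm_1)f(m)\psi_n(m)$; since $f$ is nonnegative this just costs a trivial bound of $O(X/(\log X)^{1-\alpha})$ per $n$, but then one still has $\sum_n 1/n$ which is only $\log$, so this crude approach is wasteful and I would instead restrict the ``trivial'' treatment to genuinely tiny ranges of $n$ and feed the bulk into the sieve bound. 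The cleanest route is: for $n$ in dyadic blocks $N_0 < n \leq 2N_0$ with $N_0$ ranging over powers of $2$ up to $T$, apply Corollary~\ref{Hooleyandthelargesieve2} with the roles $W=N_0$, $N=2N_0$, $M=X$, harmonic weight $a_n = \chi(n)/n \cdot (\text{reciprocity sign})$ restricted to that block (so $|a_n|\leq 1$), and $b_m = \mu^2(qmm_1)$ together with the multiplicative weight $f(m)$. Corollary~\ref{Hooleyandthelargesieve2} then gives each block a bound of
\[
\ll_{\epsilon} \frac{X\log(2N_0)}{N_0^{1/2}(\log X)^{1-\alpha}} + \frac{X^{1/2+\epsilon}(2N_0)^{1/2}(\log 2N_0)^{1/2}}{(\log X)^{(1-\alpha)/2}},
\]
and summing the geometric-type series over dyadic $N_0$ with $N_0 \geq N_{\min}$ for a suitable lower cutoff $N_{\min}$ (say $N_{\min}$ a small power of $\log X$, so that $(\log X)^{1-\alpha}/N_{\min}^{1/2}$ is controlled) gives the first term $\ll X/(\log X)^{1-\alpha}$; the tiny-$n$ range $n \leq N_{\min}$ is handled by the trivial bound described above, contributing $\ll (\log N_{\min})\cdot X/(\log X)^{1-\alpha} = o\big(X/(\log X)^{1-\alpha}\big)$ after choosing $N_{\min}$ a fixed power of $\log\log X$. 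One checks the second term is also $\ll X/(\log X)^{1-\alpha}$ in the relevant range since $N_0 \leq T = X^{O(1)}$ and $X^{1/2+\epsilon}N_0^{1/2} \ll X^{1-\eta}$ once the exponents are balanced (here one wants $N_0$ bounded by a small power of $X$, which is why the tail truncation should be taken with $T$ a power of $X$ smaller than $X$, e.g.\ $T = X^{1/10}$ — and then re-examine the Pólya--Vinogradov tail to check it still loses only a power of $X^{\epsilon}$, which it does after summing trivially over $m$).

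The point requiring care — and the main obstacle — is the interaction between the conductor of $\psi_{mm_1}$ and the character $\chi$ and the divisor condition $\mu^2(qmm_1)$: one needs $\chi\cdot\psi_{mm_1}$ to be non-principal (or at least to control the contribution of those $m$ for which it is principal) in order to have convergence of the $L$-series and the Pólya--Vinogradov saving. The principal case forces $mm_1$ to be (up to the part dividing $q$) a perfect square, and since $m_1$ is fixed this pins $m$ down to $O(X^{1/2})$ values; summing $f(m)$ over those and using $|L(1,\chi_0)|\ll \log q$ gives a contribution $\ll_q X^{1/2}\log q \ll_q X/(\log X)^{1-\alpha}$, so it is harmless, but it must be separated out cleanly at the start. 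A secondary technical nuisance is that in Corollary~\ref{Hooleyandthelargesieve2} the hypothesis is $0 < \alpha < 1$ strictly, whereas Lemma~\ref{Lfunctionweirdaverage} allows $\alpha = 1$; in the boundary case $\alpha=1$ the target bound $X/(\log X)^{1-\alpha} = X$ is trivial (indeed $\sum_{m\leq X}f(m)L(1,\cdots) \ll X\log q$ follows from $|L(1,\cdot)|\ll\log(qmm_1)\ll_q \log X$ and $f(m)\leq 1$), so one disposes of that case separately and applies the corollary only for $\alpha<1$. Assembling these pieces — tail truncation, principal-character removal, dyadic decomposition with Corollary~\ref{Hooleyandthelargesieve2}, tiny-$n$ trivial bound, and the $\alpha=1$ degenerate case — yields the claimed $O_q\big(X/(\log X)^{1-\alpha}\big)$.
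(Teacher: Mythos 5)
Your overall strategy (truncate the $L$-series, then feed the skewed range into the neutralised large sieve) is the same as the paper's, but there is a genuine gap in the range where $n$ is comparable to or larger than $X$, and your proposed fix makes things worse. Since $mm_1$ can be as large as $X^2$, the P\'olya--Vinogradov tail per fixed $m$ is $\ll_q (mm_1)^{1/2}\log(qmm_1)/T \ll_q X\log X/T$, so with $T=X^{1/10}$ it is $\asymp X^{9/10}\log X$ per $m$ and $\asymp X^{19/10}\log X$ after the trivial sum over $m\leq X$ --- not an $X^{\epsilon}$-sized loss as you assert; the truncation point must be at least of size about $X(\log X)^{2}$. On the other hand, your dyadic application of Corollary \ref{Hooleyandthelargesieve2} requires $N=2N_0\leq M=X$ and its second term only sums acceptably for $N_0\ll X^{1-2\epsilon}$ (up to logarithms), as you yourself observe. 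Hence the blocks with $X^{1-2\epsilon}\ll N_0\ll X^{2}$ are covered by neither device, and no amount of ``balancing exponents'' closes this: you need a second character-sum input with power saving in the balanced range. This is exactly what the paper does: it truncates at $X^2$, performs partial summation in $n$ to reach $S(X,t)=\sum_{m\leq X}\sum_{n\leq t}\mu^2(2mm_1)f(m)\chi(n)\psi_{m_1}(n)\left(\frac{n}{m}\right)$, applies Corollary \ref{Hooleyandthelargesieve2} only for $t\leq X^{1/2}$, and bounds the range $X^{1/2}\leq t\leq X^2$ by the Friedlander--Iwaniec estimate \eqref{IntroF--I}, which gives $X^{5/6+o(1)}$ there. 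Without \eqref{IntroF--I} (or Heath-Brown's bound \eqref{IntroHBbound}) your argument does not go through.

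Two smaller inaccuracies. First, your tiny-$n$ range: with the trivial Shiu bound per $n$, its contribution is $\asymp(\log N_{\min})\,X/(\log X)^{1-\alpha}$, which is \emph{not} $o\big(X/(\log X)^{1-\alpha}\big)$ when $N_{\min}$ is a power of $\log\log X$; this would already violate the stated bound. It is harmless only because the cutoff is unnecessary: the first term of Corollary \ref{Hooleyandthelargesieve2} summed over all dyadic blocks from $W=2$ upward converges, since $\sum_{j\geq 1}(j+1)2^{-j/2}<\infty$. Second, the $\alpha=1$ case is not disposed of by $\lvert L(1,\cdot)\rvert\ll_q\log X$ and $f\leq 1$: that yields $X\log X$, not the required $X$. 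Finally, the principal-character separation is vacuous here: $\mu^2(qmm_1)=1$ and $m>1$ force $mm_1>1$ squarefree and coprime to $q$, so $\chi\cdot\psi_{mm_1}$ is automatically non-principal --- harmless, but not a needed step.
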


\begin{proof}
    We begin by splitting the $L$-function in two:
    \[
    L(1,\chi\cdot\psi_{mm_1}) = \sum_{n=1}^{ X^2}\frac{\chi(n)\psi_{mm_1}(n)}{n} + \sum_{n> X^2}\frac{\chi(n)\psi_{mm_1}(n)}{n}.
    \]
    Using partial summation and the P\'olya--Vinogradov inequality \cite{Iwaniec--Kowalski}, the tail sum may be seen to be
    \[
    \ll \frac{(qmm_1)^{1/2}\log(mm_1)}{X^2} \ll \frac{q^{1/2}(\log X)}{X}.
    \]
    Summing trivially over $m$ will give $O(\log X)$ which is sufficient. The expression we have left is
    \begin{equation}\label{SmallpartofLfunction}    \sum_{\substack{m\leq X}}\sum_{n\leq X^2} \mu^{2}(2mm_1)f(m)\frac{\chi(n)\psi_{m_1}(n)}{n}\left(\frac{n}{m}\right).
    \end{equation}
    We will make use of the double oscillation of the character $\left(\frac{n}{m}\right)$ in the variables $m$ and $n$. Note that we have made one choice of definition for $\psi_{mm_1}$ - by using quadratic reciprocity, we may interchange between the two at this stage. By partial summation we get
    \begin{equation}\label{doubleoscillationafterpartialsummation}
    \sum_{\substack{m\leq X}}\sum_{n\leq \lfloor X^2\rfloor} \mu^{2}(2mm_1)f(m)\frac{\chi(n)\psi_{m_1}(n)}{n}\left(\frac{n}{m}\right)\ll \frac{S(X,X^2)}{X^2} + \int_{2}^{X^2}\frac{S(X,t)}{t^2}dt,
    \end{equation}
    where
    \[
    S(X,t) = \sum_{\substack{m\leq X}}\sum_{n\leq t} \mu^{2}(2mm_1)f(m)\chi(n)\psi_{m_1}(n)\left(\frac{n}{m}\right).
    \] 
    Using \eqref{IntroF--I} we obtain
    \[
    \frac{S(X,X^2)}{X^2} \ll \frac{X^3}{X^2}\left(X^{-1/6}+X^{-1/3}\right)(\log 3X)^{7/6}\ll X^{5/6}(\log 3X)^{7/6}.
    \]
    The integral equals:
    \[
    \int_{2}^{X^2}\frac{S(X,t)}{t^2}dt = \int_{2}^{X^{1/2}}\frac{S(X,t)}{t^2}dt + \int_{X^{1/2}}^{X^2}\frac{S(X,t)}{t^2}dt.
    \]
    In the first range we apply Corollary \ref{Hooleyandthelargesieve2} with $\epsilon=1/6$, $N=X$ and $M=t$ as $t^2\ll X$. In the second range we once more apply \eqref{IntroF--I}. The integral therefore becomes bounded by:
    \begin{align*}
    &\ll \int_{2}^{X^{1/2}}\left(\frac{X(\log t)}{t^{3/2}(\log X)^{1-\alpha}}+\frac{X^{1/3}(\log t)^{1/2}}{t^{1/2}(\log X)^{(1-\alpha)/2}}\right)dt + \int_{X^{1/2}}^{X^2}\left(\frac{X}{t^{7/6}}+\frac{X^{5/6}}{t}\right)(\log 3X)^{7/6}dt,
    \end{align*}
    which is $O(X(\log X)^{-(1-\alpha)})$.
\end{proof}

Corollary \ref{LfunctionweirdaverageIntro} now follows as a direct consequence of Lemma \ref{Lfunctionweirdaverage} with $m_1=1$, $f=\frac{1}{\tau}$ and $q=1$ (meaning $\chi(n)=1$ is for all $n\in\N$).

\subsection{Proof of Corollary \ref{AveragingovermediumconductorsIntro}}
For convenience we will write $Z=(\log X)^{C_1}$. Set
    \[
    T(X) = \sum_{\substack{(\log X)^{3C_1/4}<n_0c_0\leq X^{1/2}}}\frac{1}{n_0^2c_0^2\tau(n_0)}\left\lvert\mathop{\sum\sum}_{\substack{(C_1\log \log X)^{C_2}< m \leq (\log X)^{2C_1}\\ n_1c_1\leq n_0c_0}}\frac{a_m b_{n_1}}{m\tau(n_1)}\left(\frac{n_1}{m}\right)\right\rvert.
    \]
    Then we have
    \[
    T(X) \leq T_1(X) + T_2(X) + T_3(X)
    \]
    where
    \begin{align*}
        T_1(X) &= \sum_{\substack{Z^{10}<n_0\leq X^{1/2}/c_0}}\frac{1}{n_0^2c_0^2\tau(n_0)}\left\lvert\mathop{\sum\sum}_{\substack{(\log Z)^{C_2}< m \leq Z^{2}\\ n_1c_1\leq n_0c_0}}\frac{a_m b_{n_1}}{m\tau(n_1)}\left(\frac{n_1}{m}\right)\right\rvert,\\
        T_2(X) &= \sum_{\substack{Z^{3/4}/c_0<n_0\leq Z^{10
        }}}\frac{1}{n_0^2c_0^2\tau(n_0)}\left\lvert\mathop{\sum\sum}_{\substack{(\log Z)^{C_2}< m \leq Z^{1/10}\\n_1c_1\leq n_0c_0}}\frac{a_m b_{n_1}}{m\tau(n_1)}\left(\frac{n_1}{m}\right)\right\rvert,\\
        T_3(X) &= \sum_{\substack{Z^{3/4}/c_0<n_0\leq Z^{10}}}\frac{1}{n_0^2c_0^2\tau(n_0)}\left\lvert\mathop{\sum\sum}_{\substack{Z^{1/10}< m \leq Z^{2}\\n_1c_1\leq n_0c_0}}\frac{a_m b_{n_1}}{m\tau(n_1)}\left(\frac{n_1}{m}\right)\right\rvert.
    \end{align*}
    For $T_1(X)$ we apply Corollary \ref{Hooleyandthelargesieve3} with $\epsilon = \frac{1}{10}$. Then $T_1(X)$ is
    \begin{align*}
    \ll& \sum_{\substack{Z^{10}<n_0\leq X^{1/2}/c_0}}\frac{1}{n_0^2c_0^2\tau(n_0)} \left(\frac{n_0c_0(\log Z)}{c_1(\log Z)^{C_2/2}(\log n_0c_0/c_1)^{1/2}} + \frac{(n_0c_0)^{3/5}Z(\log Z)^{1/2}}{c_1^{3/5}(\log n_0c_0/c_1)^{1/4}}\right),\\
    \ll& \sum_{\substack{Z^{10}<n_0\leq X^{1/2}/c_0}}\left(\frac{(\log Z)}{n_0c_0c_1(\log Z)^{C_2/2}(\log n_0c_0/c_1)^{1/2}\tau(n_0)} + \frac{Z(\log Z)^{1/2}}{(n_0c_0)^{7/5}c_1^{3/5}(\log n_0c_0/c_1)^{1/4}\tau(n_0)}\right).
    \end{align*}
    Since $n_0c_0/c_1>Z^{10}c_0/c_1>2$, the second sum is
    \[
    \ll \frac{Z(\log Z)^{1/2}}{Z^{4}c_0^{7/5}c_1^{3/5}} \ll \frac{(\log Z)^{1/2}}{Z^{3}c_0^{7/5}c_1^{3/5}}.
    \]
    Using similar methods used to compute $M(X)$ in Lemma \ref{maintermlemma1}, we can bound the first sum by
    \begin{align*}
    \ll \sum_{\substack{n_0\leq X^{1/2}}}\frac{1}{n_0(\log n_0c_0/c_1)^{1/2}\tau(n_0)}\ll \log \log X.
    \end{align*}
    Substituting the previous two bounds into $T_1(X)$ will give
    \begin{align*}
    T_1(X) &\ll_{C_2} \frac{(\log \log X)}{c_0c_1(\log Z)^{C_2/2-1}} + \frac{(\log Z)^{1/2}}{Z^{3}c_0^{7/5}c_1^{3/5}} \ll_{C_2} \frac{(\log \log X)}{c_0c_1(\log Z)^{C_2/2-1}},
    \end{align*}
    using the fact that $Z>c_1^{2/5}$ to determine the dependence on $c_0$ and $c_1$. Using the same approach we can obtain the same bound for $T_2(X)$. Finally we deal with $T_3(X)$. Here it is better to apply Lemma \ref{Harmonic-Friedlander--Iwaniec} since the ranges of the inner double sum are of comparable size. Doing this we obtain,
    \begin{align*}
    T_3(X) &\ll \sum_{\substack{Z^{3/4}/c_0<n_0\leq Z^{10}}}\frac{(\log Z)^{7/6}}{n_0c_0c_1Z^{1/60}\tau(n_0)}%+ \frac{Z^{1/3}(\log Z)^{11/6}}{n_0^{3/2}c_0^{3/2}c_1^{1/2}\tau(n_0)}\right)\\
    \ll \frac{(\log Z)^{13/6}}{c_0c_1Z^{1/60}}, %+ \frac{(\log Z)^{11/6}}{Z^{1/24}c_0c_1^{1/2}}.
    \end{align*}
    which is sufficient.% by noting that $Z^{1/48}>c_1^{1/2}$.

\section{Character sums over hyperbolic regions I}\label{hyperbolicanalysis1}
In this section, we evaluate bounds for sums of a more general form to \eqref{Introanalysissum1}. Consider the general four variable sum below:
\begin{equation}\label{Generalsumtype}
\mathop{\sum\sum\sum\sum}_{\substack{\|n_0c_0,n_1c_1\|\cdot\|n_2c_2,n_3c_3\|\leq X\\ \mathrm{gcd}(n_i,2r_i)=1\;\forall\;0\leq i\leq 3\\ n_i\equiv q_i\bmod{8}\;\forall\;0\leq i\leq 3}} \frac{\chi_0(n_0)\chi_1(n_1)\chi_2(n_2)\chi_3(n_3)}{\tau(n_0)\tau(n_1)\tau(n_2)\tau(n_3)}
\end{equation}
where the sum is over $\b{n}\in\N^4$, $c_i,r_i,q_i\in\N$ are fixed, odd constants for each $0\leq i\leq 3$ and $\chi_i$ are some characters. Our methods vary depending on which of the characters are principal. In particular, we will have three cases to consider:
\begin{itemize}
    \item[(a)] Main Term: each $\chi_i$ is principal;
    \item[(b)] Small Conductor - Symmetric Hyperbola Method: $\chi_0$ or $\chi_1$ is non-principal and $\chi_2$ or $\chi_3$ is non-principal;
    \item[(c)] Small Conductor - Asymmetric Hyperbola Method: $\chi_0$ or $\chi_1$ is non-principal but $\chi_2$ and $\chi_3$ are principal or vice versa.
\end{itemize}
Each case will be handled using Lemmas \ref{Siegel--Walfisz1} and \ref{hyperbolamethod} - for case (c), we will also require the use of Corollary \ref{AveragingovermediumconductorsIntro}. In cases (b) and (c), we will also provide results that average over the conductors of the characters.

\subsection{Main Term}
We first provide asymptotics for  \eqref{Generalsumtype} when all of the $\chi_i$ are principal. First we will require some preliminary lemmas:

\begin{lemma}\label{maintermlemma1}
    Let $X\geq 3$, $C_1,C_2>0$ and take any $q_0,q_1\in\left(\Z/8\Z\right)^*$ . Then for any odd integers $1\leq r_0,r_1\leq (\log X)^{C_1}$ and any fixed $1\leq c_0,c_1\leq (\log X)^{C_2}$:
\[
\mathop{\sum\sum}_{\substack{\|n_0c_0,n_1c_1\|\leq X\\ \mathrm{gcd}(n_i,r_i)=1\;\forall i\in\{0,1\}\\ n_i\equiv q_i\bmod{8}\;\forall i\in\{0,1\}}} \hspace{-6pt}\frac{1}{\|n_0c_0,n_1c_1\|^2\tau(n_0)\tau(n_1)} = \frac{\mathfrak{S}_1(r_0,r_1)\log\log X}{c_0c_1} + O_{C_1,C_2}\left(\frac{\tau(r_0)\tau(r_1)\sqrt{\log \log X}}{c_0c_1}\right),
\]
where the implied constant depends only on $C_1$ and $C_2$ and for any odd $r_0$, $r_1$ we have 
\[
\mathfrak{S}_1(r_0,r_1) = \frac{2f_0^2}{\phi(8)^2\left(\prod_{p|2r_0}f_p\right)\left(\prod_{p|2r_1} f_p\right)}
\]
and $f_0$, $f_p$ are as defined in \eqref{TheConstants}.
\end{lemma}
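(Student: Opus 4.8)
The plan is to reduce the two-dimensional sum with the hyperbolic weight $\|n_0c_0,n_1c_1\|^{-2}$ to one-dimensional sums of $1/\tau$ in arithmetic progressions, which are governed by Lemma \ref{Siegel--Walfisz1}. First I would apply the hyperbola method (Lemma \ref{hyperbolamethod} specialised to two variables, or rather the classical two-variable version) to split the region $\|n_0c_0,n_1c_1\|\leq X$ at a parameter $Y=X^{1/2}$: the condition $\|n_0c_0,n_1c_1\|\leq X$ decomposes into the piece $n_0c_0\leq n_1c_1$ with $n_1c_1\leq X$, the symmetric piece $n_1c_1<n_0c_0\leq X$, minus the overlap. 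On the piece where $n_1c_1$ is the larger coordinate we have $\|n_0c_0,n_1c_1\|=n_1c_1$, so the summand becomes $\frac{1}{(n_1c_1)^2\tau(n_0)\tau(n_1)}$ and the inner sum over $n_0\leq n_1c_1/c_0$ (with $\gcd(n_0,r_0)=1$, $n_0\equiv q_0\bmod 8$) is handled by the second displayed formula in Lemma \ref{Siegel--Walfisz1}, giving a main term $\frac{\mathfrak{S}_0(r_0)}{\phi(8)}\cdot\frac{n_1c_1/c_0}{\sqrt{\log(n_1c_1/c_0)}}$ plus the power-saving error $O_C\big(\tau(r_0)r_0 (n_1c_1/c_0)(\log)^{-C}\big)$.

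Next I would substitute this into the outer sum over $n_1$. The error term, after dividing by $(n_1c_1)^2$ and summing $\frac{1}{n_1c_1\tau(n_1)}$ over $n_1\leq X/c_1$, contributes $O_C\big(\tau(r_0)r_0 c_0^{-1} c_1^{-1}(\log X)^{-C+O(1)}\big)$, which since $r_0\leq(\log X)^{C_1}$ is acceptably small on choosing $C$ large in terms of $C_1$ (note the $\log$-power loss from summing $1/(n_1\tau(n_1))\ll \sqrt{\log X}$ must be absorbed). The main term gives $\frac{\mathfrak{S}_0(r_0)}{\phi(8)c_0}\sum_{n_1\leq X/c_1}\frac{1}{n_1c_1\sqrt{\log(n_1c_1/c_0)}\,\tau(n_1)}$, and here I would apply partial summation together with Lemma \ref{Siegel--Walfisz1} again (now in the $n_1$ variable, with character $\chi_0$ modulo $8$ and coprimality to $r_1$) to evaluate $\sum_{n_1\leq t}\frac{1}{\tau(n_1)}\mathds{1}(\gcd(n_1,r_1)=1,\ n_1\equiv q_1\bmod 8)$ as $\frac{\mathfrak{S}_0(r_1)}{\phi(8)}\frac{t}{\sqrt{\log t}}(1+o(1))$. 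The resulting one-dimensional integral $\int^{X}\frac{dt}{t\log t}\asymp\log\log X$ produces the $\log\log X$ main term with the claimed constant $\mathfrak{S}_1(r_0,r_1)=\frac{2f_0^2}{\phi(8)^2(\prod_{p|2r_0}f_p)(\prod_{p|2r_1}f_p)}$; the factor $2$ arises from combining the two symmetric hyperbola pieces (the overlap and lower-order pieces being absorbed), and the error in this second application, after integration, is $O(\sqrt{\log\log X})$ as claimed, uniformly in $c_0,c_1$ in the stated ranges.

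The main obstacle I anticipate is bookkeeping the uniformity in $r_0,r_1,c_0,c_1$ and the propagation of error terms through the two nested applications of Lemma \ref{Siegel--Walfisz1}: each application loses a factor of roughly $\sqrt{\log}$ when one sums $1/\tau$ over a dyadic range, and one must check that the power-saving $(\log X)^{-C}$ errors, multiplied by $\tau(r_i)r_i Q$ with $Q=8$ and $r_i\leq (\log X)^{C_1}$, still beat $\sqrt{\log\log X}/(c_0c_1)$ — this forces $C$ to be chosen sufficiently large relative to $C_1$ and $C_2$, which is fine since $C$ in Lemma \ref{Siegel--Walfisz1} is arbitrary. A secondary technical point is that the inner variable's modulus in the first application is $8$ while we also impose $\gcd(n_0,r_0)=1$; Lemma \ref{Siegel--Walfisz1} is stated exactly to cover this combined congruence-and-coprimality condition, so the only real work is the partial-summation evaluation of $\sum_{n_1\leq t}\frac{1}{n_1\sqrt{\log(n_1c_1/c_0)}\,\tau(n_1)}$ and confirming that replacing $\log(n_1c_1/c_0)$ by $\log n_1$ (valid since $c_0,c_1\leq(\log X)^{C_2}$ and the main range has $n_1\geq (\log X)^{\text{large}}$) costs only $O(\sqrt{\log\log X})$.
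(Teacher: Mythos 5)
Your proposal is correct and follows essentially the same route as the paper: split the sum according to which of $n_0c_0$, $n_1c_1$ realises the maximum (plus the diagonal overlap), apply Lemma \ref{Siegel--Walfisz1} to the inner variable, then evaluate the outer sum by partial summation and a second application of Lemma \ref{Siegel--Walfisz1}, with the Taylor expansion replacing $\log(n c_0/c_1)$ by $\log n$ and the factor $2$ in $\mathfrak{S}_1$ coming from the two symmetric pieces. The only items the paper spells out that you gloss over are minor: ensuring the inner range exceeds $2$ before invoking the lemma, and checking that the relative error $(\log\log 3r)^{3/2}/\log X$ on the inner main term sums to $O(1)$, both of which fit within the bookkeeping you already flag.
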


\begin{remark}
    Note that the presence of $c_0$ and $c_1$ in the denominator of this asymptotic is due to their presence in the denominator of the summand and not due to their presence in the range of the $n_i$. It will be seen in the proof that they become untangled from the maximum.
\end{remark}

\begin{proof}
Set the sum to be $H(X)$. We split it into three regions depending on the value of $\|n_0c_0,n_1c_1\|$:
\[
H(X) = H_0(X)+H_1(X)-H_2(X)
\]
where
\[
H_0(X) = \sum_{\substack{n_0c_0\leq X\\ \mathrm{gcd}(n_0,r_0)=1\\ n_0\equiv q_0\bmod{8}}}\sum_{\substack{n_1\leq n_0c_0/c_1\\ \mathrm{gcd}(n_1,r_1)=1\\ n_1\equiv q_1\bmod{8}}} \frac{1}{n_0^2c_0^2 \tau(n_0)\tau(n_1)},\;\;\;
H_1(X) = \sum_{\substack{n_1c_1\leq X\\ \mathrm{gcd}(n_1,r_1)=1\\ n_1\equiv q_1\bmod{8}}}\sum_{\substack{n_0\leq n_1c_1/c_0\\ \mathrm{gcd}(n_0,r_0)=1\\ n_0\equiv q_0\bmod{8}}} \frac{1}{n_1^2c_1^2 \tau(n_0)\tau(n_1)},
\]
and
\[
H_2(X) = \sum_{\substack{\|n_0c_0,n_1c_1\|\leq X\\ \mathrm{gcd}(n_i,r_i)=1\;\forall i\in\{0,1\}\\ n_i\equiv q_i\bmod{8}\;\forall i\in\{0,1\}\\n_0c_0=n_1c_1}} \frac{1}{n_0^2c_0^2 \tau(n_0)\tau(n_1)}.
\]

Let us first consider $H_0(X)$. In order to use Lemma \ref{Siegel--Walfisz1} on the inner sum we need to ensure that $n_0c_0/c_1\geq 2$. We write:
\begin{align*}
H_0(X) &= \sum_{\substack{2c_1 \leq n_0c_0\leq X\\ \mathrm{gcd}(n_0,r_0)=1\\ n_0\equiv q_0\bmod{8}}}\sum_{\substack{n_1\leq n_0c_0/c_1\\ \mathrm{gcd}(n_1,r_1)=1\\ n_1\equiv q_1\bmod{8}}} \frac{1}{n_0^2c_0^2 \tau(n_0)\tau(n_1)} + O\left(\sum_{\substack{n_0c_0< 2c_1}}\sum_{\substack{n_1\leq n_0c_0/c_1}} \frac{1}{n_0^2c_0^2 \tau(n_0)\tau(n_1)}\right),
\end{align*}
To deal  with this second sum we note that $n_1\leq 2$ and swap the order of summation. Then it becomes
\[
\ll \sum_{n_1\leq 2}\sum_{n_1c_1/c_0\leq n_0}\frac{1}{c_0^2n_0^2} \ll \frac{1}{c_0c_1}.
\]
Thus we are left with
\begin{align*}
H_0(X) &= \sum_{\substack{2c_1 \leq n_0c_0\leq X\\ \mathrm{gcd}(n_0,r_0)=1\\ n_0\equiv q_0\bmod{8}}}\sum_{\substack{n_1\leq n_0c_0/c_1\\ \mathrm{gcd}(n_1,r_1)=1\\ n_1\equiv q_1\bmod{8}}} \frac{1}{n_0^2c_0^2 \tau(n_0)\tau(n_1)} + O\left(\frac{1}{c_0c_1}\right).
\end{align*}
Now applying Lemma \ref{Siegel--Walfisz1} with $Q=1$ and $C=3/2$ to the sum over $n_1$ we obtain:
\begin{align*}
H_0(X) &= \frac{\mathfrak{S}_0(r_1)}{\phi(8)c_0c_1}M(X) + O\left(\frac{\tau(r_1)(\log\log 3r_1)^{3/2}}{c_0c_1}E(X)\right),
\end{align*}
where
\begin{align*}
M(X) = \sum_{\substack{2c_1 \leq n_0c_0\leq X\\ \mathrm{gcd}(n_0,r_0)=1\\ n_0\equiv q_0\bmod{8}}} \frac{1}{n_0\tau(n_0)\sqrt{\log n_0c_0/c_1}}\;\;\textrm{and}\;\;
E(X) = \sum_{\substack{2c_1 \leq n_0c_0\leq X}} \frac{1}{n_0\tau(n_0)(\log n_0c_0/c_1)^{3/2}}
\end{align*}
Note that for small $n_0$ the error terms are roughly the same size as the main term; however, upon summing the $n_0$ over a large range, the dominance of the main term is maintained. Since $(\log\log r_1)^{3/2}\ll_{C_1} (\log\log\log X)^{3/2}$ it suffices to show that $E(X)\ll 1$. To see this, apply partial summation and Lemma \ref{Siegel--Walfisz1}:
\begin{align*}
    E(X) \ll& \frac{c_0}{X(\log X/c_1)^{3/2}}\sum_{\substack{2c_1 \leq n_0c_0\leq X}} \frac{1}{\tau(n_0)} + 1 + \int_{2c_1/c_0}^{X/c_0} \frac{1}{t^2(\log tc_0/c_1)^{3/2}}\sum_{\substack{2c_1/c_0 \leq n_0\leq t}} \frac{1}{\tau(n_0)} dt,\\
    \ll&\; 1 + \int_{\|2,2c_1/c_0\|}^{X/c_0} \frac{1}{t(\log tc_0/c_1)^{3/2}(\log t)^{1/2}} dt \\
    &+ \mathds{1}(c_1/c_0< 1)\int_{2c_1/c_0}^{2} \frac{1}{t^2(\log tc_0/c_1)^{3/2}}\sum_{\substack{2c_1/c_0 \leq n_0\leq t}} \frac{1}{\tau(n_0)} dt\\
    \ll&\; 1 + \int_{\|2,2c_1/c_0\|}^{X/c_0} \frac{1}{t(\log tc_0/c_1)^{3/2}(\log t)^{1/2}} dt \ll 1,
\end{align*}
where in the leading term of the second step we have used the fact that $c_1\leq (\log X)^{C_2}$ and a trivial bound to assert that
\[
\frac{1}{(\log X/c_1)^{3/2}} \ll 1.
\]
To see that the final integral converges, use the fact that $\sqrt{\log t}\geq \sqrt{\log 2}$ in this interval and we use the linear substitution $y=tc_0/c_1$. For $M(X)$ we increase the lower bound of this sums range:
\begin{align*}
M(X) = \sum_{\substack{(\log X)^{2C_2} \leq n_0\leq X/c_0\\ \mathrm{gcd}(n_0,r_0)=1\\ n_0\equiv q_0\bmod{8}}} \frac{1}{n_0\tau(n_0)\sqrt{\log n_0c_0/c_1}} + O\left(\sum_{\substack{n_0\leq (\log X)^{2C_2}}} \frac{1}{n_0\tau(n_0)}\right).
\end{align*}
A straightforward partial summation argument shows that this error term is $\ll \sqrt{\log\log X}$. For the other range we note that since $n_0\geq (c_0/c_1)^2$, we may use the Taylor series expansion
\[
\frac{1}{\sqrt{\log n_0c_0/c_1}} = \frac{1}{\sqrt{\log n_0}} + O\left(\frac{(\log c_0/c_1)}{(\log n_0)^{3/2}}\right).
\]
We then have
\[
M(X) = \sum_{\substack{(\log X)^{2C_2} \leq n_0\leq X/c_0\\ \mathrm{gcd}(n_0,r_0)=1\\ n_0\equiv q_0\bmod{8}}} \frac{1}{n_0\tau(n_0)\sqrt{\log n_0}} + O\left(\sum_{\substack{(\log X)^{2C_2} \leq n_0\leq X}} \frac{(\log c_0/c_1)}{n_0(\log n_0)^{3/2}}\right) + O_{C_2}\left(\sqrt{\log\log X}\right)
\]
The central error term sum converges and so,
\[
\sum_{\substack{(\log X)^{2C_2} \leq n_0\leq X/c_0}} \frac{(\log c_0/c_1)}{n_0(\log n_0)^{3/2}} \ll_{C_2} \frac{(\log c_0/c_1)}{\sqrt{\log\log X}} \ll_{C_2} \sqrt{\log\log X},
\]
For the leading sum in $M(X)$ we use partial summation to obtain
\begin{align*}
     & \int_{(\log X)^{2C_2}}^{X/c_0}\frac{1}{t^2\sqrt{\log t}}\sum_{\substack{n_0\leq t\\ \mathrm{gcd}(n_0,r_0)=1\\ n_0\equiv q_0\bmod{8}}} \frac{1}{\tau(n_0)}dt + O\left(\frac{1}{X\sqrt{\log X}}\sum_{n_0\leq X}\frac{1}{\tau(n_0)}\right)\\ &+ O\left(\int_{2}^{X}\frac{1}{t^2(\log t)^{3/2}}\sum_{\substack{n_0\leq t}} \frac{1}{\tau(n_0)}dt\right).
\end{align*}
Using the trivial bound for the sums in the error terms it is clear that they are $O(1)$. Finally we may apply Lemma \ref{Siegel--Walfisz1} with $Q=1$ and $C=3$ to the sum inside the main term, this time maintaining the constant:
\begin{align*}
    M(X) =& \frac{\mathfrak{S}_0(r_0)}{\phi(8)}\int_{(\log X)^{2C_2}}^{X/c_0}\frac{1}{t(\log t)}dt + O\left(\int_{2}^{X}\frac{(\log \log 3r_0)^{3/2}}{t(\log t)^2}dt\right) \\ &+ O\left(\int_{2}^{X}\frac{\tau(r_0)}{t(\log t)^{7/2}}dt\right) + O_{C_2}\left(\sqrt{\log \log X}\right)
\end{align*}
These latter integrals will converge as $X$ tends to infinity using $r_0\leq (\log X)^{C_1}$ to deal with the presence of $r_0$. The integral in the main term is
\[
\int_{(\log X)^{2C_2}}^{X/c_0}\frac{1}{t(\log t)}dt = (\log \log X/c_0) + O_{C_2}(\log \log \log X) = (\log \log X) + O_{C_2}(\log \log \log X)
\]
so that
\[
M(X) = \frac{\mathfrak{S}_0(r_0)}{\phi(8)}\log \log X + O_{C_2}\left(\sqrt{\log \log X}\right).
\]
Substituting our expressions for $M(X)$ and $E(X)$ into $H_0(X)$:
\begin{align*}
    H_0(X) &= \frac{\mathfrak{S}_0(r_0)\mathfrak{S}_0(r_1)}{\phi(8)^2c_0c_1}(\log \log X) + O_{C_1,C_2}\left(\frac{\sqrt{\log \log X}}{c_0c_1}+\frac{\tau(r_0)\tau(r_1)(\log\log\log X)^{3/2}}{c_0c_1}\right).
\end{align*}
We will similarly obtain the same expression for $H_1(X)$:
\begin{align*}
    H_1(X) &= \frac{\mathfrak{S}_0(r_0)\mathfrak{S}_0(r_1)}{\phi(8)^2c_0c_1}(\log \log X) + O_{C_1,C_2}\left(\frac{\sqrt{\log \log X}}{c_0c_1}+\frac{\tau(r_0)\tau(r_1)(\log\log\log X)^{3/2}}{c_0c_1}\right).
\end{align*}
For $H_2(X)$, suppose without loss in generality that $c_0\geq c_1$. Then
\begin{align*}
    H_2(X) \ll \frac{1}{c_0^2}\sum_{n_0\leq X/c_0} \frac{1}{n_0^2} \ll \frac{1}{c_0c_1}.
\end{align*}
Since $\mathfrak{S}_1(r_0,r_1) = \frac{2\mathfrak{S}_0(r_0)\mathfrak{S}_0(r_1)}{\phi(8)^2}$,
we are done.
\end{proof}

Using the same methods we can obtain the following variation:

\begin{lemma}\label{maintermlemma2}
    Let $X\geq 3$, $C_1,C_2>0$. Then for any fixed $1\leq c_0,c_1\leq (\log X)^{C_1}$:
\[
\mathop{\sum\sum}_{\substack{\|n_0c_0,n_1c_1\|\leq X \\ \|n_0d_0,n_1d_1\|>(\log X)^{C_2}}} \frac{\log \|n_0c_0,n_1c_1\|}{\|n_0c_0,n_1c_1\|^2\tau(n_0)\tau(n_1)} \ll_{C_1} \frac{\log X}{c_0c_1}
\]
where the implied constant depends only on $C_1,C_2>0$.
\end{lemma}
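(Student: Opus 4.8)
The plan is to mimic the structure of the proof of Lemma~\ref{maintermlemma1}, but since we only need an upper bound of the correct order of magnitude we can afford to be considerably more wasteful. First I would apply the hyperbola method (Lemma~\ref{hyperbolamethod}) with $Y=X^{1/2}$ to split the sum into the region $\|n_0c_0,n_1c_1\|\leq X^{1/2}$ together with the region $\|n_2c_2,n_3c_3\|\leq X^{1/2}$ (here there are only two ``slots''), minus the product region. Actually, for a sum in only two variables it is cleaner to split directly according to whether $n_0c_0\geq n_1c_1$ or $n_1c_1>n_0c_0$, so that the maximum is replaced by $n_0c_0$ (respectively $n_1c_1$) and one obtains two sums of the shape
\[
\sum_{\substack{n_0c_0\leq X,\; n_0c_0>(\log X)^{C_2}/\|d_0,d_1\|\text{-ish}}}\frac{\log(n_0c_0)}{n_0^2c_0^2\tau(n_0)}\sum_{n_1\leq n_0c_0/c_1}\frac{1}{\tau(n_1)},
\]
plus a diagonal term $n_0c_0=n_1c_1$ which, exactly as for $H_2(X)$ in Lemma~\ref{maintermlemma1}, contributes $O(1/(c_0c_1))$.

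For the main sums I would bound the inner sum over $n_1$ crudely by $\sum_{n_1\leq n_0c_0/c_1}\frac{1}{\tau(n_1)}\ll \frac{n_0c_0/c_1}{\sqrt{\log(n_0c_0/c_1)}}\ll \frac{n_0c_0}{c_1\sqrt{\log n_0}}$ (valid once $n_0$ is larger than a power of $\log X$, which is guaranteed by the lower-bound constraint $\|n_0d_0,n_1d_1\|>(\log X)^{C_2}$ after noting $d_0,d_1\geq 1$ so $n_0\geq (\log X)^{C_2}/d_0$, or handling the tiny range trivially as in the $H_0(X)$ computation). This reduces the problem to estimating
\[
\frac{1}{c_0c_1}\sum_{(\log X)^{c}\ll n_0\leq X/c_0}\frac{\log(n_0c_0)}{n_0\tau(n_0)\sqrt{\log n_0}},
\]
and since $\log(n_0c_0)\ll \log X$ uniformly (using $c_0\leq(\log X)^{C_1}$ and $n_0\leq X$), we may pull $\log X$ out and are left with $\frac{\log X}{c_0c_1}\sum_{n_0\leq X}\frac{1}{n_0\tau(n_0)\sqrt{\log n_0}}$. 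By partial summation together with Lemma~\ref{Siegel--Walfisz1} (with $Q=r=1$), the partial sums of $1/\tau(n_0)$ are $\ll X/\sqrt{\log X}$, so this last sum is $\ll \int^X \frac{dt}{t(\log t)}\ll \log\log X=o(1)\cdot$ anything, in particular $\ll 1$. Hence the whole contribution is $\ll_{C_1}\frac{\log X}{c_0c_1}$, as required. (One could even avoid Lemma~\ref{Siegel--Walfisz1} here and use the trivial bound $\sum_{n_1\leq Y}1/\tau(n_1)\ll Y$, which still gives $\sum_{n_0\leq X}\frac{\log(n_0c_0)}{n_0\tau(n_0)}\ll \log X$ after partial summation and the elementary $\sum_{n\leq t}1/\tau(n)\ll t/\sqrt{\log t}$.)

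I do not anticipate a genuine obstacle: the statement is an upper bound with a built-in factor $\log X$, which is exactly the ``trivial'' size of $\sum_{n_0c_0\leq X}\log(n_0c_0)/(n_0^2c_0^2)$ times the size $n_0c_0/c_1$ of the inner sum. The only mild care needed is (i) untangling $c_0,c_1$ from the maximum — handled by the explicit case split on $n_0c_0$ versus $n_1c_1$, exactly as in the remark following Lemma~\ref{maintermlemma1}; (ii) checking that the lower-bound condition $\|n_0d_0,n_1d_1\|>(\log X)^{C_2}$ either forces $n_0$ (or $n_1$) past a fixed power of $\log X$ so that $\sqrt{\log(n_0/c_1)}\gg\sqrt{\log n_0}$ and the Taylor-type comparison $\frac{1}{\sqrt{\log(n_0c_0/c_1)}}=\frac{1}{\sqrt{\log n_0}}+O(\cdots)$ from Lemma~\ref{maintermlemma1} is legitimate, or else lies in a range small enough to be bounded trivially; and (iii) noting the diagonal term $n_0c_0=n_1c_1$ separately. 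None of these is more than a routine repetition of the arguments already carried out in the proof of Lemma~\ref{maintermlemma1}, which is precisely why the authors write ``using the same methods.''
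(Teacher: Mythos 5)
Your overall structure (splitting according to which of $n_0c_0$, $n_1c_1$ is the maximum, bounding the inner divisor sum via Lemma \ref{Siegel--Walfisz1}, then partial summation in $n_0$) is the same as the paper's, but the final estimate as you wrote it does not give the stated bound. The problem is the step where you pull $\log(n_0c_0)\ll\log X$ out of the sum and then claim $\sum_{n_0\leq X}\frac{1}{n_0\tau(n_0)\sqrt{\log n_0}}\ll 1$: this is false. By partial summation with $\sum_{n\leq t}1/\tau(n)\ll t/\sqrt{\log t}$, that sum is $\asymp\int_2^X\frac{dt}{t\log t}\asymp\log\log X$, which is unbounded (your remark ``$\log\log X=o(1)\cdot$ anything, in particular $\ll 1$'' is not correct). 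So your main route only yields $\ll\frac{(\log X)(\log\log X)}{c_0c_1}$, a factor $\log\log X$ weaker than the lemma; and since the remark after the lemma shows the true order is $\asymp\log X/(c_0c_1)$, this loss comes precisely from being wasteful at that point, not from anything recoverable later. Your parenthetical ``even cruder'' alternative is worse still: bounding the inner sum trivially leaves $\sum_{n_0\leq X}\frac{\log(n_0c_0)}{n_0\tau(n_0)}$, which by the same partial summation is $\asymp(\log X)^{3/2}$, not $\ll\log X$.

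The repair is exactly the extra care the paper takes: keep the logarithm inside the sum and split $\log(n_0c_0)=\log n_0+\log c_0$. The piece with $\log c_0\ll C_1\log\log X$ reduces to the $M(X)$-type sum from Lemma \ref{maintermlemma1} and contributes $\ll(\log\log X)^2/(c_0c_1)$, which is acceptable; the main piece becomes $\frac{1}{c_0c_1}\sum_{n_0\leq X}\frac{\sqrt{\log n_0}}{n_0\tau(n_0)}$, where the $\sqrt{\log n_0}$ in the numerator is exactly cancelled by the $1/\sqrt{\log t}$ saving from $\sum_{n\leq t}1/\tau(n)\ll t/\sqrt{\log t}$ in the partial summation, giving $\ll\int_2^{X}\frac{dt}{t}\ll\log X$. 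With that modification (the rest of your setup, including the handling of the small range of $n_0$ and the untangling of $c_0,c_1$ from the maximum, is fine and matches the paper), the argument goes through.
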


\begin{remark}
    By being more careful in the following proof we may also obtain the asymptotic
    \[
    \mathop{\sum\sum}_{\substack{\|n_0c_0,n_1c_1\|\leq X\\ \mathrm{gcd}(n_i,2r_i)=1\;\forall i\in\{0,1\}\\ n_i\equiv q_i\bmod{8}}} \frac{\log \|n_0c_0,n_1c_1\|}{\|n_0c_0,n_1c_1\|^2\tau(n_0)\tau(n_1)} \sim \frac{\mathfrak{S}_1(r_0,r_1)\log X}{\phi(8)^2 c_0c_1},
    \]
    but this is not needed later.
\end{remark}

\begin{proof}
Call the sum on the left-hand side $H(X)$. Then as before we may write
\[
H(X) \leq H_0(X)+H_1(X)
\]
where
\[
H_0(X) = \sum_{\substack{n_0c_0\leq X}}\sum_{\substack{n_1\leq n_0c_0/c_1}} \frac{(\log n_0c_0)}{n_0^2c_0^2 \tau(n_0)\tau(n_1)}\;\;\textrm{and}\;\;
H_1(X) = \sum_{\substack{n_1c_1\leq X}}\sum_{\substack{n_0\leq n_1c_1/c_0}} \frac{(\log n_1c_1)}{n_1^2c_1^2 \tau(n_0)\tau(n_1)},
\]
Looking at $H_0(X)$ first as in the previous proof we once more ensure the range over $n_1$ is $\geq 2$ in order to apply Lemma \ref{Siegel--Walfisz1}:
In order to use Lemma \ref{Siegel--Walfisz1} on the inner sum we need to ensure that $n_0c_0/c_1\geq 2$. We write:
\begin{align*}
H_0(X) &= \sum_{\substack{2c_1 \leq n_0c_0\leq X}}\sum_{\substack{n_1\leq n_0c_0/c_1}} \frac{(\log n_0c_0)}{n_0^2c_0^2 \tau(n_0)\tau(n_1)} + \sum_{\substack{n_0c_0< 2c_1\\ \mathrm{gcd}(n_0,r_0)=1\\ n_0\equiv q_0\bmod{8}}}\sum_{\substack{n_1\leq n_0c_0/c_1\\ \mathrm{gcd}(n_1,r_1)=1\\ n_1\equiv q_1\bmod{8}}} \frac{(\log n_0c_0)}{n_0^2c_0^2 \tau(n_0)\tau(n_1)}.\\
&= \sum_{\substack{2c_1 \leq n_0c_0\leq X\\ \mathrm{gcd}(n_0,r_0)=1\\ n_0\equiv q_0\bmod{8}}}\sum_{\substack{n_1\leq n_0c_0/c_1\\ \mathrm{gcd}(n_1,r_1)=1\\ n_1\equiv q_1\bmod{8}}} \frac{(\log n_0c_0)}{n_0^2c_0^2 \tau(n_0)\tau(n_1)} + O_{C_1}\left(\frac{(\log\log X)}{c_0c_1}\right).
\end{align*}
Now applying Lemma \ref{Siegel--Walfisz1} as an upper bound to the inner sum we obtain:
\begin{align*}
H_0(X) &\ll \frac{1}{c_0c_1}\sum_{\substack{2c_1 \leq n_0c_0\leq X}} \frac{(\log n_0c_0)}{n_0\tau(n_0)\sqrt{\log n_0c_0/c_1}}.
\end{align*}
We split this sum into two:
\begin{align*}
H_0(X) &\ll_{C_2} \frac{1}{c_0c_1}\sum_{\substack{(\log X)^{2C_2} \leq n_0\leq X/c_0}} \frac{(\log n_0c_0)}{n_0\tau(n_0)\sqrt{\log n_0c_0/c_1}} + \frac{1}{c_0c_1}\sum_{\substack{ n_0\leq (\log X)^{2C_2}}} \frac{(\log \log X)}{n_0\tau(n_0)}.
\end{align*}
The second sum here may be seen to be
\[
\frac{1}{c_0c_1}\sum_{\substack{ n_0\leq (\log X)^{2C_2}}} \frac{(\log \log X)}{n_0\tau(n_0)} \ll_{C_2} \frac{(\log\log X)^{3/4}}{c_0c_1}
\]
using a standard partial summation argument. For the first sum above we use a Taylor series expansion since we have $n_0\geq (\log X)^{2C_2}$. Thus
\[
\frac{1}{\sqrt{\log n_0c_0/c_1}} \ll_{C_2} \frac{1}{\sqrt{\log n_0}}.
\]
Using the logarithmic rule we may also get rid of the $c_0$ in the numerator of the summand. Overall we have,
\begin{align*}
H_0(X) \ll_{C_2}& \frac{1}{c_0c_1}\sum_{\substack{(\log X)^{2C_2} \leq n_0\leq X/c_0}} \frac{\sqrt{\log n_0}}{n_0\tau(n_0)} + \frac{\log\log X}{c_0c_1}\sum_{\substack{(\log X)^{2C_2} \leq n_0\leq X/c_0}} \frac{1}{n_0\tau(n_0)\sqrt{\log n_0}} \\&+ \frac{(\log \log X)^{3/4}}{c_0c_1}.
\end{align*}
Note that the second sum above is of the same form as the main term of $M(X)$ in Lemma \ref{maintermlemma1}, which we evaluated to be of order $\log\log X$. For the leading term above we use partial summation and Lemma \ref{Siegel--Walfisz1}:
\begin{align*}
\frac{1}{c_0c_1}\sum_{\substack{(\log X)^{2C_2} \leq n_0\leq X/c_0}} \frac{\sqrt{\log n_0}}{n_0\tau(n_0)} &\ll_{C_2} \frac{\sqrt{\log X}}{c_0c_1X}\sum_{\substack{n_0\leq X}} \frac{1}{\tau(n_0)} + \int_{2}^{X}\frac{\sqrt{\log t}}{c_0c_1t^2}\sum_{\substack{n_0\leq t}} \frac{1}{\tau(n_0)} dt\\
&\ll_{C_2} \frac{1}{c_0c_1} + \frac{1}{c_0c_1}\int_{2}^{X}\frac{dt}{t} \ll_{C_2} \frac{\log X}{c_0c_1}.
\end{align*}
Note that in the above computation, we bounded the derivative of $\frac{\sqrt{\log t}}{t}$ for brevity. Thus:
\[
H_0(X) \ll_{C_2} \frac{\log X}{c_0c_1},
\]
and we may similarly obtain the same bound for $H_1(X)$.
\end{proof}

The following result regards similar sums to the above, but over shorter ranges. We will see that they have similar flavour to sums already seen in the above proofs:

\begin{lemma}\label{maintermlemma3}
    Let $X\geq 3$, $C_1,C_2>0$. Then for any fixed $1\leq c_0,c_1\leq X^{C_1}$, $1\leq d_0,d_1\leq X^{C_2/2}$:
\[
\mathop{\sum\sum}_{\substack{\|n_0d_0,n_1d_1\|\leq X^{C_2}}} \frac{1}{\|n_0c_0,n_1c_1\|^2\tau(n_0)\tau(n_1)} \ll_{C_1,C_2} \left(\frac{\sqrt{ \log X}}{c_0c_1}\right),
\]
where the implied constant only depends on $C_1$ and $C_2$.
\end{lemma}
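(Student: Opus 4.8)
The plan is to run the hyperbola split used in the proofs of Lemmas \ref{maintermlemma1} and \ref{maintermlemma2}, but here the quadratic decay $\|n_0c_0,n_1c_1\|^{-2}$ shortens everything so much that a completely trivial bound on the inner sum suffices, followed by a single classical estimate on the outer one. Write $H(X)$ for the sum on the left. Splitting according to whether $n_0c_0\geq n_1c_1$ or $n_1c_1\geq n_0c_0$, and discarding --- as we may, for an upper bound, since every term is non-negative --- the inner box constraint $n_1d_1\leq X^{C_2}$ in the first region and $n_0d_0\leq X^{C_2}$ in the second, we obtain $H(X)\leq H_0(X)+H_1(X)$ where
\[
H_0(X)=\sum_{n_0\leq X^{C_2}/d_0}\frac{1}{n_0^2c_0^2\tau(n_0)}\sum_{n_1\leq n_0c_0/c_1}\frac{1}{\tau(n_1)}
\]
and $H_1(X)$ is the same expression with the roles of $(c_0,d_0)$ and $(c_1,d_1)$ interchanged.

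For the inner sum I would use nothing more than $\sum_{n_1\leq T}\tau(n_1)^{-1}\leq T$, valid for every real $T\geq 0$ since there are at most $\lfloor T\rfloor$ terms, each of size at most $1$. This gives
\[
H_0(X)\leq\frac{1}{c_0c_1}\sum_{n_0\leq X^{C_2}/d_0}\frac{1}{n_0\tau(n_0)}\leq\frac{1}{c_0c_1}\sum_{n_0\leq X^{C_2}}\frac{1}{n_0\tau(n_0)}.
\]
It then remains to bound $\sum_{n\leq T}\frac{1}{n\tau(n)}\ll\sqrt{\log T}$. This follows from Lemma \ref{Siegel--Walfisz1} applied with $\chi$ the principal character modulo $Q=1$ and $r=1$, which yields $\sum_{n\leq t}\tau(n)^{-1}\ll t(\log t)^{-1/2}$ for $t\geq 2$; one partial summation, exactly of the type used to evaluate $M(X)$ in the proof of Lemma \ref{maintermlemma1}, converts this into $\sum_{n\leq T}\frac{1}{n\tau(n)}\ll\sqrt{\log T}$. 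Taking $T=X^{C_2}$ and absorbing the constant $C_2$ then gives $H_0(X)\ll_{C_2}\sqrt{\log X}/(c_0c_1)$, and $H_1(X)$ is identical by symmetry, which is the stated bound.

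There is no real obstacle here: unlike in Lemma \ref{maintermlemma1}, where the delicate point was disentangling $c_0,c_1$ from the maximum while tracking a $\log\log X$ main term, here the factor $(c_0c_1)^{-1}$ drops out automatically --- the summand supplies $c_0^{-2}$ and the length of the inner sum supplies the compensating $c_0/c_1$. The only things to check are that discarding one of the two box inequalities genuinely only enlarges a sum of non-negative terms, and that the trivial bound $\sum_{n_1\leq n_0c_0/c_1}\tau(n_1)^{-1}\leq n_0c_0/c_1$ remains correct when $n_0c_0/c_1<1$ (the inner sum is then empty, contributing $0$). Note in particular that the hypotheses $c_0,c_1\leq X^{C_1}$ and $d_0,d_1\leq X^{C_2/2}$ are not needed in this argument; only $c_i,d_i\geq 1$ and the range $n_id_i\leq X^{C_2}$ are used.
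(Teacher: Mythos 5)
Your proof is correct and follows essentially the same route as the paper: split according to which of $n_0c_0$, $n_1c_1$ realises the maximum, bound the inner sum trivially by its length so the factor $(c_0c_1)^{-1}$ appears, and finish the remaining sum $\sum_{n\leq X^{C_2}}\frac{1}{n\tau(n)}\ll\sqrt{\log X}$ by partial summation together with Lemma \ref{Siegel--Walfisz1}. The extra care you take (non-negativity when dropping a box constraint, the empty-sum case, noting the upper bounds on $c_i,d_i$ are not needed) is consistent with, and slightly more explicit than, the paper's argument.
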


\begin{remark}
    The key point to note here is that the constants $c_0$ and $c_1$ are not included in the ranges for $n_0$ and $n_1$. This will cause trouble in the unwrapping argument before, especially since the constants may be larger than either variable very often. Instead it is enough just to use trivial bounds.
\end{remark}

\begin{proof}
The sum is at most $H_0(X)+H_1(X)$ where
\[
H_0(X) = \sum_{\substack{n_0\leq X^{C_2}}}\sum_{\substack{n_1\leq n_0c_0/c_1}} \frac{1}{n_0^2c_0^2 \tau(n_0)\tau(n_1)}\;\;\textrm{and}\;\;
H_1(X) = \sum_{\substack{n_1\leq X^{C_2}}}\sum_{\substack{n_0\leq n_1c_1/c_0}} \frac{1}{n_1^2c_1^2 \tau(n_0)\tau(n_1)}.
\]
Here we can use a trivial bound for the inner sums, giving
\[
H_0(X),H_1(X) \ll \sum_{\substack{n\leq X^{C_2}}} \frac{1}{nc_0c_1 \tau(n)}.
\]
Upon using partial summation and Lemma \ref{Siegel--Walfisz1} we obtain the desired bound.
\end{proof}

Next we put these together to obtain the following general main term result:

\begin{proposition}\label{maintermproposition}
    Let $X\geq 3$, $C_1,C_2,C_3>0$ and take any $\b{q}\in(\Z/8\Z)^{*4}$. Then for any fixed odd integers $1\leq r_0,r_1,r_2,r_3\leq(\log X)^{C_1}$ and fixed integers $1\leq c_0,c_1,c_2,c_3\leq (\log X)^{C_2}$, $1\leq d_0,d_1,d_2,d_3\leq (\log X)^{C_3/2}$ we have
    \begin{align*} \mathop{\sum\sum\sum\sum}_{\substack{\|n_0c_0,n_1c_1\|\cdot\|n_2c_2,n_3c_3\|\leq X\\\|n_0d_0,n_1d_1\|,\|n_2d_2,n_3d_3\|>(\log X)^{C_3} \\ \mathrm{gcd}(n_i,r_i)=1\;\forall\;0\leq i\leq 3\\ n_i\equiv q_i\bmod{8}\;\forall\;0\leq i\leq 3}} \hspace{-10pt}\frac{1}{\tau(n_0)\tau(n_1)\tau(n_2)\tau(n_3)} &= \frac{\mathfrak{S}_2(\b{r})X^2\log\log X}{c_0c_1c_2c_3\log X} \\ &+ O_{C_1,C_2,C_3}\left(\frac{\tau(r_0)\tau(r_1)\tau(r_2)\tau(r_3)X^2\sqrt{\log\log X}}{c_0c_1c_2c_3\log X}\right)
    \end{align*}
    where the implied constant depends at most on $C_1,C_2,C_3$ and we define
    \[
    \mathfrak{S}_2(\b{r}) = \frac{4f_0^4}{\phi(8)^4\left(\prod_{p|2r_0}f_p\right)\left(\prod_{p|2r_1}f_p\right)\left(\prod_{p|2r_2}f_p\right)\left(\prod_{p|2r_3}f_p\right)}.
    \]
\end{proposition}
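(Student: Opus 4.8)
The plan is to apply the hyperbola method of Lemma~\ref{hyperbolamethod} with the four variables split into the blocks $\{n_0,n_1\}$ and $\{n_2,n_3\}$ and cut $Y=X^{1/2}$, the coprimality, congruence and lower-bound conditions $\|n_0d_0,n_1d_1\|,\|n_2d_2,n_3d_3\|>(\log X)^{C_3}$ being absorbed into the two block weights. This writes the sum as $\Sigma_1+\Sigma_2-\Sigma_3$, where $\Sigma_1$ has outer block $(n_0,n_1)$ with $\|n_0c_0,n_1c_1\|\leq X^{1/2}$ and inner block $(n_2,n_3)$ with $\|n_2c_2,n_3c_3\|\leq X/\|n_0c_0,n_1c_1\|$, where $\Sigma_2$ is the same with the blocks swapped, and where $\Sigma_3$ is a product of two independent two-variable sums, each over a block with $\|\cdot\|\leq X^{1/2}$; the trivial bound $\sum_{n\leq t}1/\tau(n)\ll t/\sqrt{\log t}$ gives $\Sigma_3\ll X^2/(c_0c_1c_2c_3(\log X)^2)$, comfortably inside the claimed error. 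Since the two blocks are symmetric and $\mathfrak{S}_2(\b{r})$ is symmetric in the $r_i$, it suffices to treat $\Sigma_1$.

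In $\Sigma_1$ I would first discard the inner condition $\|n_2d_2,n_3d_3\|>(\log X)^{C_3}$: the removed part forces $n_2,n_3\leq(\log X)^{C_3}$ and so contributes $\ll(\log X)^{2C_3}/\log\log X$ for each fixed $(n_0,n_1)$, which is negligible once summed against $1/(\tau(n_0)\tau(n_1))$ over $\|n_0c_0,n_1c_1\|\leq X^{1/2}$. Next, for fixed $(n_0,n_1)$, I would sum out $n_3$ and then $n_2$ by the $\bmod 8$ form of Lemma~\ref{Siegel--Walfisz1} (principal character, $Q=1$, free parameter $C$ taken large); because $X/\|n_0c_0,n_1c_1\|\geq X^{1/2}$ and $c_i\leq(\log X)^{C_2}$, one may replace each $\log(X/(\|n_0c_0,n_1c_1\|c_i))$ by $\log(X/\|n_0c_0,n_1c_1\|)$ with acceptable relative error, treating the short initial ranges of the variables trivially. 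This should give
\[
\Sigma_1=\frac{\mathfrak{S}_0(r_2)\mathfrak{S}_0(r_3)X^2}{\phi(8)^2c_2c_3}\,\mathcal{M}+O_{C_1,C_2,C_3}\!\left(\frac{\tau(r_0)\tau(r_1)\tau(r_2)\tau(r_3)X^2\sqrt{\log\log X}}{c_0c_1c_2c_3\log X}\right),
\]
where
\[
\mathcal{M}=\mathop{\sum\sum}_{\substack{\|n_0c_0,n_1c_1\|\leq X^{1/2},\;\|n_0d_0,n_1d_1\|>(\log X)^{C_3}\\ \mathrm{gcd}(n_i,r_i)=1,\;n_i\equiv q_i\bmod 8\;(i=0,1)}}\frac{1}{\|n_0c_0,n_1c_1\|^2\,\tau(n_0)\tau(n_1)\,\log(X/\|n_0c_0,n_1c_1\|)},
\]
the displayed error being controlled via Lemma~\ref{maintermlemma1} (to bound $\mathop{\sum\sum}_{\|n_0c_0,n_1c_1\|\leq X^{1/2}}1/(\|n_0c_0,n_1c_1\|^2\tau(n_0)\tau(n_1))$).

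It remains to evaluate $\mathcal{M}$, and this is where Lemmas~\ref{maintermlemma1}--\ref{maintermlemma3} enter. Put $v=\|n_0c_0,n_1c_1\|$ and split $\tfrac1{\log(X/v)}=\tfrac1{\log X}+\big(\tfrac1{\log(X/v)}-\tfrac1{\log X}\big)$. Since $\log(X/v)\geq\tfrac12\log X$ on the range, the correction term is $O(\log v/(\log X)^2)$, so it contributes to $\mathcal{M}$ at most $\ll(\log X)^{-2}$ times the sum estimated in Lemma~\ref{maintermlemma2} (its $C_1$, $C_2$ taken to be $C_2$, $C_3$ respectively), hence is $\ll_{C_2}(c_0c_1\log X)^{-1}$. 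In the leading term $\tfrac1{\log X}\sum_v(\cdots)$ I would drop the condition $\|n_0d_0,n_1d_1\|>(\log X)^{C_3}$; the resulting discrepancy is bounded (noting $\|n_0c_0,n_1c_1\|\leq X^{1/2}$ is then automatic) by the sum estimated in Lemma~\ref{maintermlemma3} with $\log X$ in place of $X$, its $C_2$ taken to be $C_3$ and its $C_1$ taken to be $C_2$ — the hypothesis $d_i\leq(\log X)^{C_3/2}$ being exactly what that lemma needs — hence is $\ll_{C_2,C_3}\sqrt{\log\log X}/(c_0c_1)$; what remains is $\mathop{\sum\sum}_{\|n_0c_0,n_1c_1\|\leq X^{1/2}}(\|n_0c_0,n_1c_1\|^2\tau(n_0)\tau(n_1))^{-1}$ with the coprimality and congruence conditions in place, which by Lemma~\ref{maintermlemma1} (with $X^{1/2}$ for $X$) equals $\tfrac{\mathfrak{S}_1(r_0,r_1)\log\log X}{c_0c_1}+O_{C_1,C_2}\!\big(\tfrac{\tau(r_0)\tau(r_1)\sqrt{\log\log X}}{c_0c_1}\big)$, using $\log\log X^{1/2}=\log\log X+O(1)$ and that the hypotheses on $c_i,r_i$ survive increasing $C_1,C_2$ by $1$. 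Hence $\mathcal{M}=\tfrac{\mathfrak{S}_1(r_0,r_1)\log\log X}{c_0c_1\log X}+O_{C_1,C_2,C_3}\!\big(\tfrac{\tau(r_0)\tau(r_1)\sqrt{\log\log X}}{c_0c_1\log X}\big)$, and inserting this while using $\mathfrak{S}_1(r_0,r_1)=2\mathfrak{S}_0(r_0)\mathfrak{S}_0(r_1)/\phi(8)^2$ and $\mathfrak{S}_0(r_i)=f_0/\prod_{p|2r_i}f_p$ gives $\Sigma_1=\tfrac{\mathfrak{S}_2(\b{r})}{2}\cdot\tfrac{X^2\log\log X}{c_0c_1c_2c_3\log X}+O(\cdots)$. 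Adding the identical contribution of $\Sigma_2$ and the negligible $\Sigma_3$ completes the proof.

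The main obstacle will be the error accounting in the two applications of Lemma~\ref{Siegel--Walfisz1} inside $\Sigma_1$: one has to check that the relative error $O((\log\log 3r_i)^{3/2}/\log(\cdot))$ and the power-saving error $O_C(\tau(r_i)(\cdot)/(\log(\cdot))^C)$ produced by summing out $n_3$ and then $n_2$, once further summed over the outer block, stay below $O(\tau(r_0)\tau(r_1)\tau(r_2)\tau(r_3)X^2\sqrt{\log\log X}/(c_0c_1c_2c_3\log X))$. This forces one to isolate the short ranges of $n_2$ (and of $n_3$ when $n_2c_2/c_3<2$) and estimate them by hand, and it is precisely there that the hypotheses $r_i\leq(\log X)^{C_1}$, $c_i\leq(\log X)^{C_2}$, $d_i\leq(\log X)^{C_3/2}$ get used; the rest is the routine reduction to Lemmas~\ref{maintermlemma1}--\ref{maintermlemma3} sketched above.
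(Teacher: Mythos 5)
Your proposal is correct and follows essentially the same route as the paper: hyperbola method at $Y=X^{1/2}$, trivial treatment of the rectangular piece, insertion/removal of the $(\log X)^{C_3}$ lower-bound conditions at negligible cost, Lemma~\ref{Siegel--Walfisz1} on the inner block, the Taylor expansion of $1/\log(X/\|n_0c_0,n_1c_1\|)$, and then Lemmas~\ref{maintermlemma1}--\ref{maintermlemma3} to evaluate the remaining two-variable sum. The only inessential difference is your closing worry about short ranges of $n_2$ and $n_3$: since the inner condition $\|n_2c_2,n_3c_3\|\leq X/\|n_0c_0,n_1c_1\|$ is a maximum, the inner sum separates into two independent sums each of length at least $X^{1/2}(\log X)^{-C_2}$, so no short-range cases need to be isolated there.
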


\begin{proof}
    Call the sum on the left hand side $H(X)$. Then using Lemma \ref{hyperbolamethod} we may write
    \[
    H(X) = H_0(X)+H_1(X)+O(H_2(X))
    \]
    where
    \[
    H_0(X) = \mathop{\sum\sum\sum\sum}_{\substack{\|n_0c_0,n_1c_1\|\leq X^{1/2}\\ \|n_2c_2,n_3c_3\|\leq X/\|n_0c_0,n_1c_1\| \\ \|n_0d_0,n_1d_1\|,\|n_2d_2,n_3d_3\|>(\log X)^{C_3} \\ \mathrm{gcd}(n_i,2r_i)=1\;\forall\;0\leq i\leq 3\\ n_i\equiv q_i\bmod{8}\;\forall\;0\leq i\leq 3}}\frac{1}{\tau(n_0)\tau(n_1)\tau(n_2)\tau(n_3)},
    \]
    \[
    H_1(X) = \mathop{\sum\sum\sum\sum}_{\substack{\|n_2c_2,n_3c_3\|\leq X^{1/2}\\ \|n_0c_0,n_1c_1\|\leq X/\|n_2c_2,n_3c_3\| \\ \|n_0d_0,n_1d_1\|,\|n_2d_2,n_3d_3\|>(\log X)^{C_3} \\ \mathrm{gcd}(n_i,2r_i)=1\;\forall\;0\leq i\leq 3\\ n_i\equiv q_i\bmod{8}\;\forall\;0\leq i\leq 3}}\frac{1}{\tau(n_0)\tau(n_1)\tau(n_2)\tau(n_3)},
    \]
    and
    \[
    H_2(X) =  \mathop{\sum\sum\sum\sum}_{\substack{\|n_0c_0,n_1c_1\|\leq X^{1/2}\\ \|n_2c_2,n_3c_3\|\leq X^{1/2}}}\frac{1}{\tau(n_0)\tau(n_1)\tau(n_2)\tau(n_3)}.
    \]
    Let us first deal with $H_2(X)$. We write
    \begin{align*}
    H_2(X) \ll \prod_{i=0}^{3}\left(\sum_{\substack{n_ic_i\leq X^{1/2}}}\frac{1}{\tau(n_i)}\right) \ll \frac{X^2}{c_0c_1c_2c_3(\log X)^2},
    \end{align*}
    by Lemma \ref{Siegel--Walfisz1}. Now let us consider $H_0(X)$. Here we may add in the terms for which $\|n_2d_2,n_3d_3\|\leq (\log X)^{C_3}$ at the cost of a negligible error term since,
    \begin{equation}\label{section3smallsum}
    \mathop{\sum\sum\sum\sum}_{\substack{\|n_0c_0,n_1c_1\|\leq X^{1/2} \\ \|n_2d_2,n_3d_3\|\leq (\log X)^{C_3}}} \frac{1}{\tau(n_0)\tau(n_1)\tau(n_2)\tau(n_3)} \ll X(\log X)^{2C_3}.
    \end{equation}
    Then
    \[
    H_0(X) = \mathop{\sum\sum\sum\sum}_{\substack{\|n_0c_0,n_1c_1\|\leq X^{1/2}\\ \|n_2c_2,n_3c_3\|\leq X/\|n_0c_0,n_1c_1\|\\ \|n_0d_0,n_1d_1\|>(\log X)^{C_3} \\ \mathrm{gcd}(n_i,2r_i)=1\;\forall\;0\leq i\leq 3\\ n_i\equiv q_i\bmod{8}\;\forall\;0\leq i\leq 3}}\frac{1}{\tau(n_0)\tau(n_1)\tau(n_2)\tau(n_3)} + O(X(\log X)^{2C_3}).
    \]
    Using Lemma \ref{Siegel--Walfisz1} the sum over $n_2$ and $n_3$ is
    \begin{align*}
        \frac{\mathfrak{S}_0(r_2)\mathfrak{S}_0(r_3)X^2}{\phi(8)^2c_2c_3\|n_0c_0,n_1c_1\|^2(\log(X/\|n_0c_0,n_1c_1\|))} + O\left(\frac{X^2(\log\log \|3r_2,3r_3\|)^{3/2}}{c_2c_3\|n_0c_0,n_1c_1\|^2(\log (X/\|n_0c_0,n_1c_1\|))^{2}}\right).
    \end{align*}
    Note that we have suppressed the arbitrary log saving error term in this calculation. This can be done by noting that $Q=1$ and $\tau(r_i)\ll_{C_1} r_i^{1/C_1}\ll_{C_1} (\log X)$ so that
    \[
    \frac{\tau(r_i)X^2}{(\log X/\|n_0c_0,n_1c_1\|)^{C_4}} \ll \frac{X^2(\log\log \|3r_2,3r_3\|)^{3/2}}{(\log (X/\|n_0c_0,n_1c_1\|))^{2}}
    \]
    for $C_4$ chosen sufficiently large. Therefore we have
    \[
    H_0(X) = \frac{\mathfrak{S}_0(r_2)\mathfrak{S}_0(r_3)X^2}{c_2c_3}M_{0}(X)+O\left(\frac{X^2}{c_2c_3}E_{0}(X)\right) + O(X(\log X)^{2C_3})
    \]
    where
    \[
    M_0(X) = \mathop{\sum\sum}_{\substack{\|n_0c_0,n_1c_1\|\leq X^{1/2}\\ \|n_0d_0,n_1d_1\|>(\log X)^{C_3} \\ \mathrm{gcd}(n_i,2r_i)=1\;\forall\;0\leq i\leq 1\\ n_i\equiv q_i\bmod{8}\;\forall\;0\leq i\leq 1}}\frac{1}{\|n_0c_0,n_1c_1\|^2\tau(n_0)\tau(n_1)(\log X/\|n_0c_0,n_1c_1\|)}
    \]
    and
    \[
    E_0(X) = \mathop{\sum\sum}_{\substack{\|n_0c_0,n_1c_1\|\leq X^{1/2}}}\frac{(\log\log\|3r_2,3r_3\|)^{3/2}}{\|n_0c_0,n_1c_1\|^2\tau(n_0)\tau(n_1)(\log X/\|n_0c_0,n_1c_1\|)^2}.
    \]
    Using $\|n_0c_0,n_1c_1\|\leq X^{1/2}$, the usual Taylor series manoeuvre and Lemma \ref{maintermlemma1} we have
    \begin{align*}
    E_0(X) &\ll \frac{1}{(\log X)^2}\mathop{\sum\sum}_{\substack{\|n_0c_0,n_1c_1\|\leq X^{1/2}}}\frac{(\log\log\|3r_2,3r_3\|)^{3/2}}{\|n_0c_0,n_1c_1\|^2\tau(n_0)\tau(n_1)}\\
    &\ll \frac{(\log\log X)(\log\log\|3r_2,3r_3\|)^{3/2}}{c_0c_1(\log X)^2}
    \end{align*}
    which is sufficient. Now we turn to $M_0(X)$. Since $\|n_0c_0,n_1c_1\|\leq \sqrt{X}$ we may use a geometric series argument to write
    \[
    \frac{1}{\log X/\|n_0c_0,n_1c_1\|} = \frac{1}{\log X}+O\left(\frac{\log \|n_0c_0,n_1c_1\|}{(\log X)^2}\right).
    \]
    We have from Lemma \ref{maintermlemma1} (with Lemma \ref{maintermlemma3} to add in the terms for which $\|n_0d_0,n_1d_1\|\leq (\log X)^{C_3}$),
    \begin{align*}
    \mathop{\sum\sum}_{\substack{\|n_0c_0,n_1c_1\|\leq X^{1/2}\\ \|n_0d_0,n_1d_1\|>(\log X)^{C_3} \\ \mathrm{gcd}(n_i,2r_i)=1\;\forall\;0\leq i\leq 1\\ n_i\equiv q_i\bmod{8}\;\forall\;0\leq i\leq 1}}\hspace{-3pt}\frac{1}{\|n_0c_0,n_1c_1\|^2\tau(n_0)\tau(n_1)} &= \frac{\mathfrak{S}_1(r_0,r_1)\log\log (X^{1/2})}{c_0c_1} \\ &+ O_{C_1,C_2,C_3}\left(\frac{\tau(r_0)\tau(r_1)\sqrt{\log\log (X^{1/2})}}{c_0c_1}\right)
    \end{align*}
    and from Lemma \ref{maintermlemma2} that
    \[
    \mathop{\sum\sum}_{\substack{\|n_0c_0,n_1c_1\|\leq X^{1/2}\\ \|n_0d_0,n_1d_1\|>(\log X)^{C_3} \\ \mathrm{gcd}(n_i,2r_i)=1\;\forall\;0\leq i\leq 1\\ n_i\equiv q_i\bmod{8}\;\forall\;0\leq i\leq 1}}\frac{\log \|n_0c_0,n_1c_1\|}{\|n_0c_0,n_1c_1\|^2\tau(n_0)\tau(n_1)} \ll_{C_1,C_2,C_3} \frac{\log X}{c_0c_1}.
    \]
    We therefore conclude that
    \[
    M_0(X) = \frac{\mathfrak{S}_1(r_0,r_1)\log\log (X^{1/2})}{c_0c_1} + O_{C_1,C_2,C_3}\left(\frac{\sqrt{\log\log X}}{c_0c_1}\right).
    \]
    Note that when using the Taylor series above, the error term may in fact be of the same order as the main term. In writing it this way we are in fact splitting a constant into two parts - one independant of the $n_i$, which contributes to the $(\log\log X)$ by Lemma \ref{maintermlemma1} and a part dependent on the $n_i$ which contributes to an error of $O(1)$ by Lemma \ref{maintermlemma2}. We have now shown
    \begin{align*}
    H_0(X) &= \frac{ \mathfrak{S}_0(r_2)\mathfrak{S}_0(r_3)\mathfrak{S}_1(r_0,r_1)X^2\log\log X}{\phi(8)^2c_0c_1c_2c_3(\log X)} + O_{C_1,C_2,C_3}\left(\frac{\tau(r_0)\tau(r_1)X^2\sqrt{\log\log X}}{c_0c_1c_2c_3(\log X)}\right).
    \end{align*}
    Evaluating $H_1(X)$ in the same way we obtain the same result with $r_0$ and $r_1$ switched with $r_2$ and $r_3$. Noting that $\frac{\mathfrak{S}_0(r_0)\mathfrak{S}_0(r_1)\mathfrak{S}_1(r_2,r_3)}{\phi(8)^2} = \frac{\mathfrak{S}_0(r_2)\mathfrak{S}_0(r_3)\mathfrak{S}_1(r_0,r_1)}{\phi(8)^2} = \frac{\mathfrak{S}_2(\b{r})}{2}$ we combine the expressions for $H_0(X)$, $H_1(X)$ and $H_2(X)$ to conclude the proof.
\end{proof}

\subsection{Small Conductors - Symmetric Hyperbola Method}
This is the easiest of the three cases: all that is required for us to do is to apply Lemmas \ref{hyperbolamethod} and \ref{Siegel--Walfisz1} appropriately. To save space we introduce the following summation conditions:

\begin{equation}\label{section3sumconds}
    \begin{cases}
        \|n_0d_0,n_1d_1\|,\|n_2d_2,n_3d_3\|>(\log X)^{D}\\
        \mathrm{gcd}(n_i,r_i)=1\;\forall\; 0\leq i\leq 3\\
        n_i\equiv q_i\bmod{8}\;\forall\; 0\leq i\leq 3
    \end{cases}
\end{equation}
where $D>0$ and the $r_i$ and $d_i$ are some integers and $q_i\in(\Z/8\Z)^{*}$ for each $i$.
\begin{lemma}\label{symmetric-hyperbola-sums}
    Let $X\geq 3$, $C_1,C_2,C_3>0$ and fix odd integers $Q_0,Q_2$ and some $\b{q}\in(\Z/8\Z)^{*4}$. Suppose $\chi_0$ and $\chi_2$ are non-principal Dirichlet characters modulo $Q_0$ and $Q_2$ and that $g_1,g_3:\N\rightarrow\C$ are multiplicative functions such that $\lvert g_1(n)\rvert,\lvert g_3(n)\rvert \leq 1$ for all $n\in\N$. Then for any odd integers $1\leq r_0,r_1,r_2,r_3\leq(\log X)^{C_1}$ such that $\mathrm{gcd}(r_i,Q_i)$ for $i=0,2$ and any fixed $1\leq c_0,c_1,c_2,c_3\leq (\log X)^{C_2}$, $1\leq d_0,d_1,d_2,d_3\leq (\log X)^{C_3/2}$ we have
    \[ \mathop{\sum\sum\sum\sum}_{\substack{\b{n}\in\N^4,\|n_0c_0,n_1c_1\|\cdot\|n_2c_2,n_3c_3\|\leq X\\ \eqref{section3sumconds}}} \frac{\chi_0(n_0)g_1(n_1)\chi_2(n_2)g_3(n_3)}{\tau(n_0)\tau(n_1)\tau(n_2)\tau(n_3)} \ll_{C_1,C_2,C_3,C_4} \frac{Q_0Q_2 X^2}{c_0c_1c_2c_3(\log X)^{C_4}}
    \]
    for any $C_4>0$ where the implied constant depends at most on $C_1,C_2,C_3$ and $C_4$. Note we have used $D=C_3$ in \eqref{section3sumconds}.
\end{lemma}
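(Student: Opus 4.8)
The plan is to split the four variables into the pairs $(n_0,n_1)$ and $(n_2,n_3)$ via the hyperbola method and then gain an arbitrary power of $\log X$ from the non-principal characters $\chi_0,\chi_2$ using Lemma \ref{Siegel--Walfisz1}. The conditions $\gcd(n_i,r_i)=1$ and $n_i\equiv q_i\bmod 8$ in \eqref{section3sumconds} (with $D=C_3$) are per-variable, so the only conditions coupling distinct variables are the two lower bounds $\|n_0d_0,n_1d_1\|>(\log X)^{C_3}$ and $\|n_2d_2,n_3d_3\|>(\log X)^{C_3}$. First I would strip these off by inclusion--exclusion: complementing, say, $\|n_0d_0,n_1d_1\|>(\log X)^{C_3}$ confines $(n_0,n_1)$ to the polylogarithmic box $n_0\leq(\log X)^{C_3}/d_0$, $n_1\leq(\log X)^{C_3}/d_1$ (and similarly for the other pair). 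This reduces matters to a ``main'' piece carrying no $d$-condition and to auxiliary pieces in which at least one pair is confined to such a box.

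On the main piece the summand is a genuine product over $i$, so Lemma \ref{hyperbolamethod} with $Y=X^{1/2}$ gives $|H|\leq|H_0|+|H_1|+|H_2|$, where in $H_0$ one has $\|n_0c_0,n_1c_1\|\leq X^{1/2}$ and $\|n_2c_2,n_3c_3\|\leq T:=X/\|n_0c_0,n_1c_1\|$, in $H_1$ the pairs are interchanged, and in $H_2$ both are $\leq X^{1/2}$. The crucial point is that $\|n_2c_2,n_3c_3\|\leq T$ means merely $n_2\leq T/c_2$ and $n_3\leq T/c_3$, so for fixed $(n_0,n_1)$ the inner sum over $(n_2,n_3)$ in $H_0$ factors into a sum over $n_2$ times a sum over $n_3$. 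As $\chi_2$ is non-principal the indicator $\mathds{1}(\chi=\chi_0)$ in Lemma \ref{Siegel--Walfisz1} vanishes, giving a bound $\ll_C\tau(r_2)Q_2(T/c_2)(\log T)^{-C}$ for the $n_2$-sum for every $C>0$, while the $n_3$-sum is $\ll(T/c_3)(\log T)^{-1/2}$ by the same lemma with the trivial character; and $T\geq X^{1/2}$ on $H_0$ forces $\log T\asymp\log X$. Hence the inner $(n_2,n_3)$-sum is $\ll_C\tau(r_2)Q_2X^2\,c_2^{-1}c_3^{-1}\|n_0c_0,n_1c_1\|^{-2}(\log X)^{-C}$, and summing over $(n_0,n_1)$ --- after $|\chi_0(n_0)g_1(n_1)|\leq1$ --- by Lemma \ref{maintermlemma1} (applied with $X^{1/2}$ in place of $X$, whose sum is $\ll_{C_1,\epsilon}(\log X)^{\epsilon}c_0^{-1}c_1^{-1}$ since $\mathfrak{S}_1(r_0,r_1)\ll1$) yields
\[
|H_0|\ \ll_{C_1,\epsilon}\ \frac{\tau(r_2)Q_2X^2(\log X)^{\epsilon}}{c_0c_1c_2c_3(\log X)^{C}}\ \ll\ \frac{Q_0Q_2X^2}{c_0c_1c_2c_3(\log X)^{C-2\epsilon}},
\]
using $\tau(r_2)\ll_{C_1,\epsilon}(\log X)^{\epsilon}$ and $Q_2\leq Q_0Q_2$; taking $\epsilon=1/4$ and $C=C_4+1$ closes $H_0$.

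The estimate for $H_1$ is the same after interchanging the pairs, now using the non-principality of $\chi_0$ on the heavy variable $n_0$; in $H_2$ both pairs range up to $X^{1/2}$, so the summand splits completely into four one-variable box-sums, of which the $n_0$- and $n_2$-sums each save an arbitrary power of $\log X$ by Lemma \ref{Siegel--Walfisz1} and the $n_1$-, $n_3$-sums are each $\ll X^{1/2}c_i^{-1}(\log X)^{-1/2}$. The auxiliary pieces are handled in the same spirit: the confined pair contributes only polylogarithmically many terms and, after the free-pair sum, a convergent-type $\|\cdot\|^{-2}$-weighted tail of size $\ll_{C_3}(\log X)^{\epsilon}c_0^{-1}c_1^{-1}$, while the free pair still obeys a box condition with upper bound $\gg X(\log X)^{-C_2-C_3}$, so its character sum again saves an arbitrary power of $\log X$; thus every auxiliary piece is $\ll_C Q_0Q_2X^2c_0^{-1}c_1^{-1}c_2^{-1}c_3^{-1}(\log X)^{-C}$. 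Collecting the pieces with $C$ chosen large in terms of $C_4$ gives the lemma. The only point requiring care is the bookkeeping of the coupling conditions, together with the recurring verification that every summation range is at least a fixed power of $X$, so that $\log(\mathrm{range})\asymp\log X$ and the cancellation from $\chi_0,\chi_2$ is not diluted when the complementary pair is summed; beyond that the argument is routine.
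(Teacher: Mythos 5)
Your argument is correct and is essentially the paper's proof: the hyperbola method (Lemma \ref{hyperbolamethod}) with $Y=X^{1/2}$, Lemma \ref{Siegel--Walfisz1} applied to the non-principal character on whichever pair has the long range (so that the logarithm of the range is $\asymp\log X$ and one gets arbitrary logarithmic saving), trivial bounds on the remaining variables, and a $\|\cdot\|^{-2}$-weighted sum over the short pair. The only cosmetic differences are that the paper removes the conditions $\|n_0d_0,n_1d_1\|,\|n_2d_2,n_3d_3\|>(\log X)^{C_3}$ by adding back the complementary terms at cost $O(X(\log X)^{2C_3})$ inside each hyperbola piece rather than by inclusion--exclusion at the outset, and it bounds the short-pair sum by the cruder estimate $(\log X)^{2}/(c_0c_1)$ instead of invoking Lemma \ref{maintermlemma1}.
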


\begin{proof}
Call this sum $H(X)$. Then using Lemma \ref{hyperbolamethod} we obtain
\[
H(X) = H_0(X) + H_1(X) - H_2(X),
\]
where
\begin{equation}\label{non-principalsymmetrichyperbolicregionsum0}
H_0(X) = \mathop{\sum\sum\sum\sum}_{\substack{\|n_0c_0,n_1c_1\|\leq X^{1/2}\\ \|n_2c_2,n_3c_3\|\leq X/\|n_0c_0,n_1c_1\| \\ \eqref{section3sumconds}}}\frac{\chi_0(n_0)g_1(n_1)\chi_2(n_2)g_3(n_3)}{\tau(n_0)\tau(n_1)\tau(n_2)\tau(n_3)},
\end{equation}

\begin{equation}
H_1(X) = \mathop{\sum\sum\sum\sum}_{\substack{\|n_2c_2,n_3c_3\|\leq X^{1/2}\\ \|n_0c_0,n_1c_1\|\leq X/\|n_2c_2,n_3c_3\| \\ \eqref{section3sumconds}}}\frac{\chi_0(n_0)g_1(n_1)\chi_2(n_2)g_3(n_3)}{\tau(n_0)\tau(n_1)\tau(n_2)\tau(n_3)},
\end{equation}
and
\begin{equation}\label{typicalrectangularregioncharactersum}
H_2(X) = \mathop{\sum\sum\sum\sum}_{\substack{\|n_0c_0,n_1c_1\|,\|n_2c_2,n_3c_3\|\leq X^{1/2} \\ \eqref{section3sumconds}}}\frac{\chi_0(n_0)g_1(n_1)\chi_2(n_2)g_3(n_3)}{\tau(n_0)\tau(n_1)\tau(n_2)\tau(n_3)}.
\end{equation}
We deal with $H_2(X)$ first. Using the trivial bound \eqref{section3smallsum}, we may add in the terms for which $\|n_0d_0,n_1d_1\|>(\log X)^{C_3}$ or $\|n_2d_2,n_3d_3\|>(\log X)^{C_3}$ at the cost of a small error term:
\begin{equation*}
H_2(X) = \mathop{\sum\sum\sum\sum}_{\substack{\|n_0c_0,n_1c_1\|,\|n_2c_2,n_3c_3\|\leq X^{1/2} \\ \mathrm{gcd}(n_i,r_i)=1\;\forall 0\leq i\leq 3\\ n_i\equiv q_i\bmod{8}\;\forall 0\leq i\leq 3}}\frac{\chi_0(n_0)g_1(n_1)\chi_2(n_2)g_3(n_3)}{\tau(n_0)\tau(n_1)\tau(n_2)\tau(n_3)} + O(X(\log X)^{2C_3}).
\end{equation*}
This sum is now separable so that, using trivial bounds for the sums over $n_1$ and $n_3$ and Lemma \ref{Siegel--Walfisz1} for the sums over $n_2$ and $n_4$ we obtain:
\begin{align*}
H_2(X) =& \left(\sum_{\substack{n_0c_0\leq X^{1/2}\\ \mathrm{gcd}(n_0,r_0)=1\\ n_0\equiv q_0\bmod{8}}}\frac{\chi_0(n_0)}{\tau(n_0)}\right)\left(\sum_{\substack{n_1c_1\leq X^{1/2}\\ \mathrm{gcd}(n_1,r_1)=1\\ n_1\equiv q_1\bmod{8}}}\frac{g_1(n_1)}{\tau(n_1)}\right)\left(\sum_{\substack{n_2c_2\leq X^{1/2}\\ \mathrm{gcd}(n_2,r_2)=1\\ n_2\equiv q_2\bmod{8}}}\frac{\chi_2(n_2)}{\tau(n_2)}\right)\left(\sum_{\substack{n_3c_3\leq X^{1/2}\\ \mathrm{gcd}(n_3,r_3)=1\\ n_3\equiv q_3\bmod{8}}}\frac{g_3(n_3)}{\tau(n_3)}\right)\\&+O(X(\log X)^{2C_2})\\
\ll&_{C_1,C_2,C_3,C_4}\;\left(\frac{\tau(r_0)Q_0X^{1/2}}{c_0(\log X)^{C_4}}\right)\left(\frac{X^{1/2}}{c_1}\right)\left(\frac{\tau(r_2)Q_2X^{1/2}}{c_2(\log X)^{C_4}}\right)\left(\frac{X^{1/2}}{c_3}\right)\\
\ll&_{C_1,C_2,C_3,C_4}\;\frac{Q_0Q_2X^2}{c_0c_1c_2c_3(\log X)^{C_4}}.
\end{align*}
Note also that the assumption $r_i\leq (\log X)^{C_1}$ we may ignore the $\tau(r_i)$ upon choosing $C_4$ appropriately large, and since $c_i\leq (\log X)^{C_2}$ for all $i$ we may use the bound
\[
\frac{1}{(\log X/c_i)^{C_4}}\ll_{C_2,C_4}\frac{1}{(\log X)^{C'_4}}.
\]
We will use these remarks again without mentioning. Next we deal with $H_0(X)$. Add in the terms where $\|n_2d_2,n_3d_3\|\leq (\log X)^{C_3}$ at the cost of an error term of size $O(X(\log X)^{2C_3})$ by using a trivial bound. Then, performing the sum over $n_2$ and $n_3$ first we write,
\begin{align*}
    H_0(X) \ll_{C_1,C_3} \mathop{\sum\sum}_{\substack{n_0c_0,n_1c_1\leq X^{1/2}}}\left\lvert\sum_{\substack{n_2c_2\leq X/\|c_0n_0,c_1n_1\|\\ \mathrm{gcd}(n_2,r_2)=1\\ n_2\equiv q_2\bmod{8}}}\frac{\chi_2(n_2)}{\tau(n_2)}\right\rvert\left\lvert\sum_{\substack{n_3c_3\leq X/\|c_0n_0,c_1n_1\|\\ \mathrm{gcd}(n_3,r_3)=1\\ n_3\equiv q_3\bmod{8}}}\frac{g_3(n_3)}{\tau(n_3)}\right\rvert+O(X(\log X)^{2C_3}).
\end{align*}
Using a trivial bound for the sum over $n_3$ and Lemma \ref{Siegel--Walfisz1} for non-trivial characters (noting that the range is relatively large since $\|n_0c_0,n_1c_1\|\leq X^{1/2}$), we obtain
\begin{align*}
    H_0(X) \ll_{C_1,C_2,C_3,C_4}\;&\; \frac{\tau(r_2)Q_2X^2}{c_2c_3(\log X)^{2C_4+2}}\sum_{n_0c_0,n_1c_1\leq X^{1/2}}\frac{1}{\|n_0c_0,n_1c_1\|^2}\\
    \ll_{C_1,C_2,C_3,C_4}\;&\; \frac{Q_2X^2}{c_0c_1c_2c_3(\log X)^{C_4}}
\end{align*}
where we have used the straightforward bound
\[
\sum_{n_0c_0,n_1c_1\leq X^{1/2}}\frac{1}{\|n_0c_0,n_1c_1\|^2} \ll \frac{(\log X^{1/2}/c_0)(\log X^{1/2}/c_1)}{c_0c_1} \ll \frac{(\log X)^{2}}{c_0c_1}.
\]
For $H_1(X)$ we use an identical argument to that of $H_0(X)$ with $n_0,n_1$ switching roles with $n_2,n_3$. This yields
\[
H_1(X) \ll_{C_1,C_2,C_3,C_4}\;\; \frac{Q_0X^2}{c_0c_1c_2c_3(\log X)^{C_4}}.
\]
Combining these three bounds gives the result.
\end{proof}

To conclude this section we want to average this result over a small range of conductors. For this purpose it will be necessary to specialise to the case where the characters are Jacobi symbols. For $m\in\N$ odd, let $\psi_{m}(\cdot)$ denote generically either the Jacobi symbol $\left(\frac{\cdot}{m}\right)$ or the Jacobi symbol $\left(\frac{m}{\cdot}\right)$.

\begin{proposition}\label{symmetrictypeaverage1}
    Let $X\geq 3$, $C_1,C_2,C_3>0$ and fix odd integers $Q_0,Q_2$ and some $\b{q}\in(\Z/8\Z)^{*4}$, $\Tilde{\b{q}}\in(\Z/8\Z)^{*4}$. Fixing some odd integers $1\leq r_0,r_1,r_2,r_3\leq(\log X)^{C_1}$ such that $\mathrm{gcd}(r_i,Q_i)=1$ for $i=0,2$ and any $1\leq c_0,c_1,c_2,c_3\leq (\log X)^{C_2}$, $1\leq d_0,d_1,d_2,d_3\leq (\log X)^{C_3/2}$ we define, for any $\b{m}\in\N^4$,
    \[
    H(X,\b{m}) = \mathop{\sum\sum\sum\sum}_{\substack{\b{n}\in\N^{4}, \|n_0m_0c_0,n_1m_1c_1\|\cdot\|n_2m_2c_2,n_3m_3c_3\|\leq X\\ \eqref{section3sumconds}}}\frac{\psi_{Q_0m_0m_1}(n_2n_3)\psi_{Q_2m_2m_3}(n_0n_1)}{\tau(n_0)\tau(n_1)\tau(n_2)\tau(n_3)},
    \]
    where we use \eqref{section3sumconds} with $D=C_3$. Then for any $C_4>0$:
    \begin{align*} \mathop{\sum\sum\sum\sum}_{\substack{\b{m}\in\N^{4},\|m_0,m_1\|,\|m_2,m_3\|\leq (\log X)^{C_3}\\ \mathrm{gcd}(m_0m_1,Q_0r_2r_3)=\mathrm{gcd}(m_2m_3,Q_1r_0r_1)=1\\ \b{m}\equiv \Tilde{\bar{q}}\bmod{8}\\ Q_0m_0m_1\;\text{and}\;Q_2m_2m_3\neq 1}}& \frac{\mu^2(2m_0m_1m_2m_3)\lvert H(X,\b{m})\rvert}{\tau(m_0)\tau(m_1)\tau(m_2)\tau(m_3)} \ll_{C_1,C_2,C_3,C_4}\frac{Q_0Q_2X^2}{c_0c_1c_2c_3(\log X)^{C_4}}.
    \end{align*}
\end{proposition}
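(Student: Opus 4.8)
The plan is to establish a bound on $|H(X,\mathbf{m})|$ which is uniform in $\mathbf{m}$ and saves an arbitrarily large power of $\log X$, and then to sum trivially over the admissible vectors $\mathbf{m}$, of which there are only $O((\log X)^{4C_3})$, absorbing the loss into the saving.

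First I would fix $\mathbf{m}$ satisfying all the summation conditions and recast $H(X,\mathbf{m})$ into the form treated by Lemma~\ref{symmetric-hyperbola-sums}. Setting $c_i' = m_ic_i$, so that $1\leq c_i'\leq(\log X)^{C_2+C_3}$, the height condition in $H(X,\mathbf{m})$ reads $\|n_0c_0',n_1c_1'\|\cdot\|n_2c_2',n_3c_3'\|\leq X$, and splitting the Jacobi symbols multiplicatively,
\[
\psi_{Q_2m_2m_3}(n_0n_1) = \psi_{Q_2m_2m_3}(n_0)\,\psi_{Q_2m_2m_3}(n_1),\qquad \psi_{Q_0m_0m_1}(n_2n_3) = \psi_{Q_0m_0m_1}(n_2)\,\psi_{Q_0m_0m_1}(n_3),
\]
the summand of $H(X,\mathbf{m})$ becomes exactly of the shape in Lemma~\ref{symmetric-hyperbola-sums} with $\chi_0 = \psi_{Q_2m_2m_3}$, $g_1 = \psi_{Q_2m_2m_3}$, $\chi_2 = \psi_{Q_0m_0m_1}$, $g_3 = \psi_{Q_0m_0m_1}$, with $c_i'$ in place of $c_i$, and with the same $d_i$, $r_i$, $q_i$ and the same side conditions~\eqref{section3sumconds} (with $D=C_3$). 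Here $g_1,g_3$ are completely multiplicative with values in $\{-1,0,1\}$, so $|g_1|,|g_3|\leq1$; by quadratic reciprocity, and since the $n_i$ lie in fixed classes modulo $8$, the maps $n\mapsto\psi_{Q_2m_2m_3}(n)$ and $n\mapsto\psi_{Q_0m_0m_1}(n)$ differ from genuine real Dirichlet characters modulo $8Q_2m_2m_3$, respectively $8Q_0m_0m_1$, only by fixed signs that may be pulled outside $|H(X,\mathbf{m})|$; and the coprimality hypotheses needed for Lemma~\ref{symmetric-hyperbola-sums}, namely $\gcd(r_0,8Q_2m_2m_3)=\gcd(r_2,8Q_0m_0m_1)=1$, follow from $\gcd(r_i,Q_i)=1$ ($i=0,2$), the oddness of the $r_i$, and the coprimality conditions imposed on $\mathbf{m}$ in the outer sum.

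The only delicate point is that $\chi_0$ and $\chi_2$ must be non-principal. Since $\mu^2(2m_0m_1m_2m_3)=1$, the $m_i$ are odd, squarefree, pairwise coprime and coprime to $Q_0$ and $Q_2$; consequently $Q_2m_2m_3$ is a perfect square precisely when $Q_2$ is and $(m_2,m_3)=(1,1)$, and likewise $Q_0m_0m_1$ is a square precisely when $Q_0$ is and $(m_0,m_1)=(1,1)$. With the standing conditions $Q_0m_0m_1\neq1$ and $Q_2m_2m_3\neq1$, this shows $\chi_0$ and $\chi_2$ are non-principal for every admissible $\mathbf{m}$ as soon as $Q_0$ and $Q_2$ are not perfect squares exceeding $1$---which is the case in the intended application, where $Q_0$ and $Q_2$ are conductors of real characters. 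If one wants full generality, the $O((\log X)^{2C_3})$ exceptional $\mathbf{m}$ with $m_0=m_1=1$ or $m_2=m_3=1$ involve only one non-principal Jacobi symbol and are controlled instead by the asymmetric method of this section, causing no additional loss.

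Granting non-principality, I would apply Lemma~\ref{symmetric-hyperbola-sums} with its parameter ``$C_2$'' replaced by $C_2+C_3$ and its saving exponent taken to be an arbitrary $C_5>0$. Since $c_0'c_1'c_2'c_3' = m_0m_1m_2m_3\,c_0c_1c_2c_3$, the moduli $8Q_2m_2m_3$ and $8Q_0m_0m_1$ contribute $\ll Q_0Q_2m_0m_1m_2m_3$, the factors $m_i$ cancel, and one obtains
\[
|H(X,\mathbf{m})| \ll_{C_1,C_2,C_3,C_5} \frac{Q_0Q_2X^2}{c_0c_1c_2c_3(\log X)^{C_5}},
\]
uniformly in $\mathbf{m}$. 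Finally, the weight $\mu^2(2m_0m_1m_2m_3)/(\tau(m_0)\tau(m_1)\tau(m_2)\tau(m_3))$ is at most $1$, and there are at most $(\log X)^{4C_3}$ vectors $\mathbf{m}$ with $\|m_0,m_1\|,\|m_2,m_3\|\leq(\log X)^{C_3}$, so the left-hand side of the proposition is $\ll_{C_1,C_2,C_3,C_5}(\log X)^{4C_3}Q_0Q_2X^2/(c_0c_1c_2c_3(\log X)^{C_5})$; taking $C_5=C_4+4C_3$ yields the claim. The main obstacle is therefore not analytic at all---everything is reduced to Lemma~\ref{symmetric-hyperbola-sums}---but lies in carefully verifying, after the rescaling $c_i\mapsto m_ic_i$, that every hypothesis of that lemma (non-principality of the two Jacobi symbols, the coprimality constraints, and the size ranges on the parameters) continues to hold uniformly in $\mathbf{m}$.
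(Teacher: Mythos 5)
Your proposal is correct and follows essentially the same route as the paper: fix $\b{m}$, absorb it into the constants via $c_i\mapsto m_ic_i$, invoke Lemma~\ref{symmetric-hyperbola-sums} with saving exponent $C_4+4C_3$ so that the moduli contribute $Q_0Q_2m_0m_1m_2m_3$ while the rescaled $c_i$ cancel the $m_i$, and then sum trivially over the $O((\log X)^{4C_3})$ admissible vectors $\b{m}$. Your extra remark about non-principality failing when $Q_0$ or $Q_2$ is a perfect square greater than $1$ is a legitimate refinement of a hypothesis the paper's proof passes over silently (it asserts non-principality directly from the gcd, squarefreeness and $Q_0m_0m_1,Q_2m_2m_3\neq 1$ conditions), and your handling of it is consistent with the intended application.
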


\begin{proof}
    By the $\mathrm{gcd}$ conditions on $Q_i$ and the $m_i$, the condition that $Q_0m_0m_1$ and $Q_2m_2m_3 \neq 1$ and the term $\mu^2(m_0m_1m_2m_3)=1$, we know that for each $\b{m}$ considered in the average, the quadratic characters $\psi_{Q_0m_0m_1}$ and $\psi_{Q_1m_2m_3}$ are non-principal. Therefore, Lemma \ref{symmetric-hyperbola-sums} tells us that, for each $\b{m}$ considered in the average,
    \begin{align*}
    H(X,\b{m}) &\ll_{C_1,C_2,C_3,C_4} \frac{Q_0Q_2m_0m_1m_2m_3X^2}{m_0m_1m_2m_3c_0c_1c_2c_3(\log X)^{C_4+4C_3}}\\
    &\ll_{C_1,C_2,C_3,C_4} \frac{Q_0Q_2X^2}{c_0c_1c_2c_3(\log X)^{C_4+4C_3}}.
    \end{align*}
    Therefore, by summing over the given $\b{m}$ the average can be seen to be bounded by
    \begin{align*}    &\ll_{C_1,C_2,C_3,C_4}\frac{Q_0Q_2X^2(\log X)^{4C_3}}{c_0c_1c_2c_3(\log X)^{C_4+4C_3}}\ll_{C_1,C_2,C_3,C_4} \frac{Q_0Q_2X^2}{c_0c_1c_2c_3(\log X)^{C_4}}.
    \end{align*}
\end{proof}

\subsection{Small Conductors: Asymmetric Hyperbola Method}
We begin with a technical lemma similar in form to Lemma \ref{maintermlemma1}.

\begin{lemma}\label{non-symmetric-hyperbola-sums-mainterm1}
Let $X\geq 3$, $C_1,C_2>0$, $0<\epsilon<1$ and define $Y=\exp((\log X)^{\epsilon})$. Fix some odd integers $Q_0$, $Q_1$ and some $\b{q}\in(\Z/8\Z)^{*2}$ and suppose $\chi_0$ and $\chi_1$ are non-principal characters modulo $Q_0$ and $Q_1$ respectively. Then for any odd integers $1\leq r_0,r_1\leq(\log X)^{C_1}$ and any fixed integers $1\leq c_0,c_1\leq (\log X)^{C_2/16}$, $1\leq d_0,d_1\leq (\log X)^{C_2/4}$ we have
\begin{equation*}
\mathop{\sum\sum}_{\substack{n_0c_0,n_1c_1\leq Y\\ \|n_0d_0,n_1d_1\|>(\log X)^{C_2}\\ \mathrm{gcd}(n_i,r_i)=1\;\forall 0\leq i\leq 1\\ n_i\equiv q_i\bmod{8}\;\forall 0\leq i\leq 1}}\frac{\chi_0(n_0)\chi_1(n_1)}{\|n_0c_0,n_1c_1\|^2\tau(n_0)\tau(n_1)} \ll_{C_1,C_2,C_3} \frac{\tau(r_0)\tau(r_1)(Q_0+Q_1)}{c_0c_1(\log\log X)^{C_3}}
\end{equation*}
for any $C_3>0$ where the implied constant depends at most on $C_1,C_2$ and $C_3$.
\end{lemma}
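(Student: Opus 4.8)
The plan is to exploit the oscillation of the relevant non-principal character in the \emph{short} variable of the hyperbola through Lemma~\ref{Siegel--Walfisz1}, and then to absorb the outer sum by a partial summation whose convergence is furnished by the $1/\tau$ weight. First I would split by the symmetry of the maximum, writing the sum as $\Sigma_1+\Sigma_2$, where $\Sigma_1$ consists of the terms with $n_1c_1\leq n_0c_0$ (so $\|n_0c_0,n_1c_1\|=n_0c_0$) and $\Sigma_2$ the terms with $n_0c_0<n_1c_1$. Interchanging the indices $0\leftrightarrow 1$ (and correspondingly $r_0\leftrightarrow r_1$, $c_0\leftrightarrow c_1$, $d_0\leftrightarrow d_1$, $\chi_0\leftrightarrow \chi_1$, $q_0\leftrightarrow q_1$) turns $\Sigma_2$ into a sum of exactly the shape of $\Sigma_1$, so it suffices to bound $\Sigma_1$; this uses only that $\chi_1$ is non-principal and carries a factor $Q_1$, while $\Sigma_2$ produces the $Q_0$ term. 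Replacing $r_i$ by its prime-to-$Q_i$ part, which only decreases $\tau(r_i)$ since $\chi_i$ vanishes off integers not coprime to $Q_i$, we may assume $\gcd(r_i,Q_i)=1$; and we may discard the constraint $n_0c_0\leq Y$, which is merely an upper bound on the outer variable.

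In $\Sigma_1$, fix $n_0$: the inner sum is $\sum_{n_1}\chi_1(n_1)/\tau(n_1)$ over $n_1\leq n_0c_0/c_1$ with $\gcd(n_1,r_1)=1$, $n_1\equiv q_1\bmod 8$, and the coupled condition $\|n_0d_0,n_1d_1\|>(\log X)^{C_2}$. I would remove this last condition by a dichotomy: if $n_0d_0>(\log X)^{C_2}$ it holds automatically, so the inner sum is complete up to $n_0c_0/c_1$; if $n_0d_0\leq(\log X)^{C_2}$ it forces $n_1>(\log X)^{C_2}/d_1$, so the inner sum is the difference of the complete sums up to $n_0c_0/c_1$ and up to $(\log X)^{C_2}/d_1$ (and vanishes unless $n_0c_0/c_1>(\log X)^{C_2}/d_1$). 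The exponent restrictions $c_i\leq(\log X)^{C_2/16}$ and $d_i\leq(\log X)^{C_2/4}$ are calibrated precisely so that in every case the upper limit $T$ of the complete sum satisfies $T\gg(\log X)^{11C_2/16}$, hence $\log T\gg_{C_2}\log\log X$. As $\chi_1$ is non-principal and $Q_1$ is odd, $\chi_1\chi'$ is non-principal for every character $\chi'$ modulo $8$, so the main term of Lemma~\ref{Siegel--Walfisz1} vanishes and each complete sum is $\ll_C\tau(r_1)Q_1\,T/(\log T)^C$ for any $C>0$.

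Substituting these bounds and summing over $n_0$ (trivially in the $\chi_0$-aspect), using in the first alternative that $n_0>(\log X)^{3C_2/4}\geq c_1^{12}$, whence $\log(n_0c_0/c_1)\gg\log n_0$, and in the second alternative that $n_0$ is confined to a range inside $[1,(\log X)^{C_2}]$ on which $(\log T)^{-C}\ll_{C_2}(\log\log X)^{-C}$, one obtains
\[
\Sigma_1\ \ll_C\ \frac{\tau(r_1)Q_1}{c_0c_1}\left(\sum_{n_0>(\log X)^{3C_2/4}}\frac{1}{n_0\tau(n_0)(\log n_0)^{C}}+\frac{1}{(\log\log X)^{C}}\sum_{n_0\leq(\log X)^{C_2}}\frac{1}{n_0\tau(n_0)}\right).
\]
By $\sum_{n\leq t}1/\tau(n)\ll t/\sqrt{\log t}$ (Lemma~\ref{Siegel--Walfisz1} with the trivial character) and partial summation, the first sum is $\ll_C\int_{W}^{\infty}\bigl(t(\log t)^{C+1/2}\bigr)^{-1}\,dt\ll_C(\log W)^{-(C-1/2)}$ with $W=(\log X)^{3C_2/4}$, hence $\ll_{C,C_2}(\log\log X)^{-(C-1/2)}$ (the integral converging at infinity since $C>1/2$), while the second sum is $\ll_{C_2}\sqrt{\log\log X}$. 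Therefore $\Sigma_1\ll_{C,C_2}\tau(r_1)Q_1/\bigl(c_0c_1(\log\log X)^{C-1/2}\bigr)$, and symmetrically $\Sigma_2\ll_{C,C_2}\tau(r_0)Q_0/\bigl(c_0c_1(\log\log X)^{C-1/2}\bigr)$; taking $C=C_3+1$ and using $\tau(r_i)\geq 1$ gives the stated estimate.

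The main obstacle is resisting the naive bound $\sum_{n_0\leq Y/c_0}1/(n_0\tau(n_0))\ll\sqrt{\log Y}=(\log X)^{\epsilon/2}$ for the outer sum, which overwhelms any fixed power of $\log\log X$; the device is to keep the Siegel--Walfisz factor $(\log n_0)^{-C}$ attached to each term and only then perform the partial summation, so that the logarithmically divergent harmonic-type sum is tamed and the answer ends up governed by $\log$ of the lower cutoff $(\log X)^{3C_2/4}$, i.e.\ by $\log\log X$. The only genuine bookkeeping is checking that the case split around the constraint $\|n_0d_0,n_1d_1\|>(\log X)^{C_2}$ always leaves the inner character sum supported on a range whose length is a fixed positive power of $\log X$ — which is exactly what the exponents $C_2/16$ and $C_2/4$ buy.
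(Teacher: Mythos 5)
Your proof is correct and takes essentially the same route as the paper: the heart in both is to apply Lemma \ref{Siegel--Walfisz1} to the inner sum over the non-principal character, whose range is guaranteed (by the calibration $c_i\leq(\log X)^{C_2/16}$, $d_i\leq(\log X)^{C_2/4}$, and the difference-of-two-complete-sums device where the lower-bound constraint bites) to have length at least a fixed power of $\log X$, and then to sum the outer variable as the tail of a convergent series so that the cutoff $(\log X)^{cC_2}$ delivers the $(\log\log X)^{-C_3}$ saving. The only difference is organisational: you split solely on the $c$-maximum and then dichotomise on whether $n_0d_0>(\log X)^{C_2}$, while the paper splits on both maxima simultaneously into the sums $H_1,H_2,H_3$; the treatment of each resulting piece is the same.
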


\begin{remark}
    The philosophy with this sum, as with many others like it that appear throughout this section, is that it should converge and so the lower bounds should yield some saving. To see this, note that the sums considered above are similar to
    \[
    \mathop{\sum\sum}_{\substack{n_0c_0,n_1c_1\leq Y\\ \mathrm{gcd}(n_i,r_i)=1\;\forall 0\leq i\leq 1\\ n_i\equiv q_i\bmod{8}\;\forall 0\leq i\leq 1}}\frac{\chi_0(n_0)\chi_1(n_1)}{n_0n_1c_0c_1\tau(n_0)\tau(n_1)}
    \]
    which is separable in each variable and is more readily seen to converge by comparing the two sums to the Dirichlet series
    \[
    D(1,\chi_i) = \sum_{n=1}^{\infty}\frac{\chi_i(n)}{n\tau(n)}.
    \]
    More will be said about this idea in the third part of \S \ref{hyperbolicanalysis2}.
\end{remark}

\begin{proof}
    Call the sum $H(X)$. The key difference to the above remark is that we run into some difficulty when trying to untangle the constants $c_0,d_0,c_1$ and $d_1$. Indeed we have to split both maximums simultaneously in order to obtain sums of a familiar form. We have four cases:
    \begin{itemize}
        \item[$(1)$] $n_0c_0 \geq n_1c_1$ and $n_0d_0 \geq n_1d_1$;
        \item[$(2)$] $n_1c_1 > n_0c_0$ and $n_1d_1 > n_0d_0$;
        \item[$(3)$] $n_0c_0 \geq n_1c_1$ and $n_1d_1 > n_0d_0$;
        \item[$(4)$] $n_1c_1 > n_0c_0$ and $n_0d_0 \geq n_1d_1$.
    \end{itemize}
    We note immediately that the conditions of $(3)$ imply that $\frac{d_1}{d_0}>\frac{c_1}{c_0}$
    while the conditions of $(4)$ imply $\frac{d_1}{d_0}<\frac{c_1}{c_0}$ and so only one of them will apply. Without loss in generality, we will assume that $(3)$ case holds. Now we split $H(X)$ into regions in which $n_0$ and $n_1$ satisfy these conditions. We have
    \[
    H(X) = H_1(X) + H_2(X) + H_3(X).
    \]
    where
    \[
    H_1(X) = \mathop{\sum}_{\substack{(\log X)^{C_2}/d_0<n_0\leq Y/c_0\\ \mathrm{gcd}(n_0,r_0)=1\\ n_0\equiv q_0\bmod{8}}}\mathop{\sum}_{\substack{n_1\leq \min(n_0d_0/d_1,n_0c_0/c_1)\\ \mathrm{gcd}(n_1,r_1)=1\\ n_1\equiv q_1\bmod{8}}}\frac{\chi_0(n_0)\chi_1(n_1)}{n_0^2c_0^2\tau(n_0)\tau(n_1)},
    \]
    \[
    H_2(X) = \mathop{\sum}_{\substack{(\log X)^{C_2}/d_1<n_1\leq Y/c_1\\ \mathrm{gcd}(n_1,r_1)=1\\ n_1\equiv q_1\bmod{8}}}\mathop{\sum}_{\substack{n_0< \min(n_1d_1/d_0,n_1c_1/c_0)\\ \mathrm{gcd}(n_1,r_1)=1\\ n_1\equiv q_1\bmod{8}}}\frac{\chi_0(n_0)\chi_1(n_1)}{n_1^2c_1^2\tau(n_0)\tau(n_1)},
    \]
    and
    \[
    H_3(X) = \mathop{\sum}_{\substack{n_0\leq Y/c_0\\ \mathrm{gcd}(n_0,r_0)=1\\ n_0\equiv q_0\bmod{8}}}\mathop{\sum}_{\substack{\|(\log X)^{C_2}/d_1,n_0d_0/d_1\|<n_1\leq n_0c_0/c_1\\ \mathrm{gcd}(n_1,r_1)=1\\ n_1\equiv q_1\bmod{8}}}\frac{\chi_0(n_0)\chi_1(n_1)}{n_0^2c_0^2\tau(n_0)\tau(n_1)}.
    \]
    The sums $H_1(X)$ and $H_2(X)$ may be dealt with similarly. For $H_1(X)$ we note that in these regions $\min(\frac{n_0d_0}{d_1},\frac{n_0c_0}{c_1})>(\log X)^{3C_2/4}$, and so the range over $n_1$ increases with $X$. We therefore apply Lemma \ref{Siegel--Walfisz1} to it to obtain:
    \[
    H_1(X) \ll_{C_3} \mathop{\sum}_{\substack{(\log X)^{C_2}/d_0<n_0}} \frac{\tau(r_1)Q_1\min(d_0/d_1,c_0/c_1)}{n_0c_0^2(\log (n_0\min(d_0/d_1,c_0/c_1)))^{C_3+1}},
    \]
    for any $C_3>0$. Now we use $\min(d_0/d_1,c_0/c_1) \leq c_0/c_1$ to bound the numerator; for the denominator, we have that $n_0 > (\log X)^{C_2}/d_0 \geq (\log X)^{3C_2/4} \geq \min(d_0/d_1,c_0/c_1)^2$ so we may obtain the bound
    \[
    [\log (n_0\min(d_0/d_1,c_0/c_1))]^{-C_3-1} =\left[(\log n_0)\hspace{-2pt}\left(\hspace{-2pt}1+\frac{\log (\min(d_0/d_1,c_0/c_1))}{\log n_0}\right)\hspace{-2pt}\right]^{-C_3-1} \hspace{-10pt}\ll_{C_3} (\log n_0)^{-C_3-1}.
    \]
    Alternatively, one could bound the denominator using a Taylor series expansion. This will yield,
    \[
    H_1(X) \ll_{C_3} \mathop{\sum}_{\substack{(\log X)^{C_2}/d_0<n_0}} \frac{\tau(r_1)Q_1}{n_0c_0c_1(\log 
    n_0)^{C_3+1}}.
    \]
    Since $C_3>0$, this is the tail of a convergent series and so we obtain
    \[
    H_1(X) \ll_{C_2,C_3} \frac{\tau(r_1)Q_1}{c_0c_1(\log\log X)^{C_3}}.
    \]
    Similarly,
    \[
    H_2(X)_{C_2,C_3} \ll_{C_2,C_3}\frac{\tau(r_0)Q_0}{c_0c_1(\log\log X)^{C_3}}.
    \]
    We now turn to $H_3(X)$. First note that this sum is $0$ unless
    \[
    \frac{n_0c_0}{c_1}\geq \frac{(\log X)^{C_2}}{d_1}.
    \]
    We may therefore add this condition to the sum of $H_3(X)$ with no cost. This will lead to:
    \[
    H_3(X) = \mathop{\sum}_{\substack{c_1(\log X)^{C_2}/c_0d_1<n_0\leq Y/c_0\\ \mathrm{gcd}(n_0,r_0)=1\\ n_0\equiv q_0\bmod{8}}}\mathop{\sum}_{\substack{\|(\log X)^{C_2}/d_1,n_0d_0/d_1\|<n_1\leq n_0c_0/c_1\\ \mathrm{gcd}(n_1,r_1)=1\\ n_1\equiv q_1\bmod{8}}}\frac{\chi_0(n_0)\chi_1(n_1)}{n_0^2c_0^2\tau(n_0)\tau(n_1)}.
    \]
    In this case it is unclear whether or not the range over $n_1$ is guaranteed to increase with $X$. In order to deal with this we write the sum over $n_1$ as
    \[
    \mathop{\sum}_{\substack{n_1\leq n_0c_0/c_1\\ \mathrm{gcd}(n_1,r_1)=1\\ n_1\equiv q_1\bmod{8}}}\frac{\chi_1(n_1)}{\tau(n_1)} - \mathop{\sum}_{\substack{n_1\leq \|(\log X)^{C_2}/d_1,n_0d_0/d_1\|\\ \mathrm{gcd}(n_1,r_1)=1\\ n_1\equiv q_1\bmod{8}}}\frac{\chi_1(n_1)}{\tau(n_1)}.
    \]
    Both of these are sums with ranges which grow with $X$ and so we may apply Lemma \ref{Siegel--Walfisz1} to them. Upon doing this to the first sum above and then summing over $n_0$ we obtain,
    \begin{align*}
    \ll_{C_3} \mathop{\sum}_{\substack{(\log X)^{11C_2/16}<n_0\leq Y}}\frac{\tau(r_1)Q_1}{n_0c_0c_1(\log n_0c_0/c_1)^{C_3+1}} &\ll_{C_2,C_3} \frac{\tau(r_1)Q_1}{c_0c_1(\log \log X)^{C_3}}
    \end{align*}
    using the usual Taylor series expansion to deal with the $c_i$ inside of the logarithm. Note also that we have extended the range over $n_0$ by positivity of the summand. Now applying Lemma \ref{Siegel--Walfisz1} to the second sum above and summing over $n_0$ we obtain (extending the range as above):
    \[
    \ll_{C_3} \mathop{\sum}_{\substack{(\log X)^{11C_2/16}<n_0\leq Y}}\frac{\tau(r_1)Q_1\|(\log X)^{C_2}/d_1,n_0d_0/d_1\|}{n_0^2c_0^2(\log \|(\log X)^{C_2}/d_1,n_0d_0/d_1\|)^{C_3}}.
    \]
    Here we use the upper bound
    \[
    \|(\log X)^{C_2}/d_1,n_0d_0/d_1\| \leq \frac{n_0c_0}{c_1}
    \]
    for the numerator and the lower bound
    \[
    \|(\log X)^{C_2}/d_1,n_0d_0/d_1\| \geq \frac{n_0d_0}{d_1}
    \]
    and the usual Taylor series expansion for the denominator. Then the above is
    \[
    \mathop{\sum}_{\substack{(\log X)^{11C_2/16}<n_0\leq Y}}\frac{\tau(r_1)Q_1}{n_0c_0c_1(\log n_0)^{C_3}} \ll_{C_2,C_3} \frac{\tau(r_1)Q_1}{c_0c_1(\log \log X)^{C_3}}.
    \]
    This concludes the proof.
\end{proof}

Next we prove a similar result to Lemma \ref{symmetric-hyperbola-sums}: 

\begin{lemma}\label{non-symmetric-hyperbola-sums}
Let $X\geq 3$, $C_1,C_2>0$ and fix some odd integers $Q_0$, $Q_1$ and some $\b{q}\in(\Z/8\Z)^{*4}$. Suppose $\chi_0$ and $\chi_1$ are non-principal characters modulo $Q_0$ and $Q_1$ respectively. Then for any odd integers $1\leq r_0,r_1,r_2,r_3\leq(\log X)^{C_1}$ and any fixed integers $1\leq c_0,c_1,c_2,c_3\leq (\log X)^{C_2/16}$, $1\leq d_0,d_1,d_2,d_3\leq (\log X)^{C_2/4}$ we have
    \[
    \mathop{\sum\sum\sum\sum}_{\substack{\b{n}\in\N^4, \|n_0c_0,n_1c_1\|\cdot\|n_2c_2,n_3c_3\|\leq X\\ \eqref{section3sumconds}}} \frac{\chi_0(n_0)\chi_1(n_1)}{\tau(n_0)\tau(n_1)\tau(n_2)\tau(n_3)} \ll_{C_1,C_2,C_3} \;\frac{\tau(r_0)\tau(r_1)(Q_0+Q_1)X^2}{c_0c_1c_2c_3(\log X)(\log \log X)^{C_3}}
    \]
    for any $C_3>0$, where the implied constant depends at most on $C_1,C_2$ and $C_3$. Here we use \eqref{section3sumconds} with $D=C_2$.
\end{lemma}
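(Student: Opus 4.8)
The plan is to run the hyperbola method, Lemma~\ref{hyperbolamethod}, with threshold $Y=\exp((\log X)^{\epsilon})$ for the same $\epsilon\in(0,1)$ used in Lemma~\ref{non-symmetric-hyperbola-sums-mainterm1} (so $2\leq Y\leq X^{1/2}$ once $X$ is large; for bounded $X$ the statement is trivial). This writes the sum as $H_0(X)+H_1(X)-H_2(X)$, where in $H_0$ one has $\|n_0c_0,n_1c_1\|\leq Y$ and $\|n_2c_2,n_3c_3\|\leq X/\|n_0c_0,n_1c_1\|$; in $H_1$ one has $\|n_2c_2,n_3c_3\|\leq X/Y$ and $\|n_0c_0,n_1c_1\|\leq X/\|n_2c_2,n_3c_3\|$; and $H_2$ is the product box $\|n_0c_0,n_1c_1\|\leq Y$, $\|n_2c_2,n_3c_3\|\leq X/Y$. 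In each piece the constraint $\max(a,b)\leq T$ splits into $a\leq T$ and $b\leq T$, so once the coupling conditions $\|n_0d_0,n_1d_1\|>(\log X)^{C_2}$ and $\|n_2d_2,n_3d_3\|>(\log X)^{C_2}$ are stripped off, the four-variable sum factors into one-variable sums. Each coupling condition is put back by adding the complementary terms, which confine a pair of variables to a box of size $\ll(\log X)^{2C_2}$; multiplied by the trivial bound for the remaining variables (a box of volume $\leq X^2/Y^2=X^2\exp(-2(\log X)^{\epsilon})$ in $H_1,H_2$, or $\leq Y^2$ for the $(n_0,n_1)$ factor in $H_0$, against $1/\tau$ weights) this contributes only $X^{o(1)}$ and so is negligible against the claimed bound.

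$H_1(X)$ and $H_2(X)$ are disposed of by a single appeal to cancellation. In $H_1$, fixing $n_2,n_3$ leaves the separable $(n_0,n_1)$-sum over the box $n_0c_0,n_1c_1\leq T'$ with $T'=X/\|n_2c_2,n_3c_3\|\geq Y$; apply Lemma~\ref{Siegel--Walfisz1} (its cancellation bound with $C$ taken large, valid since $\log(T'/c_0)\gg(\log X)^{\epsilon}$) to the non-principal $n_0$-sum and bound the $n_1$-sum trivially, obtaining $\ll\tau(r_0)Q_0(T')^2(\log X)^{-\epsilon C}/(c_0c_1)$; then sum over $n_2,n_3$ using $(T')^2=X^2\|n_2c_2,n_3c_3\|^{-2}$ and $\mathop{\sum\sum}_{\|n_2c_2,n_3c_3\|\leq X/Y}\|n_2c_2,n_3c_3\|^{-2}(\tau(n_2)\tau(n_3))^{-1}\ll\log\log X/(c_2c_3)$ (Lemma~\ref{maintermlemma1}). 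In $H_2$ the four variables fully separate and the same cancellation on the $n_0$-sum (range $\leq Y/c_0$, logarithm $\asymp(\log X)^{\epsilon}$) with trivial bounds elsewhere gives an even smaller quantity. In both cases one lands on $\ll\tau(r_0)Q_0X^2(\log\log X)(\log X)^{-\epsilon C}/(c_0c_1c_2c_3)$, and the symmetric argument (cancelling the $\chi_1$-sum instead) gives the same with $\tau(r_1)Q_1$; choosing $C$ large relative to $C_3$ absorbs both into the error term of the lemma, only $Q_0+Q_1$ (never $Q_0Q_1$) appearing since one character suffices at a time.

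The substance is $H_0(X)$. After factoring, fix $n_0,n_1$ with $\|n_0c_0,n_1c_1\|\leq Y$, $\|n_0d_0,n_1d_1\|>(\log X)^{C_2}$, and evaluate the separable sum over $n_2,n_3$ by Lemma~\ref{Siegel--Walfisz1} applied to each of $n_2,n_3$, absorbing the arbitrary power-of-$\log$ error exactly as in the proof of Proposition~\ref{maintermproposition} (using $\tau(r_i)\ll_{C_1}\log X$); with $T=X/\|n_0c_0,n_1c_1\|$ this sum equals a constant times $X^2c_2^{-1}c_3^{-1}\|n_0c_0,n_1c_1\|^{-2}(\log T)^{-1}$ up to a relative error $O((\log X)^{-1})$. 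Since $\|n_0c_0,n_1c_1\|\leq Y$ gives $\log T=\log X+O((\log X)^{\epsilon})$, a geometric-series expansion replaces $(\log T)^{-1}$ by $(\log X)^{-1}$; the error so introduced, together with the relative error above, is crude enough to be bounded after inserting $|\chi_0\chi_1|\leq 1$ and using $\mathop{\sum\sum}_{\|n_0c_0,n_1c_1\|\leq Y}\|n_0c_0,n_1c_1\|^{-2}(\tau(n_0)\tau(n_1))^{-1}\ll\log\log X/(c_0c_1)$, and one checks it is $\ll X^2(\log\log X)/(c_0c_1c_2c_3(\log X)^{2-\epsilon})$, which is admissible. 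What remains is
\[
H_0(X)\ll\frac{X^2}{c_2c_3\log X}\mathop{\sum\sum}_{\substack{\|n_0c_0,n_1c_1\|\leq Y,\ \|n_0d_0,n_1d_1\|>(\log X)^{C_2}\\ \mathrm{gcd}(n_i,r_i)=1,\ n_i\equiv q_i\bmod{8},\ i=0,1}}\frac{\chi_0(n_0)\chi_1(n_1)}{\|n_0c_0,n_1c_1\|^2\tau(n_0)\tau(n_1)}+(\text{negligible}),
\]
and the double sum is precisely the one bounded by Lemma~\ref{non-symmetric-hyperbola-sums-mainterm1} (its hypotheses $c_i\leq(\log X)^{C_2/16}$, $d_i\leq(\log X)^{C_2/4}$ match ours), giving $\ll\tau(r_0)\tau(r_1)(Q_0+Q_1)/(c_0c_1(\log\log X)^{C_3})$. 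Combining the three pieces gives the lemma.

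The main obstacle is the final step of $H_0$: one must not apply the triangle inequality to the $(n_0,n_1)$-sum, since that would replace the convergent, $(\log\log X)^{-C_3}$-saving double sum of Lemma~\ref{non-symmetric-hyperbola-sums-mainterm1} by something of size $\log\log X$. So the $(n_2,n_3)$-evaluation has to be detached from $(n_0,n_1)$ without destroying the character $\chi_0\chi_1$, which is exactly what the geometric-series manoeuvre accomplishes; everything else is routine bookkeeping of the congruence, coprimality and $d_i$ conditions.
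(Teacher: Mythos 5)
Your proposal is correct and follows essentially the same route as the paper: the hyperbola method with $Y=\exp((\log X)^{\epsilon})$, evaluation of the $(n_2,n_3)$-sum in $H_0$ via Lemma~\ref{Siegel--Walfisz1} so as to preserve the oscillation in $(n_0,n_1)$, the Taylor/geometric expansion of the logarithm, and then Lemma~\ref{non-symmetric-hyperbola-sums-mainterm1} for the main term with Lemma~\ref{maintermlemma1} for the error, exactly as in the paper. The only (harmless) deviation is in $H_1$ and $H_2$, where you exploit cancellation in a single character to get $Q_0+Q_1$ rather than following the symmetric-case argument that yields $Q_0Q_1$ with arbitrary logarithmic saving; both versions are admissible for the stated bound.
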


\begin{proof}
Once more, let the sum be denoted by $H(X)$. Defining the parameter $Y=\exp((\log X)^{\epsilon})$ for some $0<\epsilon<1$ we use the hyperbola method to write
\[
H(X) = H_0(X) + H_1(X) - H_2(X)
\]
where $H_0(X),H_1(X)$ and $H_2(X)$ are
\[
\mathop{\sum\sum\sum\sum}_{\substack{\|n_0c_0,n_1c_1\|\leq Y\\ \|n_2c_2,n_3c_3\|\leq X/\|c_0n_0,c_1n_1\|\\ \eqref{section3sumconds}}}\frac{\chi_0(n_0)\chi_1(n_1)}{\tau(n_0)\tau(n_1)\tau(n_2)\tau(n_3)},\;\;
\mathop{\sum\sum\sum\sum}_{\substack{\|n_2c_2,n_3c_3\|\leq X/Y\\ \|n_0c_0,n_1c_1\|\leq X/\|n_2c_2,n_3c_3\|\\ \eqref{section3sumconds}}}\frac{\chi_0(n_0)\chi_1(n_1)}{\tau(n_0)\tau(n_1)\tau(n_2)\tau(n_3)}
\]
and
\[
\mathop{\sum\sum\sum\sum}_{\substack{\|n_0c_0,n_1c_1\|\leq Y\\ \|n_2c_2,n_3c_3\|\leq X/Y \\ \eqref{section3sumconds}}}\frac{\chi_0(n_0)\chi_1(n_1)}{\tau(n_0)\tau(n_1)\tau(n_2)\tau(n_3)}
\]
respectively. Following the same strategy as in the proof of Lemma \ref{symmetric-hyperbola-sums}, we may obtain
\begin{align*}
H_1(X),H_2(X)\ll_{C_1,C_2,C_3}\frac{\tau(r_0)\tau(r_1)Q_0Q_1X^2}{c_0c_1c_2c_3(\log X)^{4C_3}}.
\end{align*}
Unlike in Lemma \ref{symmetric-hyperbola-sums} however, $H_0(X)$ and $H_1(X)$ are not symmetric. Trying to use the same method as before for $H_0(X)$ will result in a bound of $X^2(\log X)^2$ which is too big. This is because we lose the information of the characters when we apply the triangle inequality and trivial bounds on the sum over $n_2$ and $n_3$. In order to maintain this information and obtain some saving over the character sum we will instead use Lemma \ref{Siegel--Walfisz1} to provide an asymptotic for the inner sum. Using the trivial bound \eqref{section3smallsum}, we first add in the terms for which $\|n_2d_2,n_3d_3\|\leq (\log X)^{C_2}$ at the cost of a small error term:
\[
H_0(X) = \mathop{\sum\sum\sum\sum}_{\substack{\|n_0c_0,n_1c_1\|\leq Y\\ \|n_2c_2,n_3c_3\|\leq X/\|c_0n_0,c_1n_1\|\\ \|n_0d_0,n_1d_1\|>(\log X)^{C_2}\\ \mathrm{gcd}(n_i,r_i)=1\;\forall 0\leq i\leq 3\\ n_i\equiv q_i\bmod{8}\;\forall 0\leq i\leq 3}}\frac{\chi_0(n_0)\chi_1(n_1)}{\tau(n_0)\tau(n_1)\tau(n_2)\tau(n_3)} + O(X(\log X)^{2C_2}).
\]
The lower bound $\|n_0d_0,n_1d_1\|>(\log X)^{C_2}$ cannot be removed as easily, and is in fact necessary for the result to hold. Using Lemma \ref{Siegel--Walfisz1} on the sum over $n_2$ and $n_3$ we obtain
\begin{align*}
    H_0(X) &= \frac{\mathfrak{S}_0(r_2)\mathfrak{S}_0(r_3) X^2}{\phi(8)^2c_2c_3}\mathop{\sum\sum}_{\substack{n_0c_0,n_1c_1\leq Y\\ \|n_0d_0,n_1d_1\|>(\log X)^{C_2}\\ \mathrm{gcd}(n_i,r_i)=1\;\forall 0\leq i\leq 1\\ n_i\equiv q_i\bmod{8}\;\forall 0\leq i\leq 1}}\frac{\chi_0(n_0)\chi_1(n_1)}{\|n_0c_0,n_1c_1\|^2\tau(n_0)\tau(n_1)(\log (X/c_2c_3\|n_0c_0,n_1c_1\|))} \\
    &+O\left(\frac{X^2(\log \log 3r_2r_3)^{3/2}}{c_2c_3(\log X)^{2}}\mathop{\sum\sum}_{\substack{n_0c_0,n_1c_1\leq Y}}\frac{1}{\|n_0c_0,n_1c_1\|^2\tau(n_0)\tau(n_1)}\right). 
\end{align*}
Here we have to be careful with the logarithmic term in the denominator of the ``main term''. To deal with this we note that, since $c_2c_3\|n_0c_0,n_1c_1\| \ll Y(\log X)^{C_2/8}$, we may write:
\[
\frac{1}{(\log (X/c_2c_3\|n_0c_0,n_1c_1\|))} = \frac{1}{(\log X)}+O\left(\frac{(\log(c_2c_3\|n_0c_0,n_1c_1\|))}{(\log X)^2}\right) = \frac{1}{(\log X)} + O\left(\frac{1}{(\log X)^{2-\epsilon}}\right).
\]
Substituting this into the expression for $H_0(X)$ we will get
\begin{align*}
    H_0(X) \ll& \frac{X^2}{c_2c_3(\log X)}M_0(X) + O\left(\frac{X^2(\log \log 3r_2r_3)^{3/2}}{c_2c_3(\log X)^{2-\epsilon}}E_0(X)\right) 
\end{align*}
where $M_0(X)$ and $E_0(X)$ are
\begin{equation*}
\mathop{\sum\sum}_{\substack{n_0c_0,n_1c_1\leq Y\\ \|n_0d_0,n_1d_1\|>(\log X)^{C_2}\\ \mathrm{gcd}(n_i,r_i)=1\;\forall 0\leq i\leq 1\\ n_i\equiv q_i\bmod{8}\;\forall 0\leq i\leq 1}}\frac{\chi_0(n_0)\chi_1(n_1)}{\|n_0c_0,n_1c_1\|^2\tau(n_0)\tau(n_1)}\;\;\textrm{and}\;\;
\mathop{\sum\sum}_{\substack{n_0c_0,n_1c_1\leq Y\\ \mathrm{gcd}(n_i,r_i)=1\;\forall 0\leq i\leq 1\\ n_i\equiv q_i\bmod{8}\;\forall 0\leq i\leq 1}}\frac{1}{\|n_0c_0,n_1c_1\|^2\tau(n_0)\tau(n_1)}
\end{equation*}
respectively. For $M_0(X)$ we apply Lemma \ref{non-symmetric-hyperbola-sums-mainterm1}, giving:
\[
M_0(X) \ll_{C_2,C_3} \frac{\tau(r_0)\tau(r_1)(Q_0+Q_1)}{c_0c_1(\log \log X)^{C_3}}.
\]
For $E_0(X)$ we can just apply Lemma \ref{maintermlemma1} to see
\[
E_0(X) \ll \left(\frac{\log \log Y}{c_0c_1}\right) \ll_{\epsilon} \left(\frac{\log \log X}{c_0c_1}\right). 
\]
Substituting these bounds into our expression shows that:
\begin{align*}
H_0(X) &\ll_{C_2,C_3}\frac{\tau(r_0)\tau(r_1)(Q_0+Q_1)X^2}{c_0c_1c_2c_3(\log X)(\log \log X)^{C_3}} + \frac{X^2(\log \log X)(\log \log 3r_2r_3)^{3/2}}{c_0c_1c_2c_3(\log X)^{2-\epsilon}}\\
&\ll_{C_2,C_3}\frac{\tau(r_0)\tau(r_1)(Q_0+Q_1)X^2}{c_0c_1c_2c_3(\log X)(\log \log X)^{C_3}}
\end{align*}
concluding the proof.
\end{proof}

Lemma \ref{non-symmetric-hyperbola-sums} is only effective when the conductors, $Q_i$, are bounded by a power of $\log\log X$. Therefore, we must turn to other methods to deal with the larger parts of such averages. This last ingredient is precisely Corollary \ref{AveragingovermediumconductorsIntro}. We are now ready to prove the main result of this section.

\begin{proposition}\label{Asymmetrictypeaverage1}
    Let $X\geq 3$, $C_1,C_2>0$ be such that $(C_1\log\log X)^{C_2}>2$. Fix some odd square-free integers $Q_1,Q_2,Q_3\in\N$ such that $Q_1\leq (\log\log X)^{C_2}$, and some $\b{q}\in(\Z/8\Z)^{*4}$, $\Tilde{\b{q}}\in(\Z/8\Z)^{*2}$. Suppose $\chi_2$ and $\chi_3$ are characters modulo $Q_2$ and $Q_3$ respectively. Fixing any odd integers $1\leq r_0,r_1,r_2,r_3\leq (\log X)^{C_1}$ such that $\mathrm{gcd}(Q_1,r_0r_1r_2r_3)=\mathrm{gcd}(Q_2Q_3,r_2r_3)=1$ and fixing any $1\leq c_0,c_1,c_2,c_3\leq (\log X)^{C_2/32}$, $1\leq d_0,d_1,d_2,d_3\leq (\log X)^{C_2/4}$ we define, for any $\b{m}\in\N^2$
    \[
    H'(X,\b{m}) = \mathop{\sum\sum\sum\sum}_{\substack{\b{n}\in\N^4, \|n_0d_0,n_1d_1\|,\|n_2d_2,n_3d_3\|>(\log X)^{C_2}\\ \|n_0c_0,n_1c_1\|\cdot\|n_2m_2c_2,n_3m_3c_3\|\leq X\\ \mathrm{gcd}(n_i,2r_i)=1\;\forall\;0\leq i\leq 3\\ \b{n}\equiv \b{q}\bmod{8}}} \frac{\psi_{m_2m_3}(n_2n_3)}{\tau(n_0)\tau(n_1)\tau(n_2)\tau(n_3)}.
    \]
    Then,
    \begin{align*}
    \mathop{\sum\sum}_{\substack{\b{m}\in\N^2, \|m_2,m_3\|\leq (\log X)^{C_2}\\ \mathrm{gcd}(m_i,2Q_1Q_2Q_3r_i)=1\;\forall 2\leq i\leq 3\\ \b{m}\equiv \Tilde{\b{q}}\bmod{8}\\ Q_1m_2m_3\neq 1}}\frac{\mu^2(m_2m_3)\chi_2(m_2)\chi_3(m_3)}{\tau(m_2)\tau(m_3)}H'(X,\b{m}) \ll_{C_2} \frac{\tau(r_0)\tau(r_1)X^2}{c_0c_1c_2c_3(\log X)(\log \log X)^{C_3}}.
    \end{align*}
    where $C_3 = C_2/2-1$ and where the implied constant depends at most on $C_1$ and $C_2$.
\end{proposition}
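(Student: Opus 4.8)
The plan is to split the average over $\b{m}$ according to the size of the conductor $m_2m_3$ and to treat the two ranges with the two tools highlighted before the statement: Lemma~\ref{non-symmetric-hyperbola-sums} when $m_2m_3$ is at most a fixed power of $\log\log X$, and Corollary~\ref{AveragingovermediumconductorsIntro} for larger $m_2m_3$. Fix the threshold $T=(C_2\log\log X)^{C_2}$ and write the left-hand side as $S_{\mathrm{sm}}+S_{\mathrm{lg}}$, where $S_{\mathrm{sm}}$ is the contribution of the $\b{m}$ with $m_2m_3\leq T$ and $S_{\mathrm{lg}}$ the rest; recall $\|m_2,m_3\|\leq(\log X)^{C_2}$, so $m_2m_3\leq(\log X)^{2C_2}$ throughout.

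For $S_{\mathrm{sm}}$ I would estimate each $H'(X,\b{m})$ individually, applying Lemma~\ref{non-symmetric-hyperbola-sums} with the roles of the pairs $(n_0,n_1)$ and $(n_2,n_3)$ interchanged (the height condition and \eqref{section3sumconds} being symmetric in these pairs) and with the auxiliary constants $m_2c_2$, $m_3c_3$ in place of $c_2,c_3$, which is legitimate since $m_ic_i\leq T(\log X)^{C_2/32}\leq(\log X)^{C_2/16}$ for $X$ large. The character $\psi_{m_2m_3}$ is non-principal, its conductor $m_2m_3$ being square-free and $>1$, so for every $C>0$ the lemma yields a bound of size $\tau(r_2)\tau(r_3)(m_2m_3)X^2\big(c_0c_1(m_2c_2)(m_3c_3)(\log X)(\log\log X)^{C}\big)^{-1}=\tau(r_2)\tau(r_3)X^2\big(c_0c_1c_2c_3(\log X)(\log\log X)^{C}\big)^{-1}$. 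Summing over the $O(T)$ admissible $\b{m}$ against the bounded weight $\mu^2|\chi_2\chi_3|/(\tau(m_2)\tau(m_3))$ and choosing $C>C_3+2C_2$ gives $S_{\mathrm{sm}}\ll X^2\big(c_0c_1c_2c_3(\log X)(\log\log X)^{C_3}\big)^{-1}$, which (using $\tau(r_0)\tau(r_1)\geq1$) is of the claimed order.

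For $S_{\mathrm{lg}}$ I would first run the hyperbola method (Lemma~\ref{hyperbolamethod}) on $H'(X,\b{m})$ with $Y=\exp((\log X)^{\epsilon})$. In the piece where $\|n_0c_0,n_1c_1\|\leq Y$ the inner sum over $n_2,n_3$ carries the non-principal character $\psi_{m_2m_3}$ of conductor $\ll(\log X)^{2C_2}$ over a range of length $X^{1-o(1)}$, so Lemma~\ref{Siegel--Walfisz1} makes it negligible with any power of $\log X$ to spare, even after summing over $\b{m}$; the rectangular and overlap pieces are cleared the same way. In the piece where $\|n_2m_2c_2,n_3m_3c_3\|\leq X/Y$ the pair $n_0,n_1$ carries no character, and summing over it via Lemma~\ref{Siegel--Walfisz1}, together with the unwrapping arguments of Lemmas~\ref{maintermlemma1}--\ref{maintermlemma3} for the maximum in $\|n_0c_0,n_1c_1\|$ and the condition $\|n_0d_0,n_1d_1\|>(\log X)^{C_2}$, produces the main term $\mathfrak{S}_0(r_0)\mathfrak{S}_0(r_1)X^2\big(\phi(8)^2c_0c_1\|n_2m_2c_2,n_3m_3c_3\|^2\log(X/\|n_2m_2c_2,n_3m_3c_3\|)\big)^{-1}$ up to errors of size $\tau(r_0)\tau(r_1)X^2(\log X)^{-A}$ for every $A$. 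Discarding the negligible tail $\|n_2m_2c_2,n_3m_3c_3\|>\sqrt X$ and replacing $\log(X/\|n_2m_2c_2,n_3m_3c_3\|)$ by $\log X$ at the cost of a convergent remainder, $S_{\mathrm{lg}}$ reduces, up to an admissible error $\tau(r_0)\tau(r_1)X^2\big(c_0c_1c_2c_3(\log X)(\log\log X)^{C_3}\big)^{-1}$, to showing that
\[
\Sigma:=\mathop{\sum\sum}_{\substack{\b{m}\,:\,m_2m_3>T\\ (\text{conditions of the statement})}}\frac{\mu^2(m_2m_3)\chi_2(m_2)\chi_3(m_3)}{\tau(m_2)\tau(m_3)}\;\mathop{\sum\sum}_{\substack{n_2,n_3\\ \|n_2m_2c_2,n_3m_3c_3\|\leq\sqrt X\\ \eqref{section3sumconds}}}\frac{\psi_{m_2m_3}(n_2n_3)}{\|n_2m_2c_2,n_3m_3c_3\|^2\,\tau(n_2)\tau(n_3)}
\]
satisfies $\Sigma\ll(c_2c_3)^{-1}(\log\log X)^{-C_3}$.

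The final step---recasting $\Sigma$ in the shape demanded by Corollary~\ref{AveragingovermediumconductorsIntro}---is where the real work lies. I would split the maximum $\|n_2m_2c_2,n_3m_3c_3\|$ into its two cases (say $n_3m_3c_3\leq n_2m_2c_2\leq\sqrt X$, so the weight becomes $(n_2m_2c_2)^{-2}$), perform a $\gcd(n_2,m_2)$-dissection (the extra sum converging through the weight) so that the variable $n_2m_2$ carries the multiplicative weight $(n_2m_2)^{-2}c_2^{-2}\tau(n_2m_2)^{-1}$ of the outer variable of the corollary with $c_2$ as its auxiliary constant, take $m_2m_3$ as the conductor, extract the harmonic weight $1/(m_2m_3)$ there by a partial summation against the height, and write $\psi_{m_2m_3}(n_2n_3)=\big(\tfrac{n_3}{m_2m_3}\big)\big(\tfrac{n_2}{m_2m_3}\big)$: the first factor is the $\big(\tfrac{n_1}{m}\big)$ of the corollary with $n_1=n_3$, and the second, together with the dissection multiplicities, the characters $\chi_2,\chi_3$ and the $\mu^2$, is absorbed into a complex coefficient of modulus $\leq1$ attached to the conductor---permitted because Corollary~\ref{AveragingovermediumconductorsIntro} allows arbitrary bounded coefficient sequences, and its proof, resting on the large sieve and the Friedlander--Iwaniec bound, is uniform in them even when they depend on the outer variable. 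The range $n_3m_3c_3\leq n_2m_2c_2$ is controlled by the cruder $n_3c_3\leq(n_2m_2)c_2$ plus a remainder dealt with separately, and splitting \eqref{section3sumconds} by whether $n_2d_2$ or $n_3d_3$ exceeds $(\log X)^{C_2}$ one gets $n_2m_2\geq n_2>(\log X)^{3C_2/4}$ in the first case---exactly the lower cutoff of Corollary~\ref{AveragingovermediumconductorsIntro} with parameters equal to $C_2$---while in the second case every variable is polylogarithmic and the contribution is dispatched by trivial bounds as in Lemma~\ref{non-symmetric-hyperbola-sums-mainterm1}. Corollary~\ref{AveragingovermediumconductorsIntro} then gives $\Sigma\ll(c_2c_3)^{-1}(\log\log X)^{-(C_2/2-1)}$, i.e.\ with $C_3=C_2/2-1$, as required. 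The main obstacle is precisely this reduction: untangling the maximum in the hyperbolic height and the auxiliary constants $c_i,d_i,m_i$, and handling the simultaneous occurrence of $n_2$ and $n_3$ inside $\psi_{m_2m_3}(n_2n_3)$, so that the sum conforms to the rigid template of Corollary~\ref{AveragingovermediumconductorsIntro}.
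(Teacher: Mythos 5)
Your overall architecture (split the $\b{m}$-average at a $(\log\log X)$-power threshold, treat small conductors by Lemma \ref{non-symmetric-hyperbola-sums} and large ones by the asymmetric hyperbola method plus Corollary \ref{AveragingovermediumconductorsIntro}) is the same as the paper's, but the execution of the large-conductor piece has a genuine gap, traceable to where you place the character. The paper's proof works with the character $\left(\frac{n_0n_1}{Q_1m_2m_3}\right)$, i.e.\ on the pair $(n_0,n_1)$, which is the pair confined to $\|n_0c_0,n_1c_1\|\leq Y=\exp((\log X)^{\epsilon})$ in the dominant hyperbola piece; the Siegel--Walfisz evaluation is then performed over the character-free pair $(n_2,n_3)$, so the quantity inside the logarithm is $\leq Y(\log X)^{O(1)}$, the replacement $1/\log(X/\cdot)=1/\log X+O((\log X)^{\epsilon-2})$ is essentially free, the $m$-dependence decouples into the harmonic factor $1/(m_2m_3)$ and the character, and the residual bilinear sum (variables in $[(\log X)^{3C_2/4},Y]$, conductors polylogarithmic) maps directly onto Corollary \ref{AveragingovermediumconductorsIntro}. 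In your arrangement the character sits on $(n_2,n_3)$, the pair whose height carries $m_2c_2,m_3c_3$, so after summing over $(n_0,n_1)$ the character-carrying variables still range up to $X/Y$. Your two simplifying moves there are quantitatively false at the required precision: the discarded tail $\|n_2m_2c_2,n_3m_3c_3\|>\sqrt X$, bounded with absolute values, is of size $\asymp X^2(\log\log X)/(c_0c_1c_2c_3m_2m_3\log X)$ for each $\b{m}$ (dyadic blocks $M$ contribute $\asymp X^2/(c_0c_1c_2c_3m_2m_3\log M\log(X/M))$ and there are $\asymp\log\log X$ relevant scales), and the error in replacing $\log(X/\|n_2m_2c_2,n_3m_3c_3\|)$ by $\log X$ is of comparable size by Lemma \ref{maintermlemma2}; after the $\b{m}$-average both exceed the target $X^2/(c_0c_1c_2c_3(\log X)(\log\log X)^{C_3})$ by powers of $\log\log X$, which is exactly the saving the proposition is about. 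So neither step can be done trivially; you would have to retain the oscillation on that range as well, which is not addressed.

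The remaining reduction of your $\Sigma$ to Corollary \ref{AveragingovermediumconductorsIntro} is also not carried out, and the obstructions you gesture at are real: the conductor $m_2m_3$ shares the factor $m_2$ with your outer variable $n_2m_2$ (so the gcd-dissection multiplicities are divisor-bounded, not of modulus $\leq 1$), and the inner constraint $n_3m_3c_3\leq n_2m_2c_2$ couples the inner variable $n_3$ to the conductor component $m_3$, giving a hyperbolic rather than rectangular $(m,n_1)$-region; enlarging it to $n_3c_3\leq n_2m_2c_2$ inside an oscillating sum is not an upper bound. In the paper none of this arises because the $m$'s never touch the surviving pair's height. Two smaller points: in the small-conductor piece your interchange of roles yields $\tau(r_2)\tau(r_3)$, not $\tau(r_0)\tau(r_1)$, and since $\tau(r_i)$ for $r_i\leq(\log X)^{C_1}$ can exceed any fixed power of $\log\log X$, it cannot simply be dropped against the stated bound; and your case analysis of \eqref{section3sumconds} (``every variable is polylogarithmic'' in the second case) is garbled --- if $n_3d_3>(\log X)^{C_2}$ then $n_3$ is large, and what one actually deduces (as in the paper) is the lower bound on the outer variable in both cases. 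In short, the skeleton is right, but the key analytic steps in the large-conductor range are either invalid as stated or left undone, so the proposal does not yet constitute a proof.
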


\begin{proof}
Denote $W=(\log X)^{C_2}$ and recall that we have defined $\psi_{m}(\cdot)$ as $\left(\frac{\cdot}{m}\right)$ or $\left(\frac{m}{\cdot}\right)$ generically. We will assume the former, and we remark that quadratic reciprocity may be used to swap between the two cases. Call the sum under consideration $S(X)$. Then we write
\[
S(X) = S_1(X) + S_2(X)
\]
where $S_1(X)$ and $S_2(X)$ are defined as $S(X)$ with the extra conditions $\|m_2,m_3\|\leq (\log W)^{C_2}$ and $(\log W)^{C_2}<\|m_2,m_3\|\leq W$ respectively. First we deal with $S_1(X)$. For each fixed $\b{m}$ such that $\mathrm{gcd}(m_i,2Q_1Q_2Q_3r_i)=1$, $\mu^2(m_2m_3)=1$ and $Q_1m_2m_3\neq 1$, the sum $H'(X,\b{m})$ is precisely of the form considered in Lemma \ref{non-symmetric-hyperbola-sums}. Furthermore, the range of $m_2$ and $m_3$ in $S_1(X)$ is small enough for this lemma to be effective. Thus,
\begin{align*}
S_1(X) &\ll_{C_2,C_3} \mathop{\sum\sum}_{\substack{\b{m}\in\N^2, \|m_2,m_3\|\leq (\log W)^{C_2}}} \frac{\mu^2(m_2m_3)\tau(r_0)\tau(r_1)Q_1m_2m_3X^2}{m_2m_3\tau(m_2)\tau(m_3)c_0c_1c_2c_3(\log X)(\log W)^{4C_2}}\\
&\ll_{C_2,C_3} \frac{\tau(r_0)\tau(r_1)X^2}{c_0c_1c_2c_3(\log X)(\log W)^{C_3}}.
\end{align*}
Next we deal with $S_2(X)$. We first use the hyperbola method on the $H'(X,\b{m})$ terms again with the parameter $Y=\exp((\log X)^{\epsilon})$ to obtain 
\[
H'(X,\b{m}) = H_0'(X,\b{m}) + H_1'(X,\b{m}) - H_2'(X,\b{m})
\]
where
\[
H_0'(X,\b{m}) = \mathop{\sum\sum\sum\sum}_{\substack{\|n_0c_0,n_1c_1\|\leq Y\\ \|n_2m_2c_2,n_3m_3c_3\|\leq X/\|n_0c_0,n_1c_1\| \\  \eqref{section3sumconds}}}\frac{1}{\tau(n_0)\tau(n_1)\tau(n_2)\tau(n_3)}\left(\frac{n_0n_1}{Q_1m_2m_3}\right),
\]
and $H_1'(X,\b{m}),H_2'(X,\b{m})$ are defined as $H_0'(X,\b{m})$ with the height conditions $\{\|n_0c_0,n_1c_1\|\leq Y,\; \|n_2m_2c_2,n_3m_3c_3\|\leq X/\|n_0c_0,n_1c_1\|\}$ replaced with
\[
\{\|n_2m_2c_2,n_3m_2c_3\|\leq X/Y,\; \|n_0c_0,n_1c_1\|\leq X/\|n_2m_2c_2,n_3m_2c_3\|\}
\]
and
\[
\{\|n_0c_0,n_1c_1\|\leq Y,\; \|n_2m_2c_2,n_3m_2c_3\|\leq X/Y\}
\]
respectively. $H_1'(X,\b{m})$ and $H_2'(X,\b{m})$ may be dealt with using Lemma \ref{Siegel--Walfisz1} since the ranges of the sums over $n_0$ and $n_1$ are guaranteed to be exponential in the size of $Q_1m_2m_3$. The conditions on $Q_1,m_2$ and $m_3$ guarantee that $\psi_{Q_1m_2m_3}$ is non-principal. Then Lemma \ref{Siegel--Walfisz1} will give arbitrary logarithmic saving in the sums over $n_0$ and $n_1$ so that, upon summing over $n_2$ and $n_3$ we obtain
\[
H_1'(X,\b{m}),H_2'(X,\b{m})\ll_{C_1,C_2} \frac{\tau(r_0)\tau(r_1)Q_1^2m_2^2m_3^2X^2}{m_2m_3c_0c_1c_2c_3(\log X)^{6C_2}} \ll_{C_1,C_2} \frac{Q_1^2m_2m_3X^2}{c_0c_1c_2c_3(\log X)^{5C_2}}.
\]
See the bounds for \eqref{non-principalsymmetrichyperbolicregionsum0} and \eqref{typicalrectangularregioncharactersum} in Lemma \ref{symmetric-hyperbola-sums} for analogous proofs. Summing these trivially over the $m_i$ will then give
\[
\mathop{\sum\sum}_{\substack{\b{m}\in\N^2\\ (\log W)^{C_3}<\|m_2,m_3\|\leq W\\ \mathrm{gcd}(m_i,2Q_1Q_2Q_3r_i)=1\;\forall 2\leq i\leq 3\\ \b{m}\equiv \Tilde{\b{q}}\bmod{8}}}\frac{\mu^2(m_2m_3)\chi_2(m_2)\chi_3(m_3)}{\tau(m_2)\tau(m_3)}H_1'(X,\b{m}) \ll_{C_2,C_3} \frac{\tau(r_0)\tau(r_1)X^2}{c_0c_1c_2c_3(\log X)^{C_2}}
\]
and likewise,
\begin{align*}
\mathop{\sum\sum}_{\substack{\b{m}\in\N^2\\ (\log W)^{C_3}<\|m_2,m_3\|\leq W \\ \mathrm{gcd}(m_i,2Q_1Q_2Q_3r_i)=1\;\forall 2\leq i\leq 3\\ \b{m}\equiv \Tilde{\b{q}}\bmod{8}}}\frac{\mu^2(m_2m_3)\chi_2(m_2)\chi_3(m_3)}{\tau(m_2)\tau(m_3)}H_2'(X,\b{m}) \ll_{C_2,C_3} \frac{\tau(r_0)\tau(r_1)X^2}{c_0c_1c_2c_3(\log X)^{C_2}}
\end{align*}
since $W=(\log X)^{C_2}$. We now turn to $H_0'(X,\b{m})$. We add in the terms for which $\|n_2d_2,n_3d_3\|\leq(\log X)^{C_2}$ at the cost of a small error term (see, for example \eqref{section3smallsum}):
\[
H_0'(X,\b{m}) = \mathop{\sum\sum\sum\sum}_{\substack{\|n_0c_0,n_1c_1\|\leq Y\\ \|n_2m_2c_2,n_3m_3c_3\|\leq X/\|n_0c_0,n_1c_1\| \\  \|n_0d_0,n_1d_1\|>W \\ \mathrm{gcd}(n_i,r_i)=1\;\forall 0\leq i\leq 3\\ n_i\equiv q_i\bmod{8}\;\forall 0\leq i\leq 3}}\frac{1}{\tau(n_0)\tau(n_1)\tau(n_2)\tau(n_3)}\left(\frac{n_0n_1}{Q_1m_2m_3}\right) + O(X(\log X)^{2C_2}).
\]
Next we apply Lemma \ref{Siegel--Walfisz1} for non-principal characters with $Q=1$ and $C=3/2$ on the sum over $n_2$ and $n_3$, allowing us to preserve the Jacobi symbol. Upon using the standard Taylor series method to the logarithmic factors (as in Lemma \ref{non-symmetric-hyperbola-sums} since $m_2m_3c_2c_3\|n_0c_0,n_1c_1\|\leq Y(\log X)^{3C_1}=o(X)$), we obtain
\[
H_0'(X,\b{m}) = H_{00}'(X,\b{m}) + O(H_{01}'(X,\b{m}))
\]
where
\[
H_{00}'(X,\b{m}) = \frac{\mathfrak{S}_0(r_2)\mathfrak{S}_0(r_3) X^2}{\phi(8)^2 m_2m_3c_2c_3(\log X)}\left(\mathop{\sum\sum}_{\substack{\|n_0c_0,n_1c_1\|\leq Y\\ \|n_0d_0,n_1d_1\|>W\\ \mathrm{gcd}(n_i,r_i)=1\;\forall 0\leq i\leq1 \\ n_i\equiv q_i\bmod{8}\;\forall 0\leq i\leq 1}}\frac{1}{\|n_0c_0,n_1c_1\|^2\tau(n_0)\tau(n_1)}\left(\frac{n_0n_1}{Q_1m_2m_3}\right)\right),
\]
and
\[
H_{01}'(X,\b{m}) = \frac{X^2}{m_2m_3c_2c_3(\log X)^{2-\epsilon}}\mathop{\sum\sum}_{\substack{\|n_0c_0,n_1c_1\|\leq Y\\ \|n_0d_0,n_1d_1\|>W}}\frac{1}{\|n_0c_0,n_1c_1\|^2\tau(n_0)\tau(n_1)}.
\]
Note that we have used $r_i\ll (\log X)^{C_1}$ to absorb $(\log\log 3r_2r_3)^{3/2}$ and $\tau(r_2)\tau(r_3)$ into $(\log X)^{\epsilon}$. Let us fist deal with the $H_{01}'(X,\b{m})$. By Lemma \ref{maintermlemma1} the sum is $O\left(\frac{\log \log X}{c_0c_1}\right)$, so that overall,
\[
H_{01}'(X,\b{m}) \ll \frac{X^2(\log\log X)}{c_0c_1c_2c_3m_2m_3(\log X)^{2-\epsilon}}.
\]
Summing over $\b{m}$ will give
\[
\ll \mathop{\sum\sum}_{\substack{\|m_2,m_3\|\leq W}} \lvert H_{01}'(X,\b{m})\rvert \ll_{C_2,C_3} \frac{X^2(\log\log X)^4}{c_0c_1c_2c_3(\log X)^{2-\epsilon}}.
\]
To deal with $H_{00}'(X,\b{m})$ we use the averaging over $\b{m}$. Specifically, we are left to bound
\[
S_{200}(X) = \mathop{\sum\sum}_{\substack{\b{m}\in\N^2\\ (\log W)^{C_2}<\|m_2,m_3\|\leq W \\ \mathrm{gcd}(m_i,2Q_1Q_2Q_3r_i)=1\;\forall 2\leq i\leq 3\\ \b{m}\equiv \Tilde{\b{q}}\bmod{8}}}\frac{\mu^2(m_2m_3)\chi_2(m_2)\chi_3(m_3)}{\tau(m_2)\tau(m_3)}H_{00}'(X,\b{m}).
\]
This is bounded by
\begin{align*}
    \frac{X^2}{c_2c_3(\log X)}\left\lvert\mathop{\sum\sum}_{\substack{\b{m}\in\N^2,\b{n}\in\N^{4}\\ (\log W)^{C_2}<\|m_2,m_3\|\leq W \\ \mathrm{gcd}(m_i,2Q_1Q_2Q_3r_i)=1\;\forall 2\leq i\leq 3\\ \b{m}\equiv \Tilde{\b{q}}\bmod{8}}}\mathop{\sum\sum}_{\substack{\|n_0c_0,n_1c_1\|\leq Y\\ \|n_0d_0,n_1d_1\|>W \\ \mathrm{gcd}(n_i,r_i)=1\;0\leq i\leq 1\\ n_i\equiv q_i\bmod{8}\;0\leq i\leq 1}}\hspace{-4pt}\frac{\mu^2(m_2m_3)\chi_2(m_2)\chi_3(m_3)\left(\hspace{-2pt}\frac{n_0n_1}{Q_1m_2m_3}\hspace{-2pt}\right)}{m_2m_3\|n_0c_0,n_1c_1\|^2\tau(m_2m_3)\tau(n_0)\tau(n_1)}\right\rvert
\end{align*}
To begin we will write $m=m_2m_3$ and define
\[
\bar{\tau}(m) = \mathop{\sum\sum}_{\substack{m_2m_3=m, \|m_2,m_3\|>(\log W)^{C_2}\\ \mathrm{gcd}(m_i,2Q_1Q_2Q_3r_i)=1\;\forall 2\leq i\leq 3\\ m_i\equiv \Tilde{q}_i\bmod{8}\;\forall 2\leq i\leq 3}}\chi_2(m_2)\chi_3(m_3).
\]
Then, by rewriting, we see that $S_{200}(X)$ is
\begin{align*}
    \ll \frac{X^2}{c_2c_3(\log X)}\left\lvert\sum_{\substack{(\log W)^{C_2}< m \leq W^2}}\mathop{\sum\sum}_{\substack{\|n_0c_0,n_1c_1\|\leq Y\\ \|n_0d_0,n_1d_1\|>W \\ \mathrm{gcd}(n_i,r_i)=1\;\forall 0\leq i\leq 1\\ n_i\equiv q_i\bmod{8}\;\forall 0\leq i\leq 1}}\frac{\mu^2(m)\bar{\tau}(m)}{m\|n_0c_0,n_1c_1\|^2\tau(m)\tau(n_0)\tau(n_1)}\left(\frac{n_0n_1}{Q_1m}\right)\right\rvert.
\end{align*}
Next we split the sum over $n_0$ and $n_1$ into a region where $n_1c_1\leq n_0c_0$ and a second region where $n_0c_0<n_1c_1$. The sum over each region will be of the same order, so that $S_{200}(X)$ becomes
\begin{align*}
    \ll \frac{X^2}{c_2c_3(\log X)}\left\lvert\sum_{\substack{W^{3/4}/c_0<n_0\leq Y/c_0\\ \mathrm{gcd}(n_0,r_0)=1\\n_0\equiv q_0\bmod{8}}}\frac{1}{n_0^2c_0^2\tau(n_0)}\sum_{\substack{(\log W)^{C_2}< m \leq W^2}}\sum_{\substack{n_1c_1\leq n_0c_0\\ \|n_0d_0,n_1d_1\|>W \\ \mathrm{gcd}(n_1,r_1)=1\\ n_1\equiv q_1\bmod{8}}}\frac{\mu^2(m)\bar{\tau}(m)}{m\tau(m)\tau(n_1)}\left(\frac{n_0n_1}{Q_1m}\right)\right\rvert.
\end{align*}
Here we note that, although $n_0c_0\geq n_1c_1$, we may still have $n_0d_0<n_1d_1$; however, this can only occur when
\[
\frac{c_1}{c_0}\leq \frac{n_0}{n_1} < \frac{d_0}{d_1}.
\]
If this were the case, then we use the fact that $\|n_0d_0,n_1d_1\|>W$ to assert
\[
\frac{Wc_1}{c_0} < \frac{n_1d_1c_1}{c_0} \leq n_0d_1,
\]
from which it follows that
\[
n_0c_0 \geq \frac{Wc_1}{d_1} \geq W^{3/4}
\]
giving the lower bound in the sum over $n_0$. The bound $\|n_0d_0,n_1d_1\|>W$ is maintained as a condition on $n_1$. Next we define
\[
a_m = \frac{\mu^2(m)\bar{\tau}(m)}{\tau(m)}\left(\frac{n_0}{m}\right),
\]
and
\[
b_{n_1} = \mathds{1}(\mathrm{gcd}(n_1,r_1)=1)\mathds{1}(n_1\equiv q_1\bmod{8})\mathds{1}(\|n_0d_0,n_1d_1\|>W)\left(\frac{n_1}{Q_1}\right).
\]
Then, using the triangle inequality,
\begin{align*}
    S_{200}(X) \ll \frac{X^2}{c_2c_3(\log X)}\sum_{\substack{W^{3/4}/c_0<n_0\leq Y/c_0}}\frac{1}{n_0^2c_0^2\tau(n_0)}\left\lvert\mathop{\sum\sum}_{\substack{(\log W)^{C_2}< m \leq W^2\\ n_1\leq n_0c_0/c_1}}\frac{a_m b_{n_1}}{m\tau(n_1)}\left(\frac{n_1}{m}\right)\right\rvert.
\end{align*}
Finally, by applying Corollary \ref{AveragingovermediumconductorsIntro}, we obtain
\[
S_{200}(X)\ll_{C_2,C_3} \frac{X^2}{c_0c_1c_2c_3(\log X)(\log \log X)^{C_2/2-1}},
\]
which is sufficient.
\end{proof}

\section{Character sums over hyperbolic regions II}\label{hyperbolicanalysis2}
In this section we deal sums where characters are arranged differently with respect to the hyperbolic height conditions. The type of sum considered is of the form
\begin{equation}\label{Generalsumtype2}
\mathop{\sum\sum\sum\sum}_{\substack{\|n_0n_1,n_2n_3\| \leq X}}\frac{\chi(n_0n_2)\psi(n_1n_3)}{\tau(n_0)\tau(n_1)\tau(n_2)\tau(n_3)}
\end{equation}
where $\chi$ and $\psi$ are some Dirichlet characters. Just as in \S \ref{hyperbolicanalysis1} we have three cases:
\begin{itemize}
    \item[(a)] Main Term: both $\chi$ and $\psi$ are principal;
    \item[(b)] Small Conductor -- Symmetric Hyperbola Method: both $\chi$ and $\psi$ are non-principal;
    \item[(c)] Small Conductor -- Non-Symmetric Hyperbola Method: only one of $\chi$ or $\psi$ are non-principal
\end{itemize}
Recall that the main term in this case may be seen to be of order $X^2$. As mentioned in the introduction, we will ignore such cases. We set up the preliminaries for this in the first subsection and handle the symmetric and non-symmetric cases using the results of \S\ref{technicalstuff}.

\subsection{Sums Over Fixed Conductors}
When dealing with the counting problem \ref{thecountingproblem}, we will have to bound contributions from the character sums where the only characters present are characters of modulus $8$. Such contributions require their own result. Note that the method used in this section may be seen to be a simpler version of the method used in \S\ref{Lfunctionmethodsection}, where we will need to average over the modulus of the characters.
\begin{lemma}\label{fixedconductorlemmaforvanishingmainterms}
    Let $X\geq 3$, $C_1,C_2>0$. Suppose $\chi_{0},\chi_{1},\chi_{2}$ and $\chi_{3}$ are Dirichlet characters modulo $8$ such that $\chi_i$ and $\chi_j$ are non-principal for some pair $(i,j)\in\{0,1\}\times\{2,3\}$. Then for any odd integers $1\leq r_0,r_1,r_2,r_3\leq (\log X)^{C_1}$ and any integers $1\leq c_{01},c_{23},M \leq (\log X)^{C_2}$ we have,
    \begin{equation*} \mathop{\sum\sum\sum\sum}_{\substack{\|n_0n_1c_{01},n_2n_3c_{23}\|\cdot M \leq X \\ \mathrm{gcd}(n_i,2r_i)=1\forall\;0\leq i\leq 3}}\frac{\chi_{0}(n_0)\chi_{2}(n_2)\chi_{1}(n_1)\chi_{3}(n_3)}{\tau(n_0)\tau(n_1)\tau(n_2)\tau(n_3)} \ll_{C_1,C_2} \frac{\tau(r_0)\tau(r_1)\tau(r_2)\tau(r_3)X^2}{c_{01}c_{23}M^2(\log X)}.
    \end{equation*}
    where the implied constant depends at most on $C_1$ and $C_2$.
\end{lemma}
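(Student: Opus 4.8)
The plan rests on the observation that the height condition $\|n_0n_1c_{01},n_2n_3c_{23}\|\cdot M\le X$ is a \emph{maximum} condition, hence equivalent to the two independent inequalities $n_0n_1c_{01}\le X/M$ and $n_2n_3c_{23}\le X/M$. Thus the four-fold sum factors as a product $S_{01}(X)\,S_{23}(X)$ of two essentially identical two-variable sums, where
\[
S_{01}(X)=\mathop{\sum\sum}_{\substack{n_0n_1\le X/(Mc_{01})\\ \gcd(n_0,2r_0)=\gcd(n_1,2r_1)=1}}\frac{\chi_0(n_0)\chi_1(n_1)}{\tau(n_0)\tau(n_1)},
\]
and $S_{23}(X)$ is defined analogously with the indices $0,1$ replaced by $2,3$. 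By hypothesis at least one of $\chi_0,\chi_1$ is non-principal and at least one of $\chi_2,\chi_3$ is non-principal, so, relabelling within each pair if necessary, it suffices to prove that whenever $\chi_0$ is non-principal one has
\[
S_{01}(X)\ll_{C_2}\frac{\tau(r_0)\tau(r_1)\,X}{Mc_{01}\sqrt{\log X}}.
\]
Multiplying this with the analogous bound for $S_{23}(X)$ and using $\tau(r_i)\ge1$ gives exactly the claimed estimate, on noting that $\log\bigl(X/(Mc_{01})\bigr)\ge\tfrac12\log X$ once $X$ is large in terms of $C_2$ (for the remaining bounded range of $X$ the asserted inequality is trivial, the left side being $O(1)$ and the right side bounded below).

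To estimate $S_{01}(X)$, put $T=X/(Mc_{01})$ and split by the elementary two-variable Dirichlet hyperbola identity at level $\sqrt T$: $S_{01}(X)$ becomes $\sum_{n_0\le\sqrt T}\frac{\chi_0(n_0)\mathbf{1}}{\tau(n_0)}\sum_{n_1\le T/n_0}\frac{\chi_1(n_1)\mathbf{1}}{\tau(n_1)}$ plus the symmetric term with $n_0$ and $n_1$ interchanged, minus the corner term $\bigl(\sum_{n_0\le\sqrt T}\frac{\chi_0(n_0)\mathbf{1}}{\tau(n_0)}\bigr)\bigl(\sum_{n_1\le\sqrt T}\frac{\chi_1(n_1)\mathbf{1}}{\tau(n_1)}\bigr)$, where $\mathbf{1}$ abbreviates the coprimality condition $\gcd(n_i,2r_i)=1$. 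Every inner sum here runs over a range $\ge\sqrt T$, which is large, so Lemma \ref{Siegel--Walfisz1} applies to each with $Q=8$. Since $\chi_0$ is non-principal its main term vanishes, giving $\sum_{n_0\le t}\frac{\chi_0(n_0)\mathbf{1}}{\tau(n_0)}\ll_C\tau(r_0)\,t\,(\log 2t)^{-C}$ for any fixed $C$; combining this with the bounds $\sum_{n_1\le\sqrt T}\tau(n_1)^{-1}\ll\sqrt T/\sqrt{\log T}$ and $\sum_{n_1\le\sqrt T}(n_1\tau(n_1))^{-1}\ll\sqrt{\log T}$ (both obtained by partial summation from Lemma \ref{Siegel--Walfisz1}) shows the symmetric term and the corner term are each $\ll\tau(r_0)\,T/\sqrt{\log T}$, as is the first term in the case that $\chi_1$ is also non-principal.

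The one delicate piece is the first term when $\chi_1$ is principal (which forces $\chi_0$ non-principal). There Lemma \ref{Siegel--Walfisz1} gives, for the inner sum over $n_1$,
\[
\sum_{n_1\le T/n_0}\frac{\chi_1(n_1)\mathbf{1}}{\tau(n_1)}=\mathfrak{S}_0(8r_1)\,\frac{T/n_0}{\sqrt{\log(T/n_0)}}\Bigl(1+O\bigl(\tfrac{(\log\log T)^{3/2}}{\log T}\bigr)\Bigr)+O_C\bigl(\tfrac{\tau(r_1)\,T/n_0}{(\log T)^{C}}\bigr);
\]
the $O_C$-term contributes $\ll\tau(r_1)\,T/\sqrt{\log T}$ after summation over $n_0$, and the secondary $O$-term contributes $\ll T(\log\log T)^{3/2}/\log T\ll T/\sqrt{\log T}$ on bounding $|\chi_0|\le1$. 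One is then left with $\mathfrak{S}_0(8r_1)\,T\sum_{n_0\le\sqrt T}\frac{\chi_0(n_0)\mathbf{1}}{\tau(n_0)}\,w(n_0)$, where $w(n_0)=\bigl(n_0\sqrt{\log(T/n_0)}\bigr)^{-1}$; since $\mathfrak{S}_0(8r_1)\ll1$ and $w$ is positive and decreasing on $[1,\sqrt T]$ with $w(n_0)\ll(n_0\sqrt{\log T})^{-1}$ and $|w'(t)|\ll(t^2\sqrt{\log T})^{-1}$, partial summation against the estimate $\sum_{n_0\le t}\frac{\chi_0(n_0)\mathbf{1}}{\tau(n_0)}\ll_C\tau(r_0)\,t\,(\log 2t)^{-C}$ with $C>1$ gives $\sum_{n_0\le\sqrt T}\frac{\chi_0(n_0)\mathbf{1}}{\tau(n_0)}w(n_0)\ll\tau(r_0)/\sqrt{\log T}$, so this piece too is $\ll\tau(r_0)\,T/\sqrt{\log T}$. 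Adding the pieces gives $S_{01}(X)\ll\tau(r_0)\tau(r_1)\,T/\sqrt{\log T}$, which is the desired bound since $\log T\gg_{C_2}\log X$. The main obstacle is exactly this last partial-summation step: it is the only place where the non-principality of $\chi_0$ is used to beat the trivial bound, and one has to check that the slowly varying weight $w$ does not spoil the cancellation. Everything else is routine bookkeeping with Lemma \ref{Siegel--Walfisz1}.
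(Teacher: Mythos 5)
Your proof is correct, and its skeleton matches the paper's: both factor the maximum condition into the product $H_{01}(X)H_{23}(X)$ of two hyperbolic two-variable sums, run the hyperbola method on each factor, and feed every long inner sum into Lemma \ref{Siegel--Walfisz1} with $Q=8$, with the only delicate piece being the term where the non-principal character sits outside and the principal one inside. Where you genuinely diverge is in how that piece is finished. The paper splits asymmetrically at the short parameter $Y=\exp((\log X)^{1/6})$, Taylor-expands the weight $(\log X/n_0c_{01}M)^{-1/2}$ (possible only because $n_0\le Y$ is tiny), and is then left with the unweighted series $\sum_{n_0\le Y}\chi_0(n_0)/(n_0\tau(n_0))$, which it controls by factoring the Dirichlet series as $P(s,r,\chi_0)R(s,\chi_0)L(s,\chi_0)^{1/2}$ and invoking convergence at $s=1$ (zero-free region/Siegel), giving the bound $\ll\tau(r_0)$. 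You instead split symmetrically at $\sqrt T$, keep the slowly varying weight $w(n_0)=(n_0\sqrt{\log(T/n_0)})^{-1}$, and kill the sum by partial summation against the non-principal Siegel--Walfisz bound $\sum_{n_0\le t}\chi_0(n_0)\mathbf 1/\tau(n_0)\ll_C\tau(r_0)t(\log 2t)^{-C}$ with $C>1$; your verification of $w(n_0)\ll(n_0\sqrt{\log T})^{-1}$ and $|w'(t)|\ll(t^2\sqrt{\log T})^{-1}$ on $[1,\sqrt T]$ is exactly what is needed, and the resulting $\ll\tau(r_0)/\sqrt{\log T}$ suffices. Your route is thus slightly more elementary (no $L$-function square-root decomposition or non-vanishing input beyond what Lemma \ref{Siegel--Walfisz1} already encodes), at the cost of the weighted partial summation; the paper's route buys machinery (the series $D(1,\chi)$, and later $\widetilde L_r(1,\chi)$) that it reuses in \S\ref{Lfunctionmethodsection}, where the conductor must be averaged and your pointwise argument would not be enough. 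Two minor bookkeeping points you handled correctly and should keep explicit in a write-up: the reduction to $X$ large in terms of $C_2$ so that $\log(X/Mc_{01})\gg\log X$, and the fact that the secondary error in Lemma \ref{Siegel--Walfisz1} only contributes $\ll T(\log\log T)^{3/2}/\log T$ after summing $\sum_{n_0\le\sqrt T}(n_0\tau(n_0))^{-1}\ll\sqrt{\log T}$.
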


\begin{proof}
    We write the sum under consideration as $H_{01}(X)H_{23}(X)$
    where,
    \[
    H_{01}(X) = \mathop{\sum\sum}_{\substack{n_0n_1\leq X/c_{01}M,\\ \mathrm{gcd}(n_i,2r_i)=1\forall\;i\in\{0,1\}}}\frac{\chi_{0}(n_0)\chi_{1}(n_1)}{\tau(n_0)\tau(n_1)}\;\;\text{and}\;\;
    H_{23}(X) = \mathop{\sum\sum}_{\substack{n_2n_3\leq X/c_{23}M \\ \mathrm{gcd}(n_i,2r_i)=1\forall\;i\in\{2,3\}}}\frac{\chi_{2}(n_2)\chi_{3}(n_3)}{\tau(n_2)\tau(n_3)}.
    \]
    These are symmetric and thus we will focus on $H_{01}(X)$ and note that any bound for this may also be obtained for $H_{23}(X)$. Further, we will assume without loss in generality that $\chi_{0}$ and $\chi_{2}$ are non-principal. Letting $Y=\exp((\log X)^{1/6})$ we use the classical hyperbola method to write
    \begin{align*}
    H_{01}(X) =& \sum_{\substack{n_0\leq Y\\ \mathrm{gcd}(n_0,2r_0)=1}}\hspace{-4pt}\frac{\chi_{0}(n_0)}{\tau(n_0)}\hspace{-4pt}\sum_{\substack{n_1\leq X/n_0c_{01}M\\ \mathrm{gcd}(n_1,2r_1)=1}}\hspace{-4pt}\frac{\chi_{1}(n_1)}{\tau(n_1)} + \hspace{-3pt}\sum_{\substack{n_1\leq X/c_{01}MY\\ \mathrm{gcd}(n_1,2r_1)=1}}\hspace{-4pt}\frac{\chi_{1}(n_1)}{\tau(n_1)}\hspace{-4pt}\sum_{\substack{n_0\leq X/n_1c_{01}M\\ \mathrm{gcd}(n_0,2r_0)=1}}\hspace{-4pt}\frac{\chi_{0}(n_0)}{\tau(n_0)}\\ &- \mathop{\sum\sum}_{\substack{n_0\leq Y,n_1\leq X/c_{01}MY\\ \mathrm{gcd}(n_i,2r_i)=1}}\frac{\chi_{0}(n_0)}{\tau(n_0)}\frac{\chi_{1}(n_1)}{\tau(n_1)}.
    \end{align*}
    For the second and third sums we use Lemma \ref{Siegel--Walfisz1} for the sum over $\chi_{0}(n_0)$ with $Q=8$ and $C=2025$ in this lemma, noting that for the second sum we use,
    \[
    \frac{1}{(\log X/n_1c_{01}M)^{2025}}\ll \frac{1}{(\log Y)^{2025}}\ll \frac{1}{(\log X)^{2025/6}}
    \]
    since $n_1\leq X/Yc_{01}M$. In each case, upon summing over $n_1$, we obtain
    \[
    \ll \frac{\tau(r_0)X}{c_{01}M(\log X)^{2025/6}}.
    \]
    We are left with
    \[
    H_{01}(X) = \sum_{\substack{n_0\leq Y\\ \mathrm{gcd}(n_0,2r_0)=1}}\hspace{-4pt}\frac{\chi_{0}(n_0)}{\tau(n_0)}\hspace{-4pt}\sum_{\substack{n_1\leq X/n_0c_{01}M\\ \mathrm{gcd}(n_1,2r_1)=1}}\hspace{-4pt}\frac{\chi_{1}(n_1)}{\tau(n_1)} + O\left(\frac{\tau(r_0)X}{c_{01}M(\log X)^{2025/6}}\right).
    \]
    This error term is sufficient since $\tau(r_0)\ll_{C_1} (\log X)^{1/6}$. Now, if $\chi_{1}$ is non-principal then this remaining sum may be handled in the same way as the second. We therefore assume that is the principal character modulo $8$. Then, given the height conditions, we may use Lemma \ref{Siegel--Walfisz1} for principal characters modulo $8$ on the inner sum over $n_1$ with $C=2025$ sufficiently large. This will give $H_{01}(X)$ equal to 
    \begin{align*}
    \frac{\mathfrak{S}_0(r_1)X}{c_{01}M}\sum_{\substack{n_0\leq Y\\ \mathrm{gcd}(n_0,2r_0)=1}}\hspace{-4pt}\frac{\chi_{0}(n_0)}{n_0\tau(n_0)(\log X/n_0c_{01}M)^{1/2}}+ O\left(\frac{X(\log\log 3r_1)^{3/2}}{c_{01}M(\log X)^{3/2}}\sum_{\substack{n_0\leq Y\\ \mathrm{gcd}(n_0,2r_0)=1}}\hspace{-4pt}\frac{1}{n_0\tau(n_0)}\right).
    \end{align*}
    Using the bound
    \[
    \sum_{\substack{n_0\leq Y\\ \mathrm{gcd}(n_0,2r_0)=1}}\hspace{-4pt}\frac{1}{n_0\tau(n_0)} \ll (\log Y)^{1/2}\ll (\log X)^{1/2}
    \]
    and the typical Taylor series expansion
    \[
    \frac{1}{(\log X/n_0c_{01}M)^{1/2}} = \frac{1}{(\log X)^{1/2}}+O\left(\frac{1}{(\log X)^{4/3}}\right)
    \]
    (the latter a result of $n_0c_{01}M\leq Y(\log X)^{2C_2}$), this becomes
    \[
    H_{01}(X) \ll \frac{X}{c_{01}M(\log X)^{1/2}}\left\lvert\sum_{\substack{n_0\leq Y\\ \mathrm{gcd}(n_0,2r_0)=1}}\hspace{-4pt}\frac{\chi_{0}(n_0)}{n_0\tau(n_0)}\right\rvert + O\left(\frac{X(\log\log 3r_1)^{3/2}}{c_{01}M(\log X)^{5/4}}\right).
    \]
    Now let us consider the remaining sum over $n_0$. To do this first consider the Dirichlet series
    \[
    D(s,\chi_{0}) = \sum_{\substack{n=1\\ \mathrm{gcd}(n,2r)=1}}^{\infty}\hspace{-4pt}\frac{\chi_{0}(n)}{n^s\tau(n)}.
    \]
    Using the Euler product we may write
    \[
    D(s,\chi_{0}) = P(s,r,\chi_{0})R(s,\chi_{0})L(s,\chi_{0})^{1/2}
    \]
    where $P(s,r,\chi_0)$ and $R(s,\chi_0)$ are 
    \[ \prod_{\substack{p\;\text{prime}\\p|r}}\left(1+\sum_{j=1}^{\infty}\frac{\chi_{0}(p)^j}{(j+1)p^{js}}\right)^{-1}\;\;\textrm{and}\;\;\prod_{p\;\text{prime}}\left(1+\sum_{j=1}^{\infty}\frac{\chi_{0}(p)^j}{(j+1)p^{js}}\right)\left(1-\frac{\chi_{0}(p)}{p^s}\right)^{1/2}
    \]
    respectively, the second product converging absolutely when $\Re(s)>1/2$, and
    \[
    L(s,\chi_{0}) = \prod_{p\;\text{prime}}\left(1-\frac{\chi_{0}(p)}{p^s}\right)^{-1}
    \]
    is the $L$-function for the character $\chi_{0}$. It follows from this decomposition that the Dirichlet series converges whenever $\Re(s)>1/2$ and $L(s,\chi_{0})\neq 0$. Using the zero free region for $L$-functions of primitive characters and Siegel's Theorem it follows, in particular, that $D(1,\chi_{0})$ converges and that
    \[
    D(1,r,\chi_{0}) = P(1,r,\chi_{0})R(1,\chi_{0})L(1,\chi_{0})^{1/2} \ll \tau(r)
    \]
    Therefore,
    \[
    \sum_{\substack{n_0\leq Y\\ \mathrm{gcd}(n_0,2r_0)=1}}\hspace{-4pt}\frac{\chi_{0}(n_0)}{n_0\tau(n_0)} \ll \tau(r_0)
    \]
    from which it follows that
    \[
    H_{01}(X) \ll \frac{\tau(r_0)\tau(r_1)X}{c_{01}M(\log X)^{1/2}}.
    \]
    Similarly,
    \[
    H_{23}(X) \ll \frac{\tau(r_2)\tau(r_3)X}{c_{23}M(\log X)^{1/2}}.
    \]
\end{proof}

\subsection{Small Conductor -- Symmetric Hyperbola Method}
As with case $(b)$ of \S \ref{hyperbolicanalysis1}, we only need to apply Lemmas \ref{Siegel--Walfisz1} and \ref{hyperbolamethod} appropriately.

\begin{lemma}\label{other-symmetric-type-sums}
    Let $X\geq 3$, $C_1,C_2>0$, $Q_{02},Q_{13}$ be odd integers and $\b{q}\in(\Z/8\Z)^{*4}$. Suppose $\chi_{02}$, $\chi_{13}$ are non-principal Dirichlet characters modulo $Q_{02}$, $Q_{13}$ respectively. Then for any odd integers $1\leq r_0,r_1,r_2,r_3\leq (\log X)^{C_1}$ such that $\mathrm{gcd}(Q_{ij},2r_ir_j)=1$ whenever $(i,j)\in\{(0,2),(1,3)\}$ and any integers $1\leq c_{01},c_{23},M \leq (\log X)^{C_2}$ we have,
    \begin{equation*} \mathop{\sum\sum\sum\sum}_{\substack{\|n_0n_1c_{01},n_2n_3c_{23}\|\cdot M \leq X \\ \mathrm{gcd}(n_i,2r_i)=1\forall\;0\leq i\leq 3\\ n_i\equiv q_i\bmod{8}\forall\;0\leq i\leq 3}}\frac{\chi_{02}(n_0n_2)\chi_{13}(n_1n_3)}{\tau(n_0)\tau(n_1)\tau(n_2)\tau(n_3)} \ll_{C_2,C_3} \frac{Q_{02}Q_{13}X^2}{c_{01}c_{23}M^2(\log X)^{C_3}}.
    \end{equation*}
    for any $C_3>0$, where the implied constant depends at most on the $C_i$.
\end{lemma}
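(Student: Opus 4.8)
The plan is to exploit that here the height is a \emph{single} maximum times $M$, so that the four-variable sum factors as a product of two independent two-variable sums, each of which can then be estimated by the classical hyperbola method combined with Lemma \ref{Siegel--Walfisz1}. First, note that $\|n_0n_1c_{01},n_2n_3c_{23}\|\cdot M\le X$ is equivalent to the two conditions $n_0n_1c_{01}\le X/M$ and $n_2n_3c_{23}\le X/M$ holding simultaneously. Since $\chi_{02}$ and $\chi_{13}$ are completely multiplicative, so that $\chi_{02}(n_0n_2)=\chi_{02}(n_0)\chi_{02}(n_2)$ and $\chi_{13}(n_1n_3)=\chi_{13}(n_1)\chi_{13}(n_3)$, and since the constraints $\mathrm{gcd}(n_i,2r_i)=1$ and $n_i\equiv q_i\bmod 8$ are imposed on each $n_i$ separately, the sum in the lemma equals $H^{(1)}(X)\,H^{(2)}(X)$, where
\[
H^{(1)}(X)=\mathop{\sum\sum}_{\substack{n_0n_1c_{01}\le X/M\\ \mathrm{gcd}(n_i,2r_i)=1,\ n_i\equiv q_i\bmod 8\ (i=0,1)}}\frac{\chi_{02}(n_0)\chi_{13}(n_1)}{\tau(n_0)\tau(n_1)}
\]
and $H^{(2)}(X)$ is the analogous sum over $(n_2,n_3)$ with $c_{01}$ replaced by $c_{23}$; this is the same device used at the start of the proof of Lemma \ref{fixedconductorlemmaforvanishingmainterms}.

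I would then bound $H^{(1)}(X)$, the estimate for $H^{(2)}(X)$ being identical. Write $T=X/(Mc_{01})$; since $M,c_{01}\le(\log X)^{C_2}$ one has $\log T\gg_{C_2}\log X$. Apply the classical Dirichlet hyperbola method at threshold $Y=T^{1/2}$, splitting $H^{(1)}(X)$ into the piece with $n_0\le Y$ (summing $n_1$ inside), the piece with $n_1\le Y$ (summing $n_0$ inside), and a subtracted separable piece with $n_0,n_1\le Y$; throughout, the $\mathrm{gcd}$ and congruence restrictions are retained. In each of the three pieces the inner sum runs over a range of length $\ge Y=T^{1/2}$, hence of logarithm $\gg_{C_2}\log X$, and the character on the inner variable --- $\chi_{13}$ in the first piece, $\chi_{02}$ in the other two --- is non-principal; it remains non-principal after intersection with $\{n\equiv q_i\bmod 8\}\cap\{\mathrm{gcd}(n,2r_i)=1\}$, since $Q_{02}$ and $Q_{13}$ are odd and coprime to $2r_ir_j$. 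Hence Lemma \ref{Siegel--Walfisz1} (in the form that builds in both $\mathrm{gcd}(n,r)=1$ and $n\equiv q\bmod 8$) applies to each inner sum with any exponent $C>0$: the main term vanishes and the error is $\ll_{C}\tau(r_\bullet)(Q_{02}+Q_{13})\,(\text{inner length})\,(\log X)^{-C}$. Summing the outer variable against $1/(n\tau(n))$ costs only $(\log X)^{O(1)}$ and $\tau(r_\bullet)\ll_{C_1,\varepsilon}(\log X)^{\varepsilon}$, so for every $C'>0$ one gets $H^{(1)}(X)\ll_{C_1,C_2,C'}(Q_{02}+Q_{13})X\,(Mc_{01})^{-1}(\log X)^{-C'}$, and likewise for $H^{(2)}(X)$ with $c_{23}$ in place of $c_{01}$.

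Multiplying the two bounds yields an estimate of the shape $(Q_{02}+Q_{13})^2X^2\,(c_{01}c_{23}M^2)^{-1}(\log X)^{-2C'}$; since in the ranges of interest $Q_{02}$ and $Q_{13}$ are themselves bounded by a fixed power of $\log X$ (this being a ``small conductor'' estimate), the factor $(Q_{02}+Q_{13})^2$ is absorbed into the arbitrary power of $\log X$ upon enlarging $C'$, and the bound takes the stated form $\ll Q_{02}Q_{13}X^2(c_{01}c_{23}M^2)^{-1}(\log X)^{-C_3}$ for any prescribed $C_3$. There is no deep obstacle: the only point needing care is the bookkeeping in the previous paragraph --- verifying that in every piece of the hyperbola decomposition the inner range is genuinely long enough for Lemma \ref{Siegel--Walfisz1} to deliver an arbitrary power-of-$\log$ saving, which is precisely where $M,c_{01},c_{23}\le(\log X)^{C_2}$ and the choice of $Y$ enter --- together with keeping track of the (harmless) conductor factors.
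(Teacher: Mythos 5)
Your overall strategy is the one the paper uses: the height condition splits the four-fold sum into a product of two independent two-variable sums $H^{(1)}(X)H^{(2)}(X)$, and each two-variable sum is attacked by the Dirichlet hyperbola method together with Lemma \ref{Siegel--Walfisz1}. However, there is a genuine gap in how you handle the conductor dependence at the very end. Applying the Siegel--Walfisz saving to \emph{both} factors gives, as you note, a bound of the shape $(Q_{02}+Q_{13})^2X^2(c_{01}c_{23}M^2)^{-1}(\log X)^{-2C'}$ (and the subtracted box in each factor in fact contributes a $Q_{02}Q_{13}$, so the conductor factor per factor is really as large as $Q_{02}Q_{13}$). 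You then absorb the excess conductor power into the arbitrary power of $\log X$ by asserting that $Q_{02},Q_{13}$ are bounded by a fixed power of $\log X$. But the lemma makes no such hypothesis: the bound is claimed for arbitrary odd conductors, linearly in $Q_{02}Q_{13}$, and this uniformity matters downstream --- in Proposition \ref{symmetrictypeaverage2} the lemma is invoked with conductors $Q_{02}m_0m_2$ and $Q_{13}m_1m_3$, where only the $m_i$ are restricted to be at most $(\log X)^{C_3}$ while $Q_{02},Q_{13}$ are unrestricted, so the bound must carry exactly one factor of each conductor. With, say, $Q_{13}$ bounded and $Q_{02}$ enormous, your $(Q_{02}+Q_{13})^2$ (or $(Q_{02}Q_{13})^2$) cannot be reduced to $Q_{02}Q_{13}$ by enlarging $C'$.

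The fix is exactly the small asymmetry in the paper's argument: apply the hyperbola method plus Lemma \ref{Siegel--Walfisz1} to only one of the two factors, say $H^{(1)}(X)$, obtaining $H^{(1)}(X)\ll_{C_2,C_3} Q_{02}Q_{13}X\,(c_{01}M)^{-1}(\log X)^{-C_3-1}$ (both conductors appear once here, since both characters occur in this factor), and bound the other factor trivially by $H^{(2)}(X)\ll X(\log X)(c_{23}M)^{-1}$. Multiplying gives the stated bound with the conductor factor $Q_{02}Q_{13}$ to the first power and an arbitrary power of $\log X$ saved, uniformly in $Q_{02},Q_{13}$. Everything else in your write-up (the factorisation of the height condition, the square-root threshold, the verification that the inner ranges are long enough and that the characters stay non-principal after imposing the congruence modulo $8$ and the coprimality conditions) is fine.
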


\begin{proof}
    Write the sum under consideration as $H_{01}(X)H_{23}(X)$
    where,
    \[
    H_{01}(X) = \mathop{\sum\sum}_{\substack{n_0n_1\leq X/c_{01}M,\\ \mathrm{gcd}(n_i,2r_i)=1\forall\;i\in\{0,1\}\\ n_i\equiv q_i\bmod{8}\forall\;i\in\{0,1\}}}\frac{\chi_{02}(n_0)\chi_{13}(n_1)}{\tau(n_0)\tau(n_1)}\;\;\textrm{and}\;\;
    H_{23}(X) = \mathop{\sum\sum}_{\substack{n_2n_3\leq X/c_{23}M \\ \mathrm{gcd}(n_i,2r_i)=1\forall\;i\in\{2,3\}\\ n_i\equiv q_i\bmod{8}\forall\;i\in\{2,3\}}}\frac{\chi_{02}(n_2)\chi_{13}(n_3)}{\tau(n_2)\tau(n_3)}.
    \]
    These sums are symmetric and so we focus on $H_{01}(X)$. The hyperbola method gives
    \begin{align*}
    H_{01}(X) =& \sum_{\substack{n_0\leq X^{1/2}/c_{01}^{1/2}M^{1/2}\\ \mathrm{gcd}(n_0,2r_0)=1\\ n_0\equiv q_0\bmod{8}}}\hspace{-4pt}\frac{\chi_{02}(n_0)}{\tau(n_0)}\hspace{-4pt}\sum_{\substack{n_1\leq X/n_0c_{01}M\\ \mathrm{gcd}(n_1,2r_1)=1\\ n_1\equiv q_1\bmod{8}}}\hspace{-4pt}\frac{\chi_{13}(n_1)}{\tau(n_1)} + \hspace{-3pt}\sum_{\substack{n_1\leq X^{1/2}/c_{01}^{1/2}M^{1/2}\\ \mathrm{gcd}(n_1,2r_1)=1\\ n_1\equiv q_1\bmod{8}}}\hspace{-4pt}\frac{\chi_{13}(n_1)}{\tau(n_1)}\hspace{-4pt}\sum_{\substack{n_0\leq X/n_1c_{01}M\\ \mathrm{gcd}(n_0,2r_0)=1\\ n_0\equiv q_0\bmod{8}}}\hspace{-4pt}\frac{\chi_{02}(n_0)}{\tau(n_0)}\\ &- \mathop{\sum\sum}_{\substack{n_0,n_1\leq X^{1/2}/c_{01}^{1/2}M^{1/2}\\ \mathrm{gcd}(n_i,2r_i)=1\\ n_i\equiv q_i\bmod{8}}}\frac{\chi_{02}(n_0)}{\tau(n_0)}\frac{\chi_{13}(n_1)}{\tau(n_1)}.
    \end{align*}
    The last of these sums may be written as the product of the sum over $n_0$ and the sum over $n_1$. Since both $\chi_{02}$ and $\chi_{13}$ are non-principal, we use Lemma \ref{Siegel--Walfisz1} on each part of this product and multiply the results to show that the contribution from this sum is
    \[
    \ll_{C_1,C_2} \frac{\tau(r_0)\tau(r_1)Q_{02}Q_{13}X}{c_{01}M(\log X/c_{01}M)^{2C_3+1}} \ll_{C_2,C_3} \frac{Q_{02}Q_{13}X}{c_{01}M(\log X)^{C_3+1}}.
    \]
    This last bound is obtained using the assumption $c_{01},M\leq (\log X)^{C_2}$. The first two sums in the expression for $H_{01}$ are dealt with in the same way since both characters are non-principal. Looking at the first sum, we use Lemma \ref{Siegel--Walfisz1} for the sum over $n_1$. This leads to
    \[
    \ll_{C_3} \sum_{\substack{n_0\leq X^{1/2}/c_{01}^{1/2}M^{1/2}}} \frac{\tau(r_1)Q_{13}X}{n_0c_{01}M(\log X/n_0c_{01}M)^{2C_3+2}}.
    \]
    Now, since $n_0\leq X^{1/2}/c_{01}^{1/2}M^{1/2}$ and $c_{01},M\leq (\log X)^{C_2}$ it follows that the first sum is then bounded by
    \begin{align*}
    \ll_{C_2,C_3} \sum_{\substack{n_0\leq X^{1/2}}} \frac{\tau(r_1)Q_{13}X}{n_0c_{01}M(\log X)^{2C_3+2}} \ll_{C_2,C_3} \frac{Q_{13}X}{c_{01}M(\log X)^{C_3+1}}
    \end{align*}
    Thus
    \[
    H_{01}(X) \ll_{C_2,C_3} \frac{Q_{02}Q_{13}X}{c_{01}M(\log X)^{C_3+1}}.
    \]
    Putting this together with the trivial bound $\frac{X(\log X)}{c_{23}M}$ for $H_{23}(X)$ gives the result.
\end{proof}

We conclude this subsection with a final averaging result. Its proof is a direct application of Lemma \ref{other-symmetric-type-sums}.

\begin{proposition}\label{symmetrictypeaverage2}
    Let $X\geq 3$, $C_1,C_2,C_3>0$, let $Q_{02},Q_{13}$ be odd integers and take $\b{q}\in(\Z/8\Z)^{*4}$, $\Tilde{\b{q}}\in(\Z/8\Z)^{*2}$. Let $1\leq r_0,r_1,r_2,r_3\leq (\log X)^{C_1}$ be odd integers such that $\mathrm{gcd}(Q_{ij},2r_ir_j)=1$ for $(i,j)\in\{(0,2),(1,3)\}$ and any $1\leq c_0,c_1,c_2,c_3\leq (\log X)^{C_2}$. Define, for any $\b{m}\in\N^4$,
    \[
    H''(X,\b{m}) = \mathop{\sum\sum\sum\sum}_{\substack{\b{n}\in\N^{4}\\ \|n_0n_1c_0,n_2n_3c_1\|\cdot\|m_0m_1c_2,m_2m_3c_3\|\leq X\\ \mathrm{gcd}(n_i,r_i)=1\;\forall 0\leq i\leq 3\\ n_i\equiv q_i\bmod{8}\;\forall 0\leq i\leq 3}}\frac{\psi_{Q_{02}m_0m_2}(n_0n_2)\psi_{Q_{13}m_1m_2}(n_1n_3)}{\tau(n_0)\tau(n_1)\tau(n_2)\tau(n_3)}.
    \]
    Then
    \begin{align*} \mathop{\sum\sum\sum\sum}_{\substack{\b{m}\in\N^{4},\|m_0,m_1,m_2,m_3\|\leq (\log X)^{C_3}\\ \mathrm{gcd}(m_0m_2,2Q_{02}r_0r_2)=\mathrm{gcd}(m_1m_3,Q_{13}r_1r_3)=1\\ Q_{02}m_0m_2\neq 1\;\text{and}\;Q_{13}m_1m_3\neq 1\\ \b{m}\equiv \Tilde{\b{q}}\bmod{8}}}& \hspace{-10pt}\frac{\mu^2(2m_0m_1m_2m_3)\lvert H''(X,\b{m})\rvert}{\tau(m_0)\tau(m_1)\tau(m_2)\tau(m_3)}\ll_{C_1,C_2,C_3,C_4}\frac{Q_{02}Q_{13}X^2}{c_0c_1c_2c_3(\log X)^{C_4}}.
    \end{align*}
    for any $C_4>0$ where the implied constant depends at most on the $C_i$.
\end{proposition}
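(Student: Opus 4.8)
The plan is to follow the proof of Proposition~\ref{symmetrictypeaverage1} essentially verbatim: for each fixed $\b{m}$ occurring in the outer average, recognise the inner sum $H''(X,\b{m})$ as an instance of Lemma~\ref{other-symmetric-type-sums}, extract a bound that is uniform in $\b{m}$, and then sum trivially over the polylogarithmically many admissible vectors $\b{m}$, absorbing the resulting loss into the free logarithmic exponent supplied by the lemma.

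First I would fix $\b{m}$ with $\mu^2(2m_0m_1m_2m_3)=1$, with $\mathrm{gcd}(m_0m_2,2Q_{02}r_0r_2)=\mathrm{gcd}(m_1m_3,2Q_{13}r_1r_3)=1$, and with $Q_{02}m_0m_2\neq 1$ and $Q_{13}m_1m_3\neq 1$. Combined with $\mathrm{gcd}(Q_{02},2r_0r_2)=\mathrm{gcd}(Q_{13},2r_1r_3)=1$, these conditions force $\psi_{Q_{02}m_0m_2}$ and $\psi_{Q_{13}m_1m_3}$ to be non-principal characters of odd moduli coprime to $2r_0r_2$ and $2r_1r_3$ respectively. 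Hence $H''(X,\b{m})$ is precisely of the form handled by Lemma~\ref{other-symmetric-type-sums}, with the substitutions $c_{01}=c_0$, $c_{23}=c_1$, $M=\|m_0m_1c_2,m_2m_3c_3\|$, and with conductors $Q_{02}m_0m_2$, $Q_{13}m_1m_3$ in place of $Q_{02}$, $Q_{13}$. Since $m_i\leq(\log X)^{C_3}$ and $c_2,c_3\leq(\log X)^{C_2}$, we have $M\leq(\log X)^{2C_3+C_2}$, so the size hypotheses of that lemma hold with a constant depending only on $C_2,C_3$. Applying it with a free exponent $A>0$ to be chosen gives
\[
H''(X,\b{m})\ll_{C_1,C_2,C_3,A}\frac{Q_{02}\,m_0m_2\cdot Q_{13}\,m_1m_3\cdot X^2}{c_0c_1\,\|m_0m_1c_2,m_2m_3c_3\|^{2}\,(\log X)^{A}}.
\]

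The key elementary point is that $\|m_0m_1c_2,m_2m_3c_3\|^{2}\geq(m_0m_1c_2)(m_2m_3c_3)=m_0m_1m_2m_3\,c_2c_3$, which cancels the $m_i$ in the numerator and produces the required $c_2c_3$ in the denominator; since also $\tau(m_0)\tau(m_1)\tau(m_2)\tau(m_3)\geq1$, this yields
\[
\frac{|H''(X,\b{m})|}{\tau(m_0)\tau(m_1)\tau(m_2)\tau(m_3)}\ll_{C_1,C_2,C_3,A}\frac{Q_{02}Q_{13}X^2}{c_0c_1c_2c_3(\log X)^{A}}
\]
uniformly over all $\b{m}$ in the average. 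Summing over the $\ll(\log X)^{4C_3}$ vectors $\b{m}$ with $\|m_0,m_1,m_2,m_3\|\leq(\log X)^{C_3}$ gives a total bound $\ll_{C_1,C_2,C_3,A}Q_{02}Q_{13}X^2(\log X)^{4C_3-A}/(c_0c_1c_2c_3)$, and choosing $A=C_4+4C_3$ produces the claimed estimate with implied constant depending only on $C_1,C_2,C_3,C_4$. I do not anticipate a genuine obstacle here: the only things needing care are verifying non-principality of the twisted Jacobi symbols from the stated $\mathrm{gcd}$, $\mu^2$ and "$\neq 1$" conditions, and checking that $M=\|m_0m_1c_2,m_2m_3c_3\|$ simultaneously meets the size constraint of Lemma~\ref{other-symmetric-type-sums} and absorbs the $m_i$- and $c_2,c_3$-dependence — exactly the bookkeeping already carried out in Proposition~\ref{symmetrictypeaverage1}.
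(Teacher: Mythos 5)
Your proposal is correct and is essentially the paper's argument: the paper proves this proposition with the single remark that it is a direct application of Lemma \ref{other-symmetric-type-sums}, and your write-up supplies exactly the intended bookkeeping (conductors $Q_{02}m_0m_2$, $Q_{13}m_1m_3$, the choice $M=\|m_0m_1c_2,m_2m_3c_3\|$ with $\|m_0m_1c_2,m_2m_3c_3\|^2\geq m_0m_1m_2m_3c_2c_3$, and a trivial sum over the $\ll(\log X)^{4C_3}$ admissible $\b{m}$ absorbed by the free logarithmic exponent), mirroring Proposition \ref{symmetrictypeaverage1}.
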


\subsection{Small Conductor -- Non-Symmetric Hyperbola Method}\label{Lfunctionmethodsection}
As in the analogous part of \S \ref{hyperbolicanalysis1} the asymmetry of these sums leads to difficulty. In the previous case the lower bounds on some of the variables and averaging over the characters with the neutraliser large sieve led to saving over the desired bound. In this case we will likewise have to exploit the averaging over the conductor to obtain a valid bound, but our methods will differ as the convex factors $\frac{1}{n_0n_1}$ and $\frac{1}{\|n_0,n_1\|^2}$ switch roles from \S \ref{hyperbolicanalysis1}. The argument begins in a similar fashion to that of Lemma \ref{fixedconductorlemmaforvanishingmainterms}, but deviates in order to handle the need to average over our conductors. Assume that the character $\chi_{02}$ is non-principal with conductor $Q_{02}$ and consider
\begin{equation}\label{Generalnonsymmetricsumtype2}
A(X) = \mathop{\sum\sum\sum\sum}_{\substack{\|n_0n_1c_{01},n_2n_3c_{23}\|\cdot M \leq X \\ \mathrm{gcd}(n_i,r_i)=1\forall\;0\leq i\leq 3\\ n_i\equiv q_i\bmod{8}\forall\;0\leq i\leq 3}}\frac{\chi_{02}(n_0n_2)}{\tau(n_0)\tau(n_1)\tau(n_2)\tau(n_3)}.
\end{equation}
Define also the Dirichlet series
\[
\widetilde{L}_r(1,\chi) = \sum_{\substack{n=1 \\ \mathrm{gcd}(n,r)=1 \\ n\equiv q\bmod{8}}}^{\infty}\frac{\chi(n)}{\tau(n)}
\]
for any odd integer $r$, any $q\in(\Z/8\Z)^{*}$ and any non-principal Dirichlet character $\chi$. Our first step is to prove the following:

\begin{lemma}\label{Lfunctioncheckpoint}
    Let $X\geq 3$, $C_1,C_2,C_3,C_4,C_5>0$ and fix some $\b{q}\in(\Z/8\Z)^{*4}$. Let $2<Q_{02}\leq(\log X)^{C_1}$ and $1\leq r_0,r_1,r_2,r_3\leq(\log X)^{C_2}$ be odd integers such that $\mathrm{gcd}(Q_{02},2r_0r_2)=1$. Suppose $\chi_{02}$ is a non-principal character modulo $Q_{02}$. Then for any integers $1\leq c_{01},c_{23} \leq (\log X)^{C_3}$, $1\leq M\leq (\log X)^{C_4}$ we have
    \begin{align*}
    A(X) =& \frac{\mathfrak{S}_0(2r_1)\mathfrak{S}_0(2r_3)X^2}{16c_{01}c_{23}M^2\log X}\left(\mathop{\sum\sum}_{\substack{\chi,\chi'\bmod{8}}} \overline{\chi}(q_0)\overline{\chi'}(q_2) \widetilde{L}_{r_0}(1,\chi_{02}\chi)\widetilde{L}_{r_2}(1,\chi_{02}\chi')\right)\\ &+ O_{C_1,C_2,C_3,C_4,C_5}\left(\frac{X^2}{c_{01}c_{23}M^2(\log X)^{3/2}}\sum_{\substack{\chi\bmod{8}}}(\lvert \widetilde{L}_{r_0}(1,\chi_{02}\chi)\rvert+\lvert \widetilde{L}_{r_2}(1,\chi_{02}\chi)\rvert))\right) 
    \end{align*}
    where the implied constant depends at most on the $C_i$.
\end{lemma}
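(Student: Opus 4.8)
The plan is to follow the template of the proof of Lemma~\ref{fixedconductorlemmaforvanishingmainterms}, but to retain the asymptotic rather than collapse to an upper bound. Since the height condition $\|n_0n_1c_{01},n_2n_3c_{23}\|\cdot M\leq X$ is simply the conjunction of $n_0n_1c_{01}\leq X/M$ and $n_2n_3c_{23}\leq X/M$, and since $\chi_{02}(n_0n_2)=\chi_{02}(n_0)\chi_{02}(n_2)$ while all the coprimality and congruence constraints are imposed one variable at a time, the sum factors completely: $A(X)=H_{01}(X)H_{23}(X)$, where
\[
H_{01}(X)=\mathop{\sum\sum}_{\substack{n_0n_1c_{01}\leq X/M\\ \gcd(n_i,r_i)=1,\ n_i\equiv q_i\bmod{8}\ (i\in\{0,1\})}}\frac{\chi_{02}(n_0)}{\tau(n_0)\tau(n_1)}
\]
and $H_{23}(X)$ is the analogue over $n_2,n_3$ (with $\chi_{02}$ carried by $n_2$ and with $r_2,r_3,q_2,q_3$ in place of $r_0,r_1,q_0,q_1$). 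By symmetry it suffices to produce an asymptotic for $H_{01}(X)$ with a power-saving error and then multiply out.

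For $H_{01}(X)$ I would run the classical two-variable hyperbola method with split parameter $Y=\exp((\log X)^{1/6})$ as in Lemma~\ref{fixedconductorlemmaforvanishingmainterms}, writing $H_{01}(X)$ as the ``$n_0\leq Y$'' piece plus the ``$n_1\leq X/(Yc_{01}M)$'' piece minus the rectangle $\{n_0\leq Y,\ n_1\leq X/(Yc_{01}M)\}$. In the last two pieces the $n_0$-summation has length at least $Y$; after detecting $n_0\equiv q_0\bmod{8}$ by the characters modulo $8$, the inner $n_0$-sum becomes a combination of sums $\sum_{n_0,\gcd(n_0,r_0)=1}\chi_{02}(n_0)\chi(n_0)/\tau(n_0)$ against \emph{non-principal} characters modulo $8Q_{02}$ (non-principal because $\chi_{02}$ is non-principal of odd modulus coprime to $2r_0$), so Lemma~\ref{Siegel--Walfisz1} with an arbitrarily large exponent $C$ gives a saving $(\log Y)^{-C}=(\log X)^{-C/6}$; together with the trivial or Siegel--Walfisz bound for the $n_1$-sum and the hypotheses $Q_{02}\leq(\log X)^{C_1}$, $\tau(r_i)\ll(\log X)^{o(1)}$, these two pieces are $O(X/(c_{01}M(\log X)^{10}))$. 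In the surviving piece the inner $n_1$-range is of size $X^{1-o(1)}$, so the congruence-mod-$8$ part of Lemma~\ref{Siegel--Walfisz1} (trivial character, $Q=1$) evaluates $\sum_{n_1}1/\tau(n_1)$ (with the $n_1$-conditions, up to $X/(Mn_0c_{01})$) as $\frac{\mathfrak{S}_0(r_1)}{\phi(8)}\cdot\frac{X}{Mn_0c_{01}\,(\log(X/(Mn_0c_{01})))^{1/2}}$ up to the usual relative error $O((\log\log 3r_1)^{3/2}/\log X)$ and an arbitrary-power error.

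It remains to clean up. Because $n_0\leq Y$, $M\leq(\log X)^{C_4}$ and $c_{01}\leq(\log X)^{C_3}$, we have $\log(Mn_0c_{01})\ll(\log X)^{1/6}$, whence $(\log(X/(Mn_0c_{01})))^{-1/2}=(\log X)^{-1/2}+O((\log X)^{1/6-3/2})$; substituting this and summing every error contribution over $n_0\leq Y$ (trivially, using $\sum_{n_0\leq Y}1/(n_0\tau(n_0))\ll\sqrt{\log Y}$) bounds their total by $O(X/(c_{01}M(\log X)^{5/4}))$. The genuine main part is then a constant multiple of $\frac{X}{c_{01}M\sqrt{\log X}}\sum_{n_0\leq Y}\chi_{02}(n_0)/(n_0\tau(n_0))$ (over the $n_0$-conditions), the truncated series is completed to $n_0=\infty$ at a cost $O(\tau(r_0)Q_{02}(\log X)^{-C/6})$ by partial summation against Lemma~\ref{Siegel--Walfisz1} (equivalently via the Euler-product factorisation $D(s,\chi_{02}\chi)=P(s,r_0,\chi_{02}\chi)R(s,\chi_{02}\chi)L(s,\chi_{02}\chi)^{1/2}$ of Lemma~\ref{fixedconductorlemmaforvanishingmainterms}), and writing $\mathds{1}(n_0\equiv q_0\bmod{8})=\phi(8)^{-1}\sum_{\chi\bmod{8}}\overline{\chi}(q_0)\chi(n_0)$ turns the complete series into $\phi(8)^{-1}\sum_{\chi\bmod{8}}\overline{\chi}(q_0)\widetilde{L}_{r_0}(1,\chi_{02}\chi)$. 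Collecting, $H_{01}(X)=M_1+E_{01}$ with $M_1=\kappa_1\cdot\frac{X}{c_{01}M\sqrt{\log X}}\sum_{\chi\bmod{8}}\overline{\chi}(q_0)\widetilde{L}_{r_0}(1,\chi_{02}\chi)$ for an explicit $\kappa_1$ built from $\mathfrak{S}_0(2r_1)$ and $\phi(8)$, and $E_{01}\ll X/(c_{01}M(\log X)^{5/4})$; likewise $H_{23}(X)=M_2+E_{23}$ under the substitution $(r_1,r_0,q_0)\mapsto(r_3,r_2,q_2)$. Then $A(X)=M_1M_2+M_1E_{23}+E_{01}M_2+E_{01}E_{23}$: the product $M_1M_2$ reproduces the displayed main term (after the elementary identity $\mathfrak{S}_0(2r)=\mathfrak{S}_0(r)$ and the bookkeeping of the $\phi(8)$ factors), the two mixed terms are $\ll\frac{X^2}{c_{01}c_{23}M^2(\log X)^{7/4}}\sum_{\chi}(|\widetilde{L}_{r_0}(1,\chi_{02}\chi)|+|\widetilde{L}_{r_2}(1,\chi_{02}\chi)|)$ since $|M_i|\ll\frac{X}{cM\sqrt{\log X}}\sum_{\chi}|\widetilde{L}_{\cdot}(1,\chi_{02}\chi)|$, and the term $E_{01}E_{23}\ll\frac{X^2}{c_{01}c_{23}M^2(\log X)^{5/2}}$ is absorbed into the stated error using the lower bound $\sum_{\chi}|\widetilde{L}_{r_i}(1,\chi_{02}\chi)|\gg(\log X)^{-o(1)}$, itself a consequence of the same Euler-product factorisation together with Siegel's theorem for $L(1,\cdot)$.

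The work is essentially all bookkeeping rather than ideas; the only two points needing care are choosing the split level $Y=\exp((\log X)^{1/6})$ (large enough to be super-polynomial, small enough that the ``wrong-variable'' hyperbola pieces and the tail of the $\widetilde{L}$-series beat the demanded $(\log X)^{3/2}$), and propagating the error terms through the product $H_{01}(X)H_{23}(X)$ so that the final error carries the factor $\sum_{\chi}|\widetilde{L}_{r_i}(1,\chi_{02}\chi)|$ rather than a bare power of $\log X$ --- which is exactly why one must factor first and then invoke the crude-but-sufficient Siegel-type lower bound on the $\widetilde{L}$-values.
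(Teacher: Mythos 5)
Your proposal is correct and follows essentially the same route as the paper: factor $A(X)=H_{01}(X)H_{23}(X)$, run the hyperbola method on each factor with a sub-polynomial cut $Y=\exp((\log X)^{\theta})$, kill the long-$\chi_{02}$ pieces with Lemma \ref{Siegel--Walfisz1}, evaluate the divisor-sum in the surviving piece by the principal-character case of that lemma, Taylor-expand the logarithm, detect the residue mod $8$ by characters, complete the truncated series to $\widetilde{L}_{r_i}(1,\chi_{02}\chi)$, and multiply the two asymptotics. The only (harmless) divergence is that you explicitly absorb the cross term $E_{01}E_{23}$ using a Siegel-type lower bound on $\sum_{\chi}\lvert\widetilde{L}_{r_i}(1,\chi_{02}\chi)\rvert$, a step the paper leaves implicit when it simply multiplies the two expansions.
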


\begin{proof}
We write $A(X)$ as the product of two hyperbolic sums $H_{01}(X)H_{23}(X)$, where
\[
H_{01}(X) = \mathop{\sum\sum}_{\substack{n_0n_1 \leq X/c_{01}M \\ \mathrm{gcd}(n_i,r_i)=1\forall\;0\leq i\leq 1\\ n_i\equiv q_i\bmod{8}\forall\;0\leq i\leq 1}}\frac{\chi_{02}(n_0)}{\tau(n_0)\tau(n_1)}\;\;\textrm{and}\;\;
H_{23}(X) = \mathop{\sum\sum}_{\substack{n_2n_3 \leq X/c_{23}M \\ \mathrm{gcd}(n_i,r_i)=1\forall\;2\leq i\leq 3\\ n_i\equiv q_i\bmod{8}\forall\;2\leq i\leq 3}}\frac{\chi_{02}(n_2)}{\tau(n_2)\tau(n_3)}.
\]
We look at $H_{01}(X)$. Defining the parameter $Y=\exp((\log X)^{1/3})$, we use the standard hyperbola method we deduce that
\[
H_{01}(X) = H'_{01}(X) + H''_{01}(X) - H'''_{01}(X)
\]
where
\[
H'_{01}(X) = \hspace{-8pt}\mathop{\sum}_{\substack{n_0 \leq Y \\ \mathrm{gcd}(n_0,r_0)=1\\ n_0\equiv q_0\bmod{8}}}\frac{\chi_{02}(n_0)}{\tau(n_0)}\mathop{\sum}_{\substack{n_1 \leq X/n_0c_{01}M \\ \mathrm{gcd}(n_1,r_1)=1\\ n_1\equiv q_1\bmod{8}}}\frac{1}{\tau(n_1)},\;
H''_{01}(X) = \hspace{-8pt}\mathop{\sum}_{\substack{n_1 \leq X/c_{01}MY \\ \mathrm{gcd}(n_1,r_1)=1\\ n_1\equiv q_1\bmod{8}}}\frac{1}{\tau(n_1)}\mathop{\sum}_{\substack{n_0 \leq X/n_1c_{01}M \\ \mathrm{gcd}(n_0,r_0)=1\\ n_0\equiv q_0\bmod{8}}}\frac{\chi_{02}(n_0)}{\tau(n_0)},
\]
and
\[
H'''_{01}(X) = \mathop{\sum\sum}_{\substack{n_0 \leq Y,n_1\leq X/c_{01}MY \\ \mathrm{gcd}(n_i,r_i)=1\forall\;0\leq i\leq 1\\ n_i\equiv q_i\bmod{8}\forall\;0\leq i\leq 1}}\frac{\chi_{02}(n_0)}{\tau(n_0)\tau(n_1)}.
\]
Using Lemma \ref{Siegel--Walfisz1} for the sums over $n_0$ we see that
\begin{align*}
H'''_{01}(X)\ll_{C_5'}& \frac{\tau(r_0)Q_{02}X}{c_{01}M(\log X)^{1/2}(\log X)^{C_5'/3}},
\end{align*}
and
\begin{align*}
H''_{01}(X)\ll_{C_5'}& \sum_{n_1\leq X/c_{01}MY}\frac{\tau(r_0)Q_{02}X}{n_1c_{01}M(\log X/n_1c_{01}MY)^{(C_5'+1)/3}} \ll_{C_5'}\frac{\tau(r_0)Q_{02}X}{c_{01}M(\log X)^{C_5'/3}}
\end{align*}
where in each case we have used the bound $(\log X/c_{01}MY) = (\log X)(1+O((\log X)^{-2/3}))$ which follows from the fact that $\log c_{01}MY \ll (\log X)^{1/3}$. In the above bounds we can write $C_5' = 3C_5$ for some $C_5>0$ to obtain
\[
H''_{01}(X),H'''_{01}(X) \ll_{C_5} \frac{\tau(r_0)Q_{02}X}{c_{01}M(\log X)^{C_5}}.
\]
For $H'_{01}(X)$ we apply Lemma \ref{Siegel--Walfisz1} for non-principal characters with $C_5>0$ sufficiently large. This will give
\begin{align*}
H'_{01}(X) =& \hspace{-10pt}\mathop{\sum}_{\substack{n_0 \leq Y \\ \mathrm{gcd}(n_0,2r_0)=1\\ n_0\equiv q_0\bmod{8}}}\hspace{-10pt}\frac{\chi_{02}(n_0)}{\tau(n_0)} \hspace{-5pt}\left(\frac{\mathfrak{S}_0(2r_1)X}{n_0c_{01}M\sqrt{(\log X/n_0c_{01}M)}} \hspace{-3pt}+ O\hspace{-2pt}\left(\frac{X(\log\log 3r_1)^{3/2}}{n_0c_{01}M(\log X)^{3/2}}\right)\right),
\end{align*}
where we have used $\log X/c_{01} M \gg \log X$ coming from $c_{01}, M\leq (\log X)^{C_2}$. Using the bound
\[
\mathop{\sum}_{\substack{n_0\leq Y}}\frac{1}{n_0\tau(n_0)} \ll \sqrt{\log Y} \ll (\log X)^{1/6}
\]
we obtain
\begin{align*}
H'_{01}(X) =& \frac{\mathfrak{S}_0(2r_1)X}{c_{01}M}\hspace{-5pt}\mathop{\sum}_{\substack{n_0 \leq Y \\ \mathrm{gcd}(n_0,2r_0)=1\\ n_0\equiv q_0\bmod{8}}}\frac{\chi_{02}(n_0)}{n_0\tau(n_0)\sqrt{(\log X/n_0c_{01}M)}} + O_{C_3}\left(\frac{X(\log\log 3r_1)^{3/2}}{c_{01}M(\log X)^{4/3}}\right).
\end{align*}
For the front term we use the following:
\[
\frac{1}{\sqrt{(\log X/n_0c_{01}M)}} = \frac{1}{\sqrt{(\log X)}}\cdot\frac{1}{\left(1-\frac{\log n_0c_{01}M}{\log X}\right)^{1/2}} = \frac{1}{\sqrt{(\log X)}}\left(1 + O\left(\frac{1}{(\log X)^{2/3}}\right)\right).
\]
It follows that
\[
H'_{01}(X) = \frac{\mathfrak{S}_0(2r_1)X}{c_{01}M\sqrt{\log X}}\sum_{\substack{n_0\leq Y\\ \mathrm{gcd}(n_0,2r_0)=1\\ n_0\equiv q_0\bmod{8}}}\frac{\chi_{02}(n_0)}{n_0\tau(n_0)} + O_{C_3}\left(\frac{X}{c_{01}M\log X}\right).
\]
Next we detect the condition $8|n_0-q_0$ using Dirichlet characters. Thus the main term sum in $H'_{01}(X)$ becomes
\begin{equation}\label{cutoffdirichletseries}
\frac{1}{4}\sum_{\substack{\chi\bmod{8}}}\overline{\chi}(q_0)\sum_{\substack{n_0\leq Y\\ \mathrm{gcd}(n_0,r_0)=1 }}\frac{\chi_{02}\chi(n_0)}{n_0\tau(n_0)},
\end{equation}
where $\chi_{02}\chi(n_0) = \chi_{02}(n_0)\chi(n_0)$ is a non-principal character modulo $8Q_{02}$ and are non-principal since $Q_{02}$ is odd and $\chi_{02}$ non-principal. Using a similar argument to that seen in the proof of Lemma \ref{fixedconductorlemmaforvanishingmainterms}, $\widetilde{L}_{r_0}(1,\chi_{02}\chi)$ converges and 
\[
\widetilde{L}_{r_0}(1,\chi_{02}\chi) = P(1,r_0,\chi_{02}\chi')R(1,\chi_{02}\chi)L(1,\chi_{02}\chi)^{1/2}
\]
where $P(1,r_0,\chi_{02}\chi)$ and $R(1,\chi_{02}\chi)$ are
\[
\prod_{\substack{p\;\text{prime}\\p|r_0}}\left(1+\sum_{j=1}^{\infty}\frac{\chi_{02}\chi(p)^j}{(j+1)p^{j}}\right)^{-1}\;\;\text{and}\;\;\prod_{p\;\text{prime}}\left(1+\sum_{j=1}^{\infty}\frac{\chi_{02}\chi(p)^j}{(j+1)p^{j}}\right)\left(1-\frac{\chi_{02}\chi(p)}{p}\right)^{1/2}
\]
respectively, and
\[
L(1,\chi_{02}\chi) = \prod_{p\;\text{prime}}\left(1-\frac{\chi_{02}\chi(p)}{p}\right)^{-1}
\]
is the $L$-function for the character $\chi_{02}\chi$. %It follows from this decomposition that the Dirichlet series converges whenever $\Re(s)>1/2$ and $L(s,\chi_{02}\chi')\neq 0$. Using the zero free region for $L$-functions of primitive characters and Seigel's Theorem it follows, in particular, that $D(1,\chi_{02}\chi')$ converges and that
%\[
%\widetilde{L}_r(1,0,\chi_{02}\chi') = P(1,r_0,\chi_{02}\chi')R(1,\chi_{02}\chi')L(1,\chi_{02}\chi')^{1/2}.
%\]
Seeing this, we may extend the sum over $n_0$ in \eqref{cutoffdirichletseries} at the cost of an error term. This equation then becomes
\[
=\frac{1}{4}\sum_{\substack{\chi\bmod{8}}}\overline{\chi}(q_0)P(1,r_0,\chi_{02}\chi)R(1,\chi_{02}\chi)L(1,\chi_{02}\chi)^{1/2} + O_{C_5}\left(\frac{Q_{02}}{(\log X)^{C_5}}\right),
\]
where we have used partial summation and Lemma \ref{Siegel--Walfisz1} to bound the tail of this series. We therefore see that $H'_{01}(X)$ is equal to
\begin{align*}
\frac{\mathfrak{S}_0(2r_1)X}{4c_{01}M\sqrt{\log X}}\sum_{\substack{\chi\bmod{8}}}\overline{\chi}(q_0)P(1,r_0,\chi_{02}\chi)R(1,\chi_{02}\chi)L(1,\chi_{02}\chi)^{1/2}+ O_{C_3}\left(\frac{X}{c_{01}M\log X}\right).
\end{align*}
Putting this together with $H''_{01}(X)$ and $H'''_{01}(X)$ we see that $H_{01}(X)$ is then
\begin{align*}
\frac{\mathfrak{S}_0(2r_1)X}{4c_{01}M\sqrt{\log X}}\sum_{\substack{\chi\bmod{8}}}\overline{\chi}(q_0)P(1,r_0,\chi_{02}\chi)R(1,\chi_{02}\chi)L(1,\chi_{02}\chi)^{1/2}+ O_{C_3}\left(\frac{X}{c_{01}M\log X}\right).
\end{align*}
Similarly we may obtain that $H_{23}(X)$ is
\begin{align*}
=\frac{\mathfrak{S}_0(2r_3)X}{4c_{23}M\sqrt{\log X}}\sum_{\substack{\chi' \; \text{char.}\\ \bmod{8}}}\overline{\chi'}(q_0)P(1,r_3,\chi_{02}\chi')R(1,\chi_{02}\chi')L(1,\chi_{02}\chi')^{1/2}+ O_{C_3}\left(\frac{X}{c_{23}M(\log X)}\right).
\end{align*}
Multiplying these together we obtain the result.
\end{proof}

In order to take the desired averages over the characters, we will use the fact that $\widetilde{L}_{r}(1,\chi)$ looks roughly like $L(1,\chi)^{1/2}$. In fact, by noting the bounds
\[
P(1,r,\chi) \ll \tau(r),\;
R(1,\chi) \ll 1
\]
for any non-principal character $\chi$ and absolute implied constants, we observe
\begin{equation}\label{DirichletFunctionDecomposition}
 \widetilde{L}_{r}(1,\chi)^2 \ll \tau(r)^2L(1,\chi)
\end{equation}
since $L(1,\chi)>0$ for real non-principal characters $\chi$.\\

Recall that we use the notation $\psi_{m}(\cdot)$ for an odd integer $m$ to denote generically the Jacobi symbol $\left(\frac{\cdot}{m}\right)$ or $\left(\frac{m}{\cdot}\right)$. We may use quadratic reciprocity to interchange between the two if necessary (in the following proof, the characters modulo $8$ and square-free functions ensure that the variables are odd). The following is a consequence of Lemma \ref{Lfunctionweirdaverage}.

\begin{corollary}\label{Lfunctionweirdaveragecor}
    Let $X\geq 3$ and $C>0$. Fix some real number $0<c\leq 1$. Suppose that $\chi$ is a character modulo $8$. Then
    \[
    \mathop{\sum\sum}_{\substack{\|m_0,cm_1\|\leq X\\m_0m_1\neq 1}}\frac{\mu^2(2m_0m_1)}{\tau(m_0)\tau(m_1)}L(1,\chi\cdot\psi_{m_0m_1}) \ll \frac{X^2}{c\sqrt{\log X}},
    \]
    where the implied constant is absolute.
\end{corollary}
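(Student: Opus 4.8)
The plan is to exploit two elementary observations. First, since $0<c\le 1$, the height region $\|m_0,cm_1\|\le X$ is simply the rectangle $\{m_0\le X\}\cap\{m_1\le X/c\}$; in particular every $m_0$ occurring in the sum satisfies $m_0\le X\le X/c$, which is exactly the hypothesis needed to feed $m_0$ as the ``fixed parameter'' into the single-variable estimate of Lemma~\ref{Lfunctionweirdaverage} when the inner summation variable is $m_1$ and runs up to $X/c$. Second, $1/\tau$ is multiplicative with $1/\tau(p)=1/2$ and $1/\tau(p^k)\le 1/2$ for $k\ge 2$, and $\sum_{p\le Y}\tfrac{1/\tau(p)}{p}=\tfrac12\log\log Y+O(1)$, so it meets the hypotheses of Theorem~\ref{Hooleyandthelargesieve1} with $\alpha=1/2$; hence Lemma~\ref{Lfunctionweirdaverage} applies with $f=1/\tau$ and $q=8$ (the modulus of $\chi$), and its implied constant is then absolute. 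Note also that for $q=8$ one has $\mu^2(qmm_1)=\mu^2(2mm_1)$, so the sum in Lemma~\ref{Lfunctionweirdaverage} matches the shape of our inner sums verbatim.

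First I would split off the degenerate terms. The terms with $m_0=1$ contribute $\sum_{1<m_1\le X/c}\frac{\mu^2(2m_1)}{\tau(m_1)}L(1,\chi\cdot\psi_{m_1})$, which is precisely the sum in Lemma~\ref{Lfunctionweirdaverage} with its ``$m_1$'' taken to be $1$ and summation range $X/c$; it is therefore $\ll \frac{X/c}{\sqrt{\log(X/c)}}\ll \frac{X}{c\sqrt{\log X}}$. For the remaining terms $m_0>1$ (so automatically $m_0m_1\neq 1$), using $\mu^2(2m_0m_1)=\mu^2(2m_0)\mu^2(2m_1)\mathds{1}(\gcd(m_0,m_1)=1)$ I would isolate the $m_1=1$ contribution and write the rest as $\sum_{1<m_0\le X}\frac{\mu^2(2m_0)}{\tau(m_0)}L(1,\chi\cdot\psi_{m_0})+\sum_{1<m_0\le X}\frac{1}{\tau(m_0)}\sum_{1<m_1\le X/c}\mu^2(2m_0m_1)\frac{1}{\tau(m_1)}L(1,\chi\cdot\psi_{m_0m_1})$. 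The first sum is again of the form in Lemma~\ref{Lfunctionweirdaverage} (``$m_1$''$=1$, range $X$), so it is $\ll X/\sqrt{\log X}$. In the second, for each fixed odd squarefree $m_0\le X$ the inner sum over $m_1$ is Lemma~\ref{Lfunctionweirdaverage} with ``$m_1$''$=m_0$ (legitimate as $m_0$ is odd and $m_0\le X\le X/c$) and range $X/c$, hence $\ll \frac{X/c}{\sqrt{\log(X/c)}}\ll \frac{X}{c\sqrt{\log X}}$ uniformly in $m_0$; summing the outer weight via $\sum_{m_0\le X}\frac{1}{\tau(m_0)}\ll \frac{X}{\sqrt{\log X}}$ (which is \eqref{ShiuIntro} with $f=1/\tau$, $\alpha=1/2$) gives $\ll \frac{X^2}{c\log X}$. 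Collecting the bounds, $T\ll \frac{X}{c\sqrt{\log X}}+\frac{X}{\sqrt{\log X}}+\frac{X^2}{c\log X}$, and since $c\le 1\le X$ each of these is $\ll \frac{X^2}{c\sqrt{\log X}}$, which is the assertion.

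There is no genuine analytic difficulty; the work is entirely organisational, and the step requiring the most care is the bookkeeping around the asymmetry $c\le 1$ and the degenerate terms. One must notice that the $\max$-type region is a rectangle, that $c\le 1$ is exactly what guarantees that the fixed parameter $m_0$ always lies within the summation range $X/c$ required by Lemma~\ref{Lfunctionweirdaverage}, and that the ``boundary'' pieces ($m_0=1$; $m_0>1$, $m_1=1$; the case $m_0=m_1=1$ being excluded) are each disposed of by one further direct invocation of the same lemma. The only routine checks are that $1/\tau$ satisfies the hypotheses of Theorem~\ref{Hooleyandthelargesieve1} and that the relevant multiplicative weights satisfy Shiu's conditions, both with $\alpha=1/2$.
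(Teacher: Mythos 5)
Your proposal is correct and is essentially the paper's own argument: split off the degenerate terms ($m_1=1$, and in your case also $m_0=1$), and for the remaining terms fix $m_0\leq X\leq X/c$ and apply Lemma \ref{Lfunctionweirdaverage} with $f=1/\tau$ to the inner sum over $m_1\leq X/c$, then sum over $m_0$ (the paper does this last sum trivially over $m_0\leq X$, whereas you keep the weight $1/\tau(m_0)$ and invoke \eqref{ShiuIntro}, which only sharpens the intermediate bound to $X^2/(c\log X)$). One cosmetic slip: since $\mu^2(8mm_1)\equiv 0$, the identity ``$\mu^2(qmm_1)=\mu^2(2mm_1)$ for $q=8$'' is false as stated, but this merely reflects the imprecise weight in the statement of Lemma \ref{Lfunctionweirdaverage} (whose proof in fact treats the weight $\mu^2(2mm_1)$ for a character $\chi$ of any fixed modulus), and the paper's own proof of the corollary invokes that lemma with the same looseness (citing $q=2$ while $\chi$ has modulus $8$), so this is not a genuine gap in your argument.
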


\begin{proof}
    When $m_1=1$ then we use Lemma \ref{Lfunctionweirdaverage} with $f=\frac{1}{\tau}$, $q=2$, for the sum over $m_0$, since $m_0m_1\neq 1$. When $m_1>1$ then we may use Lemma \ref{Lfunctionweirdaverage} with $f=\frac{1}{\tau}$, $q=2$, to bound the sum over $m_1$. In this case:
    \begin{align*}
    \mathop{\sum\sum}_{\substack{\|m_0,cm_1\|\leq X\\m_1> 1}}\frac{\mu^2(2m_0m_1)}{\tau(m_0)\tau(m_1)}L(1,\chi\cdot\psi_{m_0m_1}) &\ll \mathop{\sum}_{\substack{m_0\leq X}}\frac{X}{c\sqrt{\log X/c}}\ll \frac{X^2}{c\sqrt{\log X}}.
    \end{align*}
\end{proof}

We may now prove the main result of this section:
\begin{proposition}\label{Asymmetrictypeaverage2}
    Let $X\geq 3$, $C_1,C_2>0$, fix $\b{q}\in(\Z/8\Z)^{*4}$ and $\Tilde{\b{q}}\in(\Z/8\Z)^{*2}$ and let $\Tilde{\b{r}}\in\N^2$ be a vector of odd integers. Fix odd integers $1\leq r_0,r_1,r_2,r_3,\Tilde{r}_0,\Tilde{r}_1\leq (\log X)^{C_1}$ and fix $1\leq c_{01},c_{23},\Tilde{c}_0,\Tilde{c}_1\leq (\log X)^{C_2}$. Then for any $\b{m}\in\N^{2}$ we define
    \[
    T(X,\b{m}) = \mathop{\sum\sum\sum\sum}_{\substack{\|n_0n_1c_{01},n_2n_3c_{23}\|\cdot \|m_0\Tilde{c}_0,m_1\Tilde{c}_1\| \leq X \\ \mathrm{gcd}(n_i,2r_i)=1\forall\;0\leq i\leq 3\\ n_i\equiv q_i\bmod{8}\forall\;0\leq i\leq 3}}\frac{\psi_{m_0m_1}(n_0n_2)}{\tau(n_0)\tau(n_1)\tau(n_2)\tau(n_3)}.
    \]
    Then for any $C_3>0$,
    \[
    \mathop{\sum\sum}_{\substack{\|m_0,m_1\|\leq (\log X)^{C_3}\\ m_i\equiv \Tilde{q}_i\bmod{8}\;\forall 0\leq i\leq 1\\ \mathrm{gcd}(m_i,\Tilde{r}_i)=1\;\forall 0\leq i\leq 1\\ m_0m_1\neq 1}}\frac{\mu^2(m_0m_1)}{\tau(m_1)\tau(m_2)} |T(X,\b{m})| \ll_{C_1,C_2,C_3} \frac{\tau(r_0)\tau(r_2)X^2(\log\log X)^{1/2}}{c_{01}c_{23}\Tilde{c}_0\Tilde{c}_1(\log X)}.
    \]
    where the implied constant depends at most on the $C_i$.
\end{proposition}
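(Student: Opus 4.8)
The plan is to evaluate each inner sum $T(X,\b m)$ by Lemma~\ref{Lfunctioncheckpoint} and then to average the resulting products of cut‑off Dirichlet series $\widetilde L_{r}$ over the conductor $m_0m_1$ by means of Lemma~\ref{Lfunctionweirdaverage}. For every $\b m$ occurring in the sum the modulus $m_0m_1$ is odd, squarefree and $\geq 3$ (as $m_i\equiv\tilde q_i\bmod 8$, $\mu^2(m_0m_1)=1$ and $m_0m_1\neq 1$), so $\psi_{m_0m_1}$ is a non‑principal real character of conductor $\leq(\log X)^{2C_3}$, and $T(X,\b m)$ is precisely a sum of the shape treated in Lemma~\ref{Lfunctioncheckpoint} with $Q_{02}=m_0m_1$, $\chi_{02}=\psi_{m_0m_1}$ and $M=\|m_0\tilde c_0,m_1\tilde c_1\|\leq(\log X)^{C_2+C_3}$ (quadratic reciprocity lets us use whichever form of $\psi_{m_0m_1}$ is convenient, the $n_i$ being odd; the hypotheses of Lemma~\ref{Lfunctioncheckpoint} on the size of $Q_{02}$ and on $\mathrm{gcd}(m_0m_1,2r_0r_2)$ — which merely suppresses Euler factors — are in force). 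Applying it and taking absolute values gives, using $\mathfrak S_0(2r_i)\ll 1$,
\[
|T(X,\b m)|\ll\frac{X^2\,g(m_0m_1)}{c_{01}c_{23}M^2\log X}+\frac{X^2\,h(m_0m_1)}{c_{01}c_{23}M^2(\log X)^{3/2}},
\]
where $g(m_0m_1)=\sum_{\chi,\chi'\bmod 8}\bigl|\widetilde L_{r_0}(1,\psi_{m_0m_1}\chi)\widetilde L_{r_2}(1,\psi_{m_0m_1}\chi')\bigr|$ and $h(m_0m_1)=\sum_{\chi\bmod 8}\bigl(|\widetilde L_{r_0}(1,\psi_{m_0m_1}\chi)|+|\widetilde L_{r_2}(1,\psi_{m_0m_1}\chi)|\bigr)$.

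The error term is handled crudely. By \eqref{DirichletFunctionDecomposition} and $L(1,\psi_{m_0m_1}\chi)\ll\log(m_0m_1)\ll_{C_3}\log\log X$ one has $h(m_0m_1)\ll_{C_3}(\tau(r_0)+\tau(r_2))(\log\log X)^{1/2}$, while $M^2\geq m_0m_1\tilde c_0\tilde c_1$ together with \eqref{ShiuIntro} (with $\alpha=\tfrac12$) and partial summation gives $\sum_{\b m}\mu^2(m_0m_1)/(\tau(m_0)\tau(m_1)M^2)\ll_{C_3}\log\log X/(\tilde c_0\tilde c_1)$. Hence the error term contributes $O\bigl((\tau(r_0)+\tau(r_2))X^2(\log\log X)^{3/2}/(c_{01}c_{23}\tilde c_0\tilde c_1(\log X)^{3/2})\bigr)$, which is negligible against the asserted bound because $(\log\log X)/(\log X)^{1/2}\to0$.

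For the main term I would apply the Cauchy--Schwarz inequality in $\b m$ to each of the sixteen summands of $g(m_0m_1)$, with the weight $w(\b m)=\mu^2(m_0m_1)/(\tau(m_0)\tau(m_1)M^2)$, and then invoke \eqref{DirichletFunctionDecomposition} in the form $\sum_{\b m}w(\b m)|\widetilde L_{r}(1,\psi_{m_0m_1}\chi)|^2\ll\tau(r)^2\sum_{\b m}w(\b m)L(1,\psi_{m_0m_1}\chi)$ — this is exactly where the product $\tau(r_0)\tau(r_2)$, rather than $\tau(r_0)^2+\tau(r_2)^2$, is produced. The whole proposition then rests on the estimate
\[
\sum_{\b m}\frac{\mu^2(m_0m_1)\,L(1,\psi_{m_0m_1}\chi)}{\tau(m_0)\tau(m_1)\,\|m_0\tilde c_0,m_1\tilde c_1\|^{2}}\ \ll_{C_3}\ \frac{(\log\log X)^{1/2}}{\tilde c_0\tilde c_1}\qquad(\chi\bmod 8),
\]
since it yields $\sum_{\b m}w(\b m)g(m_0m_1)\ll\tau(r_0)\tau(r_2)(\log\log X)^{1/2}/(\tilde c_0\tilde c_1)$ and hence the main term $\ll\tau(r_0)\tau(r_2)X^2(\log\log X)^{1/2}/(c_{01}c_{23}\tilde c_0\tilde c_1\log X)$, as wanted. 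To prove the displayed bound one drops the congruence and coprimality conditions on $\b m$ by positivity of $L(1,\cdot)$ for real non‑principal characters, and — assuming $\tilde c_0\geq\tilde c_1$, the case $\tilde c_1>\tilde c_0$ being symmetric — fixes $m_0$ and splits the $m_1$‑sum at $m_1=m_0\tilde c_0/\tilde c_1$. On $m_1\leq m_0\tilde c_0/\tilde c_1$ one has $M=m_0\tilde c_0$ and the $m_1$‑sum carries weight $1/\tau(m_1)$, so Lemma~\ref{Lfunctionweirdaverage} (with $f=1/\tau$, $\alpha=\tfrac12$, modulus $8$, fixed parameter $m_0$, whose size hypothesis $m_0\leq m_0\tilde c_0/\tilde c_1$ now holds) bounds it by $\ll(m_0\tilde c_0/\tilde c_1)/\sqrt{\log(m_0\tilde c_0/\tilde c_1)}$; on $m_1>m_0\tilde c_0/\tilde c_1$ one has $M=m_1\tilde c_1$, the $m_1$‑sum carries a factor $m_1^{-2}$, and one combines Lemma~\ref{Lfunctionweirdaverage} with Abel summation, the lower bound $m_1>m_0\tilde c_0/\tilde c_1$ producing the factor $1/(\tilde c_0\tilde c_1)$. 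In either range one is left with $\tfrac{1}{\tilde c_0\tilde c_1}\sum_{m_0}\tfrac{\mu^2(m_0)}{m_0\tau(m_0)}(\cdots)\ll\tfrac{1}{\tilde c_0\tilde c_1}\sum_{m\leq(\log X)^{C_3}}\tfrac{\mu^2(m)}{m\tau(m)}\ll(\log\log X)^{1/2}/(\tilde c_0\tilde c_1)$, the boundary pieces $m_0=1$ and $m_1=1$ being treated with the same inputs and contributing only $O(1/(\tilde c_0\tilde c_1))$.

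The main obstacle is the displayed $L$‑value average. The tempting simplification $\|m_0\tilde c_0,m_1\tilde c_1\|^{-2}\leq(m_0m_1\tilde c_0\tilde c_1)^{-1}$ followed by the trivial estimate $L(1,\cdot)\ll\log\log X$ loses a full factor of $\log\log X$ and only delivers $\log\log X/(\tilde c_0\tilde c_1)$; one must keep the $\max$ in $M$, split into the two regimes according to which of $m_0\tilde c_0$, $m_1\tilde c_1$ dominates, always sum the variable carrying the smaller constant on the inside (so that the size hypothesis on the fixed parameter in Lemma~\ref{Lfunctionweirdaverage} is respected), and systematically replace $L(1,\cdot)\ll\log$ by Lemma~\ref{Lfunctionweirdaverage} — it is the $\alpha=\tfrac12$ Shiu‑type cancellation built into that lemma that restores the extra $(\log\log X)^{-1/2}$. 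Everything else is bookkeeping of the degenerate ranges and of the hypotheses of Lemma~\ref{Lfunctioncheckpoint}.
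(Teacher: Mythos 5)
Your proposal is correct and follows essentially the same route as the paper: Lemma \ref{Lfunctioncheckpoint} applied to each $T(X,\b{m})$, Cauchy--Schwarz in $\b{m}$ together with \eqref{DirichletFunctionDecomposition} to reduce matters to the weighted average of $L(1,\psi_{m_0m_1}\chi)$ against $\mu^2(m_0m_1)/(\tau(m_0)\tau(m_1)\|m_0\Tilde{c}_0,m_1\Tilde{c}_1\|^2)$, and the neutralised large sieve (Lemma \ref{Lfunctionweirdaverage}) to bound that average by $(\log\log X)^{1/2}/(\Tilde{c}_0\Tilde{c}_1)$. The only deviations are cosmetic: you split according to which of $m_0\Tilde{c}_0$, $m_1\Tilde{c}_1$ realises the maximum and apply Lemma \ref{Lfunctionweirdaverage} with Abel summation in the inner variable, where the paper instead partial-sums over $M=\|m_0\Tilde{c}_0,m_1\Tilde{c}_1\|$ and invokes Corollary \ref{Lfunctionweirdaveragecor} with the rescaling $c=\min(\Tilde{c}_0,\Tilde{c}_1)/\|\Tilde{c}_0,\Tilde{c}_1\|$, and you bound the error term by the pointwise estimate $\widetilde{L}_r\ll\tau(r)(\log\log X)^{1/2}$ rather than by a second Cauchy--Schwarz with Lemma \ref{maintermlemma3}; both executions yield the stated bound.
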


\begin{proof}
    Using Lemma \ref{Lfunctioncheckpoint} on the $T(X,\b{m})$ and the triangle inequality we see that the sum over $\b{m}$ in the propostion is 
    bounded by
    \begin{align*}
    \frac{\mathfrak{S}_0(2r_1)\mathfrak{S}_0(2r_3)X^2}{16c_{01}c_{23}(\log X)}\hspace{-2pt}\mathop{\sum\sum}_{\substack{\chi,\chi'\bmod{8}}} M(X,\chi,\chi') + O\left(\frac{X^2}{c_{01}c_{23}(\log X)^{3/2}}\hspace{-5pt}\mathop{\sum}_{\substack{\chi\bmod{8}}}\hspace{-5pt}E(X,\chi)\right),
    \end{align*}
    where
    \[
    M(X,\chi,\chi') = \hspace{-5pt}\mathop{\sum\sum}_{\substack{\|m_0,m_1\|\leq (\log X)^{C_3}\\ m_i\equiv \Tilde{q}_i\bmod{8}\;\forall 0\leq i\leq 1\\ \mathrm{gcd}(m_i,\Tilde{r})=1\;\forall 0\leq i\leq 1\\ m_0m_1\neq 1}}\frac{\mu^2(m_0m_1)}{\tau(m_1)\tau(m_2)\|m_0\Tilde{c}_0,m_1\Tilde{c}_1\|^2}|\widetilde{L}_{r_0}(1,\psi_{m_0m_1}\chi)||\widetilde{L}_{r_2}(1,\psi_{m_0m_1}\chi')|
    \]
    and
    \[
    E(X,\chi) = \sum_{j=0,2}\mathop{\sum\sum}_{\substack{\|m_0,m_1\|\leq (\log X)^{C_3}\\ m_0m_1\neq 1}}\frac{\mu^2(m_0m_1)}{\tau(m_1)\tau(m_2)\|m_0\Tilde{c}_0,m_1\Tilde{c}_1\|^2}\left|\widetilde{L}_{r_j}(1,\psi_{m_0m_1}\chi)\right|.
    \]
    We first bound $M(X,\chi,\chi')$. We may write this sum in the form
    \[
    M(X,\chi,\chi') = \mathop{\sum\sum}_{\substack{\|m_0,m_1\|\leq (\log X)^{C_3}}} a_{(m_0,m_1)} b_{(m_0,m_1)}c_{(m_0,m_1)}
    \]
    where $b_{(m_0,m_1)} = |\widetilde{L}_{r_0}(1,\psi_{m_0m_1}\chi)|$, $c_{(m_0,m_1)} = |\widetilde{L}_{r_2}(1,\psi_{m_0m_1}\chi')|$ and $a_{(m_0,m_1)}$ represents the remaining summands and conditions. We may therefore re-index this sum as a sum over a single variable,
    \[
    M(X,\chi,\chi') = \sum_{l\leq (\log X)^{2C_3}} \tilde{a}_l \tilde{b}_l \tilde{c}_l.
    \]
    Using Cauchy's inequality, and then returning to the original double indexing, we obtain
    \[
    M(X,\chi,\chi') = \left(\mathop{\sum\sum}_{\substack{\|m_0,m_1\|\leq (\log X)^{C_3}}} a_{(m_0,m_1)} b_{(m_0,m_1)}^2\right)^{1/2}\left(\mathop{\sum\sum}_{\substack{\|m_0,m_1\|\leq (\log X)^{C_3}}} a_{(m_0,m_1)}c_{(m_0,m_1)}^2\right)^{1/2}.
    \]
    In other words, we have now obtained $M(X,\chi,\chi')\ll R_{r_0}(X,\chi)^{1/2}R_{r_2}(X,\chi')^{1/2}$ where
    \begin{align*}
    R_r(X,\chi) = \mathop{\sum\sum}_{\substack{\|m_0,m_1\|\leq (\log X)^{C_3}\\ m_0m_1\neq 1}}\frac{\mu^2(2m_0m_1)}{\tau(m_1)\tau(m_2)\|m_0\Tilde{c}_0,m_1\Tilde{c}_1\|^2}\lvert \widetilde{L}_r(1,\psi_{m_0m_1}\chi)\rvert^2.
    \end{align*}
    By \eqref{DirichletFunctionDecomposition} we have
    \[
    R_r(X,\chi) \ll \tau(r)^2\mathop{\sum\sum}_{\substack{\|m_0,m_1\|\leq (\log X)^{C_3}\\ m_0m_1\neq 1}}\frac{\mu^2(2m_0m_1)}{\tau(m_1)\tau(m_2)\|m_0\Tilde{c}_0,m_1\Tilde{c}_1\|^2} L(1,\psi_{m_0m_1}\chi).
    \]
    Writing
    \[
    a(M) = \mathop{\sum\sum}_{\substack{\b{m}\in\N^2, \|m_0\Tilde{c}_0,m_1\Tilde{c}_1\|=M\\ m_0m_1\neq 1}}\frac{\mu^2(2m_0m_1)}{\tau(m_1)\tau(m_2)} L(1,\psi_{m_0m_1}\chi)
    \]
    we obtain
    \begin{equation}\label{checkpoint1forsection4asymmetric}
    R_r(X,\chi) \ll \tau(r)^2\sum_{2\leq M\leq \|\Tilde{c}_0,\Tilde{c}_1\|(\log X)^{C_3}}\frac{a(M)}{M^2}.
    \end{equation}
    By partial summation the sum on the right hand side of \eqref{checkpoint1forsection4asymmetric} is then 
    \begin{align*}
     \frac{1}{\|\Tilde{c}_0,\Tilde{c}_1\|^2(\log X)^{2C_3}}\sum_{2\leq M\leq \|\Tilde{c}_0,\Tilde{c}_1\|(\log X)^{C_3}}\hspace{-25pt}a(M)+ 2\int_{2}^{\|\Tilde{c}_0,\Tilde{c}_1\|(\log X)^{C_3}}\frac{\sum_{2\leq M\leq t}a(M)}{t^3}dt. 
    \end{align*}
    Using the fact that $a(M) = 0$ unless $M\geq \|\Tilde{c}_0,\Tilde{c}_1\|$ and the change of variables $t=\|\Tilde{c}_0,\Tilde{c}_1\|u$ the integral becomes
    \[
    \int_{2}^{\|\Tilde{c}_0,\Tilde{c}_1\|(\log X)^{C_3}}\frac{\sum_{2\leq M\leq t}a(M)}{t^3}dt = \int_{1}^{(\log X)^{C_3}}\frac{\sum_{2\leq M\leq \|\Tilde{c}_0,\Tilde{c}_1\|u}a(M)}{\|\Tilde{c}_0,\Tilde{c}_1\|^2u^3}du.
    \]
    Thus
    \begin{align}\label{checkpoint3forsection4asymmetric}
    \frac{R_r(X,\chi)}{\tau(r)^2} \ll \frac{1}{\|\Tilde{c}_0,\Tilde{c}_1\|^2(\log X)^{2C_3}}\sum_{2\leq M\leq \|\Tilde{c}_0,\Tilde{c}_1\|(\log X)^{C_3}}\hspace{-25pt}a(M) +  \int_{1}^{(\log X)^{C_3}}\frac{\sum_{2\leq M\leq \|\Tilde{c}_0,\Tilde{c}_1\|u}a(M)}{\|\Tilde{c}_0,\Tilde{c}_1\|^2u^3}dt. 
    \end{align}
    Now, upon unwrapping $a(M)$, we may see that,
    \begin{align*}
    \sum_{2\leq M\leq \|\Tilde{c}_0,\Tilde{c}_1\|Y} a(M) &=  \mathop{\sum\sum}_{\substack{ \|m_0\frac{\Tilde{c}_0}{\|\Tilde{c}_0,\Tilde{c}_1\|},m_1\frac{\Tilde{c}_1}{\|\Tilde{c}_0,\Tilde{c}_1\|}\|\leq Y\\ m_0m_1\neq 1}}\frac{\mu^2(2m_0m_1)}{\tau(m_1)\tau(m_2)} L(1,\psi_{m_0m_1}\chi).
    \end{align*}
    By Corollary \ref{Lfunctionweirdaveragecor} with $c=\frac{\min(\Tilde{c}_0,\Tilde{c}_1)}{\|\Tilde{c}_0,\Tilde{c}_1\|}$ we get
    \begin{align*}
    \frac{R_r(X,\chi)}{\tau(r)^2} &\ll_{C_2,C_3} \frac{1}{\min(\Tilde{c}_0,\Tilde{c}_1)\|\Tilde{c}_0,\Tilde{c}_1\|\sqrt{\log\log X}} + \int_{1}^{(\log X)^{C_3}}\hspace{-22pt}\frac{1}{\min(\Tilde{c}_0,\Tilde{c}_1)\|\Tilde{c}_0,\Tilde{c}_1\| u\sqrt{\log u}}dt.
    \end{align*}
    This is $O(\sqrt{\log\log X}/\Tilde{c}_0\Tilde{c}_1)$, hence
    \[
    M(X,\chi,\chi') \ll_{C_2,C_3} \frac{\tau(r_0)\tau(r_2)\sqrt{\log\log X}}{\Tilde{c}_0\Tilde{c}_1}.
    \]
    To deal with $E(X,\chi)$ we treat the sum over each $\widetilde{L}_{r_j}(1,\psi_{m_0m_1}\chi)$ separately. Calling each one $E_j(X,\chi)$ we once more use Cauchy's inequality as before to get $E_j(X,\chi) \ll (\mathcal{E}_j(X,\chi)\mathcal{E}_j'(X))^{1/2}$
    where
    \[
    \mathcal{E}_j(X,\chi) = \tau(r_j)^2\mathop{\sum\sum}_{\substack{\|m_0,m_1\|\leq (\log X)^{C_3}\\ m_0m_1\neq 1}}\frac{\mu^2(m_0m_1)}{\tau(m_1)\tau(m_2)\|m_0\Tilde{c}_0,m_1\Tilde{c}_1\|^2} L(1,\psi_{m_0m_1}\chi)
    \]
    and
    \[
    \mathcal{E}_j'(X) = \mathop{\sum\sum}_{\substack{\|m_0,m_1\|\leq (\log X)^{C_3}\\ m_0m_1\neq 1}}\frac{\mu^2(m_0m_1)}{\tau(m_1)\tau(m_2)\|m_0\Tilde{c}_0,m_1\Tilde{c}_1\|^2}.
    \]
    Using similar techniques to those used to bound $M(X,\chi,\chi')$ to bound $\mathcal{E}_j(X,\chi)$ and Lemma \ref{maintermlemma3} to bound $\mathcal{E}'_j(X)$ we obtain
    \[
    \frac{\mathcal{E}_j(X,\chi)}{\tau(r_0)^2},\mathcal{E}'_j(X) \ll \frac{(\log\log X)^{1/2}}{(\Tilde{c}_0\tilde{c}_1)}
    \]
    from which it follows that
    \[
    E(X,\chi') \ll_{C_2,C_3} \frac{\tau(r_0)\tau(r_1)(\log\log X)^{1/2}}{\Tilde{c}_0\Tilde{c}_1}.
    \]
    Finally, we inject the bounds for $M(X,\chi,\chi')$ and $E_j(X,\chi)$ into our overall expression, summing over finitely many characters $\chi$ and $\chi'$ modulo $8$ and noting that $\mathfrak{S}_0(r_i)\ll 1$ for all integers, we conclude the proof.    
\end{proof}

\end{document}